\documentclass[a4paper,11pt,reqno,noindent]{amsart}
\usepackage[centertags]{amsmath}
\usepackage{amsfonts,amssymb,amsthm,dsfont,cases,amscd,esint,enumerate,graphicx}
\usepackage[T1]{fontenc}
\usepackage[english]{babel}
\usepackage[applemac]{inputenc}
\usepackage{newlfont}
\usepackage{color}
\usepackage[body={15cm,21.5cm},centering]{geometry} 
\usepackage{fancyhdr}
\pagestyle{fancy}
\fancyhf{}

\fancyhead[RO,LE]{\footnotesize\thepage}
\fancyhead[LO]{\scriptsize\rightmark}
\fancyhead[RE]{\scriptsize\leftmark}

\setlength{\headheight}{12pt}  
\setlength{\headsep}{25pt} 
\usepackage{enumerate}

\theoremstyle{plain}
\newtheorem{theor1}{Theorem}
\newtheorem{prop1}[theor1]{Proposition}
\newtheorem{cor1}[theor1]{Corollary}
\newtheorem{theor}{Theorem}[section]
\newtheorem{lem}[theor]{Lemma}
\newtheorem{prop}[theor]{Proposition}
\newtheorem{cor}[theor]{Corollary}
\theoremstyle{definition}
\newtheorem{rem}[theor]{Remark}
\theoremstyle{plain}

\newtheorem*{asn*}{\assumptionnumber}
  \providecommand{\assumptionnumber}{}
  \makeatletter
\newenvironment{asn}[2]
   {\renewcommand{\assumptionnumber}{Assumption \!(#1) {\normalfont--- #2}}
    \begin{asn*}
    \protected@edef\@currentlabel{{\normalfont(#1)}}}
   {\end{asn*}}
  \makeatother

\numberwithin{equation}{section}

\newcommand{\N}{\mathbb N}

\newcommand{\e}{\varepsilon}
\newcommand{\Pc}{\mathcal{P}}
\newcommand{\Bc}{\mathcal{B}}
\newcommand{\Ac}{\mathcal{A}}
\newcommand{\Ec}{\mathcal E}
\newcommand{\dist}{\operatorname{dist}}
\newcommand{\R}{\mathbb R}

\newcommand{\Ic}{\mathcal I}
\newcommand{\calF}{\mathcal F}
\newcommand{\Md}{\mathbb M}
\newcommand{\Nc}{\mathcal N}
\newcommand{\cvf}{\rightharpoonup}

\newcommand{\loc}{{\operatorname{loc}}}
\newcommand{\diam}{{\operatorname{diam}}}
\newcommand{\Id}{\operatorname{Id}}
\newcommand{\E}{\mathbb{E}}
\newcommand{\D}{\operatorname{D}}

\newcommand{\bb}{\bar{\boldsymbol b}}
\newcommand{\Bb}{\bar{\boldsymbol B}}
\newcommand{\Ld}{\operatorname{L}}

\newcommand{\Div}{{\operatorname{div}}}
\newcommand{\Sym}{{\operatorname{sym}}}
\newcommand{\Skew}{{\operatorname{skew}}}
\newcommand{\Tr}{{\operatorname{tr}}}
\newcommand{\step}[1]{\noindent \textit{Step} #1.}
\newcommand{\substep}[1]{\noindent \textit{Substep} #1.}
\newcommand{\Pm}{\mathbb{P}}
\newcommand{\expec}[1]{\mathbb{E}\left[ #1 \right]}
\newcommand{\expecm}[1]{\mathbb{E}\big[ #1 \big]}
\newcommand{\expecM}[1]{\mathbb{E}\bigg[ #1 \bigg]}

\usepackage[colorlinks,citecolor=black,urlcolor=black]{hyperref}

\title[Effective viscosity of random suspensions without uniform separation]{Effective viscosity of random suspensions\\without uniform separation}

\author[M. Duerinckx]{Mitia Duerinckx}
\address[Mitia Duerinckx]{Universit\'e Paris-Saclay, CNRS, Laboratoire de Math\'ematiques d'Orsay, 91400~Orsay, France \& Universit\'e Libre de Bruxelles, D\'epartement de Math\'ematique, 1050~Brussels, Belgium}
\email{mitia.duerinckx@u-psud.fr}

\begin{document}
\selectlanguage{english}

\begin{abstract}
This work is devoted to the definition and the analysis of the effective viscosity associated with a random suspension of small rigid particles in a steady Stokes fluid. While previous works on the topic have been conveniently assuming that particles are uniformly separated, we relax this restrictive assumption in form of mild moment bounds on interparticle distances.

\bigskip\noindent
{\sc MSC-class:} 35R60, 76M50, 35Q35, 76D07.
\end{abstract}

\maketitle

\setcounter{tocdepth}{1}
\tableofcontents

\vspace{-1cm}
\section{Introduction}
Consider a colloidal suspension of small rigid particles in a Stokes fluid.
Suspended particles act as obstacles, hindering the fluid flow and thus increasing the viscosity.
In a recent contribution~\cite{DG-19} with Gloria, we show in terms of homogenization theory that the suspension behaves at leading order like a Stokes fluid with some \emph{effective} viscosity, and in~\cite{DG-20b} we establish optimal error estimates. In~\cite{DG-20c}, we analyze the value of this effective viscosity in the low-density regime, in particular establishing the so-called Einstein formula and improving on several recent works on the topic~\cite{Niethammer-Schubert-19,GVH,GVM-20,GV-Hofer-20,GV-20}. In~\cite{DG-20a}, we further investigate the collective sedimentation of suspended particles under gravity.
In all those contributions, a crucial technical assumption is that particles are uniformly separated, which is necessary in various arguments, for instance when appealing to trace estimates and regularity theory at particle boundaries.
This separation assumption is however unsatisfactory from the physical viewpoint, as it is incompatible with the steady-state behavior, e.g.~\cite{BG-72a,BG-72}, and the present contribution aims at relaxing it as much as possible in form of mild inverse moment bounds on interparticle distances.
We focus on the definition of the effective viscosity and on the qualitative homogenization result, and we further provide general tools that can be used to adapt some more advanced results; see e.g.~\cite[Section~2]{DG-20c} and~\cite[Section~5]{GV-Hofer-20} on the validity of Einstein's formula in the low-density regime without uniform separation.

\medskip
In the case of smooth particles with some non-degeneracy condition, we essentially show in 3D that the effective viscosity is well-defined provided that $\expec{\rho^{-1}}<\infty$, where~$\rho$ stands for the distance between two neighboring particles, and we prove qualitative homogenization under the stronger condition $\expec{\rho^{-3/2}}<\infty$.
Although likely optimal in a general stationary ergodic setting, these moment bounds on interparticle distances are still quite restrictive and unphysical, \mbox{cf.~\cite{BG-72a,BG-72}}. We may draw the link with the well-known paradox of absence of solid-solid contacts in a 3D Stokes flow, which is related to flaws in the modeling:
real-life solid particles are slightly elastic, their boundary display some roughness, and no-slip boundary conditions are not exactly valid; see e.g.~\cite{GVH-12} and references therein.
Such corrections are not considered in the present contribution and we rather provide a detailed analysis of the ideal Stokes model.
In~\cite{DG21}, with Gloria, we investigate another line of research: under suitable mixing conditions, large clusters of close particles are unlikely in view of subcritical percolation, which can be exploited to prove homogenization without any condition on interparticle distances.
Finer geometric information might also be used in the spirit of~\cite{Girodroux-GV-21}.

\medskip
Our approach in this contribution is mainly inspired by the work of Jikov~\cite{Jikov-87,Jikov-90} on the homogenization problem for scalar elliptic equations with stiff inclusions; see also~\cite[Section~8.6]{JKO94}. In that scalar setting, however, required moment bounds on interparticle distances are much milder and only logarithmic moments are required in 3D. We emphasize two main differences:
\begin{enumerate}[---]
\item First, and most importantly, the incompressibility constraint in the present Stokes problem brings important rigidity and leads to completely different scalings. This is easily understood by noting that the incompressibility constraint can be eliminated by writing the Stokes equations as {\it fourth-order} elliptic equations on the vector potential; see e.g.~\cite{Francfort-92}.
As in~\cite{GVH-12}, spatial cut-offs in this situation are then naturally to be performed on the vector potential, so that one derivative of cut-off functions is lost with respect to scalar and compressible settings, which explains the different scalings; see the proof of Proposition~\ref{prop:ext-key}.
\smallskip\item Second, the vectorial character of the Stokes problem prohibits the use of scalar truncations: in contrast with e.g.~\cite[Section~8.6]{JKO94}, this forces us to appeal to the Sobolev embedding and further deteriorates the required moment conditions.
\end{enumerate}
We note some similarities with the homogenization problem for elliptic systems with degenerate random coefficients, e.g.~\cite{Chiarini-Deuschel-16,BFO-18,FHS-19,Bella-Schaffner-19}, where similar inverse moment conditions are required on coefficients.

\medskip
Before stating our main results, we close this introduction by recalling the formulation of the Stokes model for a viscous fluid in presence of a random suspension of small rigid particles, e.g.~\cite{DG-19}.
We denote by $d\ge 2$ the space dimension, and we consider a random ensemble of particles $\Ic=\bigcup_nI_n\subset\R^d$.
Stationarity, ergodicity, and regularity assumptions are postponed to Section~\ref{sec:main-res}.
In order to model a dense suspension of small particles, we rescale the random set~$\Ic$ by a small parameter $\e>0$ and consider $\e\Ic=\bigcup_n\e I_n$.
We then view these small particles $\{\e I_n\}_n$ as suspended in a solvent described by the steady Stokes equation: in a reference domain $U\subset\R^d$, given an internal force $f\in \Ld^2(U)^d$, the fluid velocity $u_\e\in H^1(U\setminus\e\Ic)^d$ satisfies
\begin{equation}\label{e.intro1}
-\triangle u_\e+\nabla S_\e=f,\qquad\Div (u_\e)=0,\qquad\text{in $U\setminus\e\Ic$},
\end{equation}
with $u_\e=0$ on $\partial U$.
(We implicitly assume here that no particle intersects the boundary.)
The pressure field is only defined up to an additive constant and we choose $S_\e\in\Ld^1(U\setminus\e\Ic)$ with $\int_{U\setminus\e\Ic}S_\e=0$.
Next, no-slip boundary conditions are imposed at particle boundaries: since particles are constrained to have rigid motions, this amounts to letting the velocity field~$u_\e$ be extended inside particles, $u_\e\in H^1(U)^d$, with the rigidity constraint
\begin{equation}\label{e.intro+2}
\D(u_\e)=0,\qquad\text{in $\e\Ic$},
\end{equation}
where $\D(u_\e)$ stands for the symmetric gradient of $u_\e$. In other words, this condition means that the velocity field $u_\e$ coincides with a rigid motion $x\mapsto V_{\e,n}+\Theta_{\e,n}x$ inside each particle $\e I_n$, for some $V_{\e,n}\in\R^d$ and some skew-symmetric matrix $\Theta_{\e,n}\in\R^{d\times d}$.
Finally, assuming that the particles have the same mass density as the fluid, or in the absence of gravity, buoyancy forces vanish, and
the force and torque balances on each particle take the form
\begingroup\allowdisplaybreaks
\begin{align}
\int_{\e\partial I_n}\sigma(u_\e,S_\e)\nu&=0,\\
\int_{\e\partial I_n}\Theta x\cdot\sigma(u_\e,S_\e)\nu&=0,\quad\text{for all skew-symmetric $\Theta\in\R^{d\times d}$},
\end{align}
\endgroup
where $\sigma(u_\e,S_\e)$ is the Cauchy stress tensor
\begin{equation}\label{eq:Cauchy}
\sigma(u_\e,S_\e)=2\D(u_\e)-S_\e\Id,
\end{equation}
and where $\nu$ stands for the outward unit normal vector at the particle boundaries.
These equations~\eqref{e.intro1}--\eqref{eq:Cauchy} have the following weak formulation,
\[2\int_U\D(g):\D(u_\e)=\int_Ug\cdot f,\qquad\text{$\forall\,g\in C^1_c(U)^d$: $\Div (g)=0$, $\D(g)|_{\e\Ic}=0$.}\]
This Stokes problem can also be viewed as a model for incompressible linear elasticity with stiff inclusions.

\subsection*{Notation}
\begin{enumerate}[\quad$\bullet$]
\item For vector fields $u,u'$ and matrix fields $T,T'$, we set $(\nabla u)_{ij}=\nabla_ju_i$, $\Div(T)=\nabla_jT_{ij}$, $T:T'=T_{ij}T'_{ij}$, $(u\otimes u')_{ij}=u_iu'_j$, where we systematically use Einstein's summation convention on repeated indices. For a matrix $E$, we write {$\nabla_Eu=E:\nabla u$}.
\smallskip\item For a velocity field~$u$ and pressure field $S$, we denote by $(\D(u))_{ij}=\frac12(\nabla_ju_i+\nabla_iu_j)$ the symmetric gradient and by $\sigma(u,S)=2\D(u)-S\Id$ the Cauchy stress tensor. At particle boundaries, we let $\nu$ denote the outward unit normal vector.
\smallskip\item We denote by $\Md^\Sym\subset\R^{d\times d}$ the subset of symmetric matrices, by $\Md_0^\Sym$ the subset of symmetric trace-free matrices, and by $\Md^\Skew$ the subset of skew-symmetric matrices.
We also write $\Ld^p(\R^d)^{d\times d}_\Sym=\Ld^p(\R^d;\Md^\Sym)$.
\smallskip\item We denote by $C\ge1$ any constant than only depends on the dimension $d$, on the reference domain~$U$, and on the parameters appearing in the different assumptions (in particular on~$\delta$ in~\ref{Hd}--\ref{Hd'} below). The value of the constant $C$ is allowed to change from one line to another.
We use the notation $\lesssim$ (resp.~$\gtrsim$) for $\le C\times$ (resp.~$\ge\frac1C\times$) up to such a multiplicative constant $C$. We add subscripts to $C,\lesssim,\gtrsim$ to indicate dependence on other parameters.
\smallskip\item The ball centered at $x$ of radius $r$ in $\R^d$ is denoted by $B_r(x)$, and we simply write $B(x)=B_1(x)$, $B_r=B_r(0)$, and $B=B_1(0)$.
\end{enumerate}

\medskip
\section{Main results}\label{sec:main-res}

We focus on the case $d>2$ for the statement of the main results, while the 2D case has some important difference and is briefly discussed in Remark~\ref{rem:2Dcase}.

\subsection{Assumptions}

We start with the construction and suitable assumptions on the random ensemble of particles.
Given an underlying probability space~$(\Omega,\Ac,\Pm)$,
let $\Pc=\{x_n\}_n$ be a random point process on $\R^d$, with a given enumeration, consider a collection of random shapes $\{I_n^\circ\}_n$, where each~$I_n^\circ$ is a connected random Borel subset of the unit ball~$B$,\footnote{Letting $\Bc(\R^d)$ denote the Borel $\sigma$-algebra on $\R^d$, we recall that a map $I^\circ:\Omega\to\Bc(\R^d):\omega\mapsto I^\circ(\omega)$ is a {\it random Borel subset} of $\R^d$ if the set $\{(\omega,x):\omega\in\Omega,x\in I^\circ(\omega)\}$ belongs to the product $\sigma$-algebra~$\Ac\times\Bc(\R^d)$, or alternatively if the indicator function $\mathds1_{I^\circ}$ is $(\Ac\times\Bc(\R^d))$-measurable on $\Omega\times\R^d$.} and define the corresponding random inclusions $I_n:=x_n+I_n^\circ$. We then consider the random set $\Ic:=\bigcup_nI_n$, which is assumed to satisfy the following general conditions, for some deterministic constant $\delta>0$.

\begin{asn}{H$_{\delta}^\circ$}{General conditions}\label{Hd}$ $
\begin{enumerate}[\quad$\bullet$]
\item \emph{Stationarity and ergodicity:}
The point process $\Pc=\{x_n\}_n$ and the associated random set $\Ic$ are stationary and ergodic.\footnote{Stationarity means that the laws of the translated point process $x+\Pc$ and of the translated random Borel set $x+\Ic$ do not depend of the shift $x\in\R^d$. Ergodicity then means that, if a measurable function of~$\Pc$ or~$\Ic$ is almost surely unchanged when $\Pc$ or~$\Ic$ is replaced by $x+\Pc$ or $x+\Ic$ for any $x\in\R^d$, then the function is almost surely constant.}
\smallskip\item \emph{Uniform $C^2$ regularity:}
Random shapes $\{I_n^\circ\}_n$ almost surely satisfy interior and exterior ball conditions with radius~$\delta$.
\smallskip\item \emph{Hardcore condition:}
There holds $\overline I_n\cap\overline I_m=\varnothing$ almost surely for all $n\ne m$.
\qedhere
\end{enumerate}
\end{asn}

When particles are close, not only their distance matters, but also the order of their quasi-contact.
We therefore need to refine the above hardcore condition,
and we focus on the case of smooth particles with uniformly non-osculating boundaries. This is expressed below in form of some ``parabolic'' version of a cone condition. While always satisfied in case of spherical particles, this excludes for instance the case of particles that would almost touch on flat components, as it would correspond to a contact of infinite order; see Figures~\ref{fig1}--\ref{fig2} below.
Note that our analysis is easily adapted to intermediate situations with contacts of any fixed order: this would lead to stronger moment conditions on interparticle distances and is not pursued here.

\medskip
Before we actually state relevant geometric conditions, we need to introduce some further notation.
First, we construct neighborhoods $\{I_n^+\}_n$ of the particles $\{I_n\}_n$ in form of truncated Voronoi cells,
\begin{equation}\label{eq:def-In+}
I_n^+\,:=\,(I_n+ B_\delta)\cap\Big\{x\in\R^d:\dist(x,I_n)<\inf_{m:m\ne n}\dist(x,I_m)\Big\}.
\end{equation}
In view of the uniform $C^2$ regularity of the particles, cf.~\ref{Hd}, it is easily checked that these neighborhoods $\{I_n^+\}_n$ are uniformly Lipschitz (with Lipschitz constant bounded by~$C/\delta$).
Next, we define ``model'' parabolic domains that are enclosed by close paraboloids with different radii: given a distance $\rho\ge0$ and radii $a_2>a_1>0$, we set
\begin{eqnarray}
\Gamma_{a_1,a_2}^+(\rho)&:=&B_\delta\cap\big\{(x_1,x')\in\R\times\R^{d-1}:-\rho+\tfrac1{a_2}|x'|^2<x_1<\tfrac1{a_1}|x'|^2\big\},\nonumber\\
\Gamma_{a_1,a_2}^-(\rho)&:=&B_\delta\cap\big\{(x_1,x')\in\R\times\R^{d-1}:-\rho-\tfrac1{a_1}|x'|^2<x_1<-\tfrac1{a_2}|x'|^2\big\},\label{eq:def-Gamma}
\end{eqnarray}
In these terms, we formulate the following geometric condition, for some deterministic constant $\delta>0$. It states that such parabolic domains can be included in the interparticle spacing $I_n^+\setminus I_n$ in the neighborhood of quasi-contact points, and the condition $\frac1{a_1}-\frac1{a_2}\ge\delta$ means that paraboloids can be chosen to be $\delta$-uniformly not osculating; see Figures~\ref{fig1}--\ref{fig2}.

\begin{asn}{H$_{\delta}'$}{Uniform non-degeneracy of contact points}\label{Hd'}$ $\\
%There exists a deterministic constant $\delta>0$ such that the following holds:
For all $n$, for all $x\in\partial I_n$, there exists $0\le\rho\le\delta$, there exist radii $a_2>a_1\ge\delta$ with~\mbox{$\frac1{a_1}-\frac1{a_2}\ge\delta$}, and there exists a rotation $Q\in O(d)$, such that the rotated parabolic domain $x+Q\Gamma_{a_1,a_2}^+(\rho)$ or $x+Q\Gamma_{a_1,a_2}^-(\rho)$ is contained in $I_n^+\setminus I_n$.
\end{asn}

\begin{figure}
\begin{center}
\includegraphics[scale=1.5]{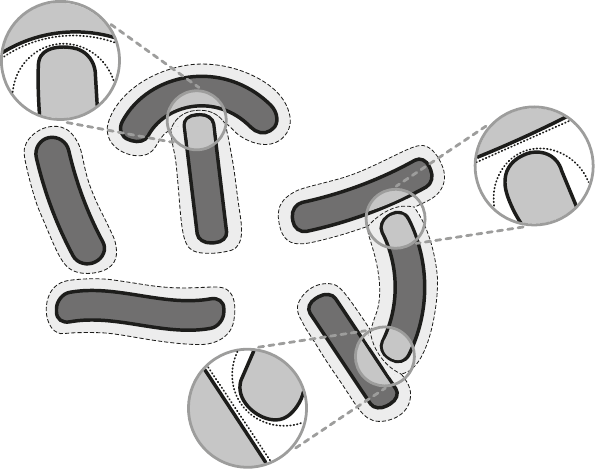}
\caption{\label{fig1} This displays a configuration with close particles satisfying Assumption~\ref{Hd'}. Disjoint neighborhoods~$\{I_n^+\}_n$ are represented as light gray areas around the particles. The zooms on the neighborhoods of quasi-contact points show that particle boundaries are not osculating, as prescribed by Assumption~\ref{Hd'}, with parabolic domains delimited by dotted lines.}
\end{center}
\begin{center}
\includegraphics[scale=1.5]{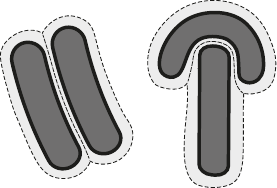}
\caption{\label{fig2} This displays examples of configurations of close particles that are forbidden by Assumption~\ref{Hd'} as their boundaries are osculating to infinite order.}
\end{center}
\end{figure}

Finally, we turn to assumptions on interparticle distances.
For all $n$, the (half) interparticle distance from $I_n$ is given by
\begin{equation}\label{eq:def-rn0}
\rho_n^\circ\,:=\,\min_{m:m\ne n}\tfrac12\dist(I_n,I_m).
\end{equation}
While previous works on the Stokes model~\eqref{e.intro1}--\eqref{eq:Cauchy} have focused on the convenient case of uniformly separated particles, that is, $\inf_n\rho_n^\circ>0$,
the present contribution aims at showing that this can be substantially weakened in form of mild inverse moment bounds.
For that purpose, under Assumption~\ref{Hd'}, we first need to introduce a better suited notion of interparticle distance $\rho_n\le\rho_n^\circ$:
for all $x\in\partial I_n$, we let~$\rho_n(x)$ denote the supremum of the admissible choices of~$\rho$ in~\ref{Hd'}, and we then define
\begin{equation}\label{eq:def-rhon}
\rho_n\,:=\,\inf_{x\in\partial I_n}\rho_n(x).
\end{equation}

\subsection{Construction of correctors}
We start with the definition of correctors for the Stokes problem~\eqref{e.intro1}--\eqref{eq:Cauchy}, thus adapting~\cite[Proposition~2.1]{DG-19} to the present setting without uniform particle separation.
The proof relies on the construction of a suitable admissible test function for the variational problem~\eqref{eq:var-psi} below, and we believe that the moment condition~\eqref{eq:mom-0} is optimal in general.
As is shown in the proof, existence and uniqueness of the corrector $\psi_E$ also hold under~\eqref{eq:mom-0} with $\eta=0$, but existence of a stationary pressure field is based on a weak compactness argument in~$\Ld^{1+}(\Omega)$ and therefore requires $\eta>0$.
Contacts between particles are allowed in dimension $d>5$ as no moment condition is required in that case.

\begin{theor1}[Correctors]\label{prop:cor}
Let $d>2$. On top of Assumptions~\ref{Hd} and~\ref{Hd'}, assume that interparticle distances $\{\rho_n\}_n$, cf.~\eqref{eq:def-rhon}, satisfy the following moment condition, for some~$\eta>0$,
\begin{equation}\label{eq:mom-0}
\begin{array}{lllll}
\text{for $d<5$}&:&\quad\textstyle\sum_n\expecm{\rho_n^{-\frac{5-d}2-\eta}\,\mathds1_{0\in I_n}}&<&\infty,\\
\vspace{-0.3cm}\\
\text{for $d=5$}&:&\quad\textstyle\sum_n\expecm{|\!\log\rho_n|^{1+\eta}\,\mathds1_{0\in I_n}}&<&\infty,
\end{array}
\end{equation}
while no moment condition is required in dimension $d>5$.
Then, for all $E\in\Md_0^\Sym$, there exists a unique minimizer $\D(\psi_E)$ of the variational problem
\begin{multline}\label{eq:var-psi}
\inf\Big\{\,\expec{|\!\D(\psi)+E|^2}~:~\psi\in\Ld^2(\Omega;H^1_\loc(\R^d)^{d}),~\text{$\nabla\psi$ stationary},\\
~\Div(\psi)=0,~(\D(\psi)+E)|_\Ic=0,~\expec{\D(\psi)}=0\,\Big\},
\end{multline}
and the minimum value defines a positive-definite symmetric linear map $\Bb$ on $\Md_0^\Sym$, which is the so-called effective viscosity,
\begin{equation}\label{eq:def-Bb}
E:\Bb E\,:=\,\expec{|\!\D(\psi_E)+E|^2}.
\end{equation}
Moreover, the minimizer $\D(\psi_E)$ can be characterized by the following PDE:
there exist a unique random vector field $\psi_E\in\Ld^2(\Omega;H^1_\loc(\R^d)^d)$, with anchoring $\int_B\psi_E=0$, and a unique associated pressure field $\Sigma_E\in\Ld^1(\Omega;\Ld^1_\loc(\R^d\setminus\Ic))$, such that
\begin{enumerate}[\quad$\bullet$]
\item the following equations are almost surely satisfied in the strong sense,
\begin{equation}\label{eq:cor}
\left\{\begin{array}{ll}
-\triangle\psi_E+\nabla\Sigma_E=0,&\text{in $\R^d\setminus\Ic$},\\
\Div(\psi_E)=0,&\text{in $\R^d$},\\
\D(\psi_E+Ex)=0,&\text{in $\Ic$},\\
\fint_{\partial I_n}\sigma(\psi_E+Ex,\Sigma_E)\nu=0,&\forall n,\\
\fint_{\partial I_n}\Theta(x-x_n)\cdot\sigma(\psi_E+Ex,\Sigma_E)\nu=0,&\forall n,\,\forall\Theta\in\Md^\Skew,
\end{array}\right.
\end{equation}
\item $\nabla\psi_E$ and $\Sigma_E\mathds1_{\R^d\setminus\Ic}$ are stationary, with the following estimates, for some $\eta>0$,
\[\begin{array}{rlllrrll}
\expecm{|\nabla\psi_E|^2}&\lesssim&|E|^2,
&\quad&\expecm{\nabla\psi_E}&=&0,\\
&&&&&&\vspace{-0.4cm}
\\
\expecm{|\Sigma_E|^{1+\eta}\mathds1_{\R^d\setminus\Ic}}&\lesssim&|E|^{1+\eta},
&\quad&\expecm{\Sigma_E\mathds1_{\R^d\setminus\Ic}}&=&0.
\end{array}\]
\end{enumerate}
In particular, the following convergences hold almost surely as $\e\downarrow0$,
\begin{equation}\label{eq:conv-cor}
\begin{array}{rllll}
(\nabla\psi_E)(\tfrac\cdot\e)&\cvf&0,&\quad\text{weakly} &\text{in $\Ld^2_\loc(\R^d)$},\\
(\Sigma_E\mathds1_{\R^d\setminus\Ic})(\tfrac\cdot\e)&\cvf&0,&\quad\text{weakly} &\text{in $\Ld^{1+\eta}_\loc(\R^d)$},\\
\e\psi_E(\tfrac\cdot\e)&\to&0,&\quad\text{strongly} &\text{in $\Ld^q_\loc(\R^d)$,~~for all $q<\frac{2d}{d-2}$.}
\end{array}
\qedhere
\end{equation}
\end{theor1}

In contrast with the case of uniformly separated particles, cf.~\cite[Proposition~2.1]{DG-19}, we emphasize that under the moment condition~\eqref{eq:mom-0} the pressure field $\Sigma_E$ above is only defined in $\Ld^{1+\eta}(\Omega)$ for some $\eta>0$, and not in~$\Ld^2(\Omega)$.
Improving on this integrability naturally requires a stronger moment condition, as shown in the following.

\begin{prop1}[Integrability of the pressure]\label{prop:Sig-int}
Let $d>2$ and let $\gamma:=\frac{2d(d+1)}{d^2+5d-2}$ for abbreviation. On top of Assumptions~\ref{Hd} and~\ref{Hd'}, given~$1<\alpha<2$, let one of the following conditions hold for interparticle distances~$\{\rho_n\}_n$:
\begin{enumerate}[\quad$\bullet$]
\item in case $\alpha\le\frac{d}{d-1}$ with $d\le5$, assume that
\begin{equation*}
\begin{array}{lllll}
\text{for $d<5$}&:&\quad\textstyle\sum_n\expecm{\rho_n^{-\frac{\alpha}{2-\alpha}\frac{5-d}2}\,\mathds1_{0\in I_n}}&<&\infty,\\
\vspace{-0.3cm}\\
\text{for $d=5$}&:&\quad\textstyle\sum_n\expecm{|\!\log\rho_n|^{\frac{\alpha}{2-\alpha}}\,\mathds1_{0\in I_n}}&<&\infty,
\end{array}
\end{equation*}
\item in case $\alpha<\gamma$ with $d>5$, no moment condition is required;
\smallskip\item in case $\frac{d}{d-1}\vee\gamma<\alpha<2$, with $\alpha\ne\frac{d}{d-2}$, assume that
\begin{equation*}
\textstyle\sum_n\expecm{\rho_n^{-\frac\alpha{2-\alpha}(\frac1\gamma-\frac1\alpha)(d+1)}\,\mathds1_{0\in I_n}}
\,<\,\infty.
\end{equation*}
\end{enumerate}
Then for all $E\in\Md_0^\Sym$ the pressure field $\Sigma_E$ constructed in Theorem~\ref{prop:cor} satisfies
\[\expecm{|\Sigma_E|^\alpha\mathds1_{\R^d\setminus\Ic}}\,\lesssim\,|E|^\alpha,\]
and there holds almost surely $(\Sigma_E\mathds1_{\R^d\setminus\Ic})(\tfrac\cdot\e)\cvf0$ weakly in $\Ld^\alpha_\loc(\R^d)$ as $\e\downarrow0$.
\end{prop1}

\subsection{Homogenization result}
We turn to the homogenization result for the Stokes problem~\mbox{\eqref{e.intro1}--\eqref{eq:Cauchy}}.
For that purpose, we first define admissible random ensembles of particles in a given bounded Lipschitz domain $U\subset\R^d$: the proof indeed requires to control the distance of particles to the boundary $\partial U$ similarly as interparticle distances.
We let~$\Nc_\e(U)\subset\N$ denote a random subset of indices such that
\[\big\{n:I_n\subset \tfrac1\e U,\,\dist(I_n,\partial \tfrac1\e U)\ge\delta\big\}\,\subset\,\Nc_\e(U)\,\subset\,\big\{n:I_n\subset\tfrac1\e U\big\},\]
and we define the associated random ensemble of particles in $U$,
\begin{equation}\label{eq:def-Ieps}
\Ic_\e(U)\,:=\,\bigcup_{n\in\Nc_\e(U)}\e I_n.
\end{equation}
In this setting, we consider corresponding neighborhoods $\{I_{n;U,\e}^+\}_n$ of the particles $\{I_n\}_n$,
\begin{equation*}
I_{n;U,\e}^+:=I_n^+\cap\tfrac1\e U,
\end{equation*}
we assume that Assumption~\ref{Hd'} holds with neighborhoods $\{I_n^+\}_n$ replaced by $\{I_{n;U,\e}^+\}_n$,
and we define the corresponding distances $\{\rho_{n;U,\e}\}_n$ as in~\eqref{eq:def-rhon}.

\medskip
With this notation, we may now formulate the homogenization result for~\mbox{\eqref{e.intro1}--\eqref{eq:Cauchy}}.
The proof is based on a div-curl argument together with an extension result for fluxes as inspired by the work of Jikov~\cite{Jikov-87,Jikov-90}. Due to non-uniform particle separation, extended fluxes are only controlled in $\Ld^\alpha$ for some integrability $\alpha<2$ depending on the moment condition on interparticle distances; see~Theorem~\ref{th:extension}. In view of the Sobolev embedding, Jikov's div-curl argument can then be performed provided $\alpha\ge\frac{2d}{d+2}$. This restriction leads to the moment condition~\eqref{eq:mom-2} below, which is expected to be optimal in general and coincides with the one in Proposition~\ref{prop:Sig-int} with $\alpha=\frac{2d}{d+2}$. We emphasize that this condition becomes more stringent in large dimension as the Sobolev exponent $\frac{2d}{d+2}$ increases to $2$.
Not surprisingly, the condition is stronger than the one for the existence of the corrector in Theorem~\ref{prop:cor} since defining correctors only requires to construct an admissible test function for the variational problem~\eqref{eq:var-psi}.

\begin{theor1}[Homogenization result]\label{th:main-qsh}
Let $d>2$. On top of Assumption~\ref{Hd}, given a bounded Lipschitz domain $U\subset\R^d$, let Assumption~\ref{Hd'} hold for $\{I_{n;U,\e}^+\}_n$, and assume that interparticle distances $\{\rho_{n;U,\e}\}_n$ satisfy almost surely
\begin{equation}\label{eq:mom-2}
\begin{array}{lllll}
\text{for }d=3&:&~~
\limsup_{\e\downarrow0}\,\e^d\sum_{n\in\Nc_\e(U)}({\rho_{n;U,\e}})^{-\frac32}\,&\hspace{-0.2cm}<\,\infty,\quad&\\
\vspace{-0.2cm}\\
\text{for }d\ge4&:&~~
\limsup_{\e\downarrow0}\,\e^d\sum_{n\in\Nc_\e(U)}({\rho_{n;U,\e}})^{-(\frac d2-1)}
\,&\hspace{-0.2cm}<\,\infty,\quad&
\end{array}
\end{equation}
where in case $d=6$ the exponent $\frac d2-1=2$ must be replaced by some exponent $>2$.
Denote by $\lambda:=\expec{\mathds1_\Ic}$ the volume fraction of the suspension,
let $\psi,\Sigma,\Bb$ be defined as in Theorem~\ref{prop:cor},
and define the following effective constant $\bb\in\Md_0^\Sym$: for all~$E\in\Md_0^\Sym$,
\begin{equation}\label{eq:def-bb}
\bb:E\,:=\,\frac1d\,\expecM{\sum_n\frac{\mathds1_{I_n}}{|I_n|}\int_{\partial I_n}(x-x_n)\cdot\sigma(\psi_E+Ex,\Sigma_E)\nu}.
\end{equation}
Given an internal force $f\in\Ld^2(U)^d$, let the velocity field $u_\e\in \Ld^2(\Omega;H^1_0(U)^d)$ and the associated pressure field $S_\e\in\Ld^1(\Omega;\Ld^1(U\setminus\Ic_\e(U)))$, with anchoring \mbox{$\int_{U\setminus\Ic_\e(U)}S_\e=0$}, be almost surely the unique solutions of the Stokes problem~\eqref{e.intro1}--\eqref{eq:Cauchy}, that~is,
\begin{equation}\label{eq:st-het}
\left\{\begin{array}{ll}
-\triangle u_\e+\nabla S_\e=f,&\text{in $U\setminus\Ic_\e(U)$},\\
\Div (u_\e)=0,&\text{in $U$},\\
\D(u_\e)=0,&\text{in $\Ic_\e(U)$},\\
\int_{\e\partial I_n}\sigma(u_\e,S_\e)\nu=0,&\forall n,\\
\int_{\e\partial I_n}\Theta(x-\e x_n)\cdot\sigma(u_\e,S_\e)\nu=0,&\forall n,\,\forall\Theta\in\Md^\Skew.
\end{array}\right.
\end{equation}
Then we have almost surely, as $\e\downarrow0$,
\[\begin{array}{rlll}
u_\e-\bar u&\cvf&0,&\text{weakly in $H^1_0(U)$},\\
(S_\e-\bar S-\bb:\D(\bar u))\mathds1_{U\setminus\Ic_\e(U)}&\cvf&0,&\text{weakly in $\Ld^\frac{2d}{d+2}(U)$},
\end{array}\]
where the limiting velocity field $\bar u\in H^1_0(U)^d$ and the associated pressure field $\bar S\in\Ld^2(U)$, with anchoring $\int_U\bar S=0$, are the unique solutions of the following homogenized equation,
\begin{equation}\label{eq:st-hom}
\left\{\begin{array}{ll}
-\Div(2\Bb\D(\bar u))+\nabla\bar S=(1-\lambda)f,&\text{in $U$},\\
\Div (\bar u)=0,&\text{in $U$}.
\end{array}\right.
\end{equation}
In addition, provided that $f\in\Ld^p(U)^d$ for some $p>d$, the following corrector results hold almost surely, as $\e\downarrow0$,
\begin{eqnarray}
\Big\|u_\e-\bar u-\sum_{E\in\Ec}\e\psi_E(\tfrac\cdot\e)\nabla_E\bar u\Big\|_{H^1(U)}&\to&0,\nonumber\\
\inf_{c\in\R}\Big\|S_\e-\bar S-\bb:\D(\bar u)-\sum_{E\in\Ec}\Sigma_E(\tfrac\cdot\e)\nabla_E\bar u-c\Big\|_{\Ld^\frac{2d}{d+2}(U\setminus\e\Ic)}&\to&0,\label{eq:conv2sc}
\end{eqnarray}
where $\Ec$ stands for an orthonormal basis of $\Md_0^\Sym$.
\end{theor1}

\subsection{Further technical tools}
On top of the definition of the effective viscosity and the above qualitative homogenization result,
we wish to further extend more advanced results such as the validity of Einstein's formula for the effective viscosity at low density~\cite{DG-20c,GV-Hofer-20}, optimal quantitative error estimates for homogenization~\cite{DG-20b}, and the analysis of sedimentation~\cite{DG-20a}.
To this aim, we provide a couple of technical tools for the analysis of suspensions without uniform separation. These tools are used in~\cite[Section~2]{DG-20c} and~\cite[Section~5]{GV-Hofer-20} for the validity of Einstein's formula.

\medskip
We start with the following extension result for fluxes in presence of rigid particles, which constitutes the main technical tool in our proof of Theorem~\ref{th:main-qsh}.
Starting from a notion of flux~$q$ that accounts for the behavior outside rigid particles, we construct an extension~$\tilde q$ that is defined nontrivially inside the particles in such a way that the continuity equation holds globally, cf.~\eqref{eq:cond-ext-todo0}.
For that purpose, one views the suspension of rigid particles as the limit of a suspension of droplets with diverging shear viscosity, and extended fluxes are then naturally defined as limits of corresponding fluxes; see~Remark~\ref{rem:extension}.
This construction is inspired by a corresponding scalar result by Jikov~\cite{Jikov-87,Jikov-90} in the context of scalar elliptic equations with stiff inclusions (see also~\cite[Section~3.5]{JKO94}), but additional care is needed here to deal with the incompressibility constraint.

\begin{theor1}[Extension of fluxes]\label{th:extension}
Let $d>2$, let Assumptions~\ref{Hd} and~\ref{Hd'} hold, and let a realization of the random set $\Ic$ be fixed.
Given $\beta\in(1,\infty)$ and $f\in\Ld^\beta_\loc(\R^d)^d$, let~$q\in \Ld^\beta_\loc(\R^d)^{d\times d}_\Sym$ with $\Tr (q)=0$ satisfy
\begin{equation}\label{eq:cond-ext}
2\int_{\R^d}\D(g):q=\int_{\R^d}g\cdot f,\qquad\text{$\forall\,g\in C^1_c(\R^d)^d$: $\Div (g)=0$, $\D(g)|_{\Ic}=0$.}
\end{equation}
Then, for all $\alpha,r$ chosen as follows,
\begin{equation}\label{eq:ch-param}
\begin{array}{lllll}
&&r&\ge&\frac\beta{\beta-1},\\
1&<&\alpha&\le&\beta\wedge\frac{dr\beta}{r(d-\beta)+d\beta},
\end{array}
\qquad\quad\text{with}\quad\left\{\begin{array}{ll}
r<\frac{d\beta}{\beta-d},&\text{if $\beta>d$},\\
r\ne\frac{d\beta}{d\beta-d-\beta},&\text{if $\beta>\frac{d}{d-1}$},\\
\alpha<\frac{d}{d-1},&\text{if $r=\frac{\beta}{\beta-1}$},
\end{array}\right.
\end{equation}
there exists an extension $\tilde q\in\Ld^\alpha_\loc(\R^d)^{d\times d}_\Sym$ with $\Tr(\tilde q)=0$, as well as an associated pressure field $\tilde S\in\Ld^\alpha_\loc(\R^d)$, such that
\begin{equation}\label{eq:cond-ext-todo0}
\tilde q|_{\R^d\setminus\Ic}=q|_{\R^d\setminus\Ic},\qquad\text{and}\qquad-\Div(2\tilde q-\tilde S\Id)=f,\quad\text{in $\R^d$},
\end{equation}
and such that the following estimate holds, for all $R\ge1$,
\begin{equation}\label{eq:ext-est-res}
\|\tilde q\|_{\Ld^\alpha(B_R)}+\|\tilde S-\textstyle\fint_{B_R}\tilde S\|_{\Ld^{\alpha}(B_R)}
\,\lesssim_{\alpha,\beta,r}\,\Lambda(B_R;r,\tfrac{\beta\alpha}{\beta-\alpha})\Big(\|f\|_{\Ld^\frac{d\beta}{d+\beta}(\widehat B_R)}+\|q\|_{\Ld^\beta(\widehat B_R\setminus\Ic)}\Big),
\end{equation}
where we have set $\widehat B_R:=B_R\cup\bigcup_{n:I_n\cap B_R\ne\varnothing}I_n^+$ and
\begin{equation}\label{eq:lambda}
\Lambda(D;r,p)\,:=\,\Big(|D|+\sum_{n:I_n\cap D\ne\varnothing}\mu_{r}(\rho_n)^p\Big)^\frac1p,
\end{equation}
in terms of
\begin{equation}\label{eq:def-mu}
\mu_{r}(\rho)\,:=\,\left\{\begin{array}{lll}
\rho^{\frac{d+1}{2r}-\frac32}&:&r>\frac{d+1}3,\\
|\!\log \rho|^\frac1r&:&r=\frac{d+1}3,\\
1&:&r<\frac{d+1}3.
\end{array}\right.\qedhere
\end{equation}
\end{theor1}

As applications of this extension result, we establish a trace estimate at particle boundaries and a version of Caccioppoli's inequality.

\begin{cor1}[Trace estimate]\label{lem:trace}
Let $d>2$, let Assumptions~\ref{Hd} and~\ref{Hd'} hold, and let a realization of the random set $\Ic$ be fixed.
Let the velocity field $u\in H^1_\loc(\R^d)^d$ and the associated pressure field $S\in\Ld^1_\loc(\R^d\setminus\Ic)$ satisfy the homogeneous Stokes problem
\begin{equation}\label{eq:uS}
\left\{\begin{array}{ll}
-\triangle u+\nabla S=0,&\text{in $\R^d\setminus\Ic$},\\
\Div(u)=0,&\text{in $\R^d$},\\
\D(u)=0,&\text{in $\Ic$},\\
\int_{\partial I_n}\sigma(u,S)\nu=0,&\forall n,\\
\int_{\partial I_n}\Theta(x-x_n)\cdot\sigma(u,S)\nu=0,&\forall n,\,\forall\Theta\in\Md^\Skew.
\end{array}\right.
\end{equation}
Then for all $n$ and $g\in W^{1,\infty}(I_n^+)^d$ we have for all $\eta>0$,
\begin{align*}
&\Big|\int_{\partial I_n}g\cdot\sigma(u,S)\nu\Big|~\lesssim_\eta~\|g\|_{W^{1,\infty}(I_n^+\setminus I_n)}\,\Big(\int_{I_n^+\setminus I_n}|\!\D(u)|^2\Big)^\frac12\\
&\hspace{7cm}\times\bigg\{\begin{array}{lll}
\rho_n^{\frac1{4d}(d+1)(d+2)-\frac52-\eta}&:&d\le6,\\
1&:&d>6.
\end{array}\qedhere
\end{align*}
\end{cor1}

\begin{cor1}[Caccioppoli's inequality]\label{lem:cacc}
Let $d>2$, let Assumptions~\ref{Hd} and~\ref{Hd'} hold, and let a realization of the random set $\Ic$ be fixed.
Then, for all $\eta>0$, there exists $s<\frac{2d}{d-2}$ such that any solution $(u,S)$ of the homogeneous Stokes problem~\eqref{eq:uS} satisfies, for all $R\ge5$ and~$K\ge1$,
\begin{align*}
&\hspace{-0.3cm}\Big(\fint_{B_R}|\nabla u|^2\Big)^\frac12\\
&\quad\,\lesssim_{s,\eta}\,\bigg(KR^{-1}\Big(\fint_{B_{2R}}\Big|u-\fint_{B_{2R}}u\Big|^s\Big)^\frac1s+\big(K^{-1}+R^{-\frac d2(\frac1s-\frac{d-2}{2d})}\big)\Big(\fint_{B_{2R}}|\nabla u|^2\Big)^\frac12\bigg)\\
&\hspace{4cm}\times\left\{\begin{array}{lll}
1+R^{-d}\sum_{n:I_n\cap B_{2R}\ne\varnothing}\rho_n^{\frac1{4}(d+1)(d+2)-\frac52d-\eta}&:&d\le5,\\
1+R^{-d}\sum_{n:I_n\cap B_{2R}\ne\varnothing}\rho_n^{1-\frac d2-\eta}&:&d>5.
\end{array}\right.\qedhere
\end{align*}
\end{cor1}

\medskip
\section{Extension of fluxes}

This section is devoted to the proof of Theorem~\ref{th:extension}.
The argument relies on the following local extension result for incompressible fields, which is of independent interest.

\begin{prop}\label{prop:ext-key}
Let $d>2$, let Assumptions~\ref{Hd} and~\ref{Hd'} hold, and let a realization of the random set $\Ic$ be fixed.
Let $1<s\le r<\infty$, with $r\ne\frac{ds}{d-s}$ if~$s<d$, and with $r<\frac{ds}{d+s-ds}$ if $s<\frac{d}{d-1}$. Then, for all $n$, there exists an extension operator $P_n$ such that 
for all~$g\in C^1_b(I_n)^d$ with $\Div(g)=0$ the extension $P_ng\in W^{1,s}_0(I_n^+)^d$ satisfies
\begin{equation}\label{eq:ext-prop}
\D(P_ng)|_{I_n}=\D(g),\qquad\text{and}\qquad\Div(P_ng)=0,\quad\text{in $I_n^+$},
\end{equation}
and for all $p\ge s\vee\frac{drs}{d(r-s)+rs}$, with $p>d$ if $r=s$,
\begin{align}\label{eq:final-est-trunc}
\|\nabla P_ng\|_{\Ld^s(I_n^+)}\,\lesssim_{p,r,s}\,\mu_{r}(\rho_n)\,\|\!\D(g)\|_{\Ld^p(I_n)},
\end{align}
where we recall the notation~\eqref{eq:def-mu} for $\mu_{r}$.
\end{prop}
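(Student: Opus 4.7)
The strategy is a vector-potential-cum-cutoff argument of the type anticipated in the introduction. Since $g$ is divergence-free on $I_n$, the plan is to write $g=\Div T$ for an antisymmetric $2$-tensor field $T$ and observe that for any scalar cutoff $\chi$ the tensor $\chi T$ is still antisymmetric, so that $\Div\Div(\chi T)=\partial_i\partial_j(\chi T_{ij})=0$ automatically by symmetry of second partials and antisymmetry of $T$. I would therefore set $P_n g:=\Div(\chi T)=\chi g+T\cdot\nabla\chi$ for a cutoff $\chi$ with $\chi\equiv 1$ on $I_n$ and $\chi\equiv 0$ near $\partial I_n^+$. All three required properties then follow at once: $\Div P_n g=0$ is automatic, $\D(P_n g)|_{I_n}=\D(g)$ since $\chi=1$ and $\nabla\chi=0$ there, and $P_n g\in W^{1,s}_0(I_n^+)^d$ since $\chi$ vanishes near $\partial I_n^+$.

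For the vector potential, I would first invoke Korn's inequality on the uniformly $C^2$ domain $I_n$ to replace $g$ by $g-m$ for some rigid motion $m$, ensuring $\|\nabla g\|_{\Ld^p(I_n)}\lesssim\|\D(g)\|_{\Ld^p(I_n)}$. Then I would extend $g$ across $\partial I_n$ to a divergence-free field $\bar g$ on a fixed bounded neighborhood of $I_n$ via Bogovski\u{\i}'s operator, and define $T_{ij}:=\partial_i v_j-\partial_j v_i$ with $v$ solving $-\triangle v=\bar g$ and $\Div v=0$. Standard Calder\'on--Zygmund theory yields $\|\nabla T\|_{\Ld^p}\lesssim\|g\|_{\Ld^p(I_n)}$, and Sobolev embedding gives $\|T\|_{\Ld^{p^*}}\lesssim\|g\|_{\Ld^p(I_n)}$ with $1/p^*=1/p-1/d$.

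The construction of $\chi$ is the heart of the matter. I would split $\partial I_n^+$ into a bulk part, at distance $\sim\delta$ from neighboring particles, where a standard smooth cutoff has $|\nabla^k\chi|\lesssim 1$, and a contact part near each quasi-contact point $x_0\in\partial I_n$, where Assumption~\ref{Hd'} supplies a parabolic domain $x_0+Q\Gamma_{a_1,a_2}^{\pm}(\rho)$ inside $I_n^+\setminus I_n$ with $\rho\ge\rho_n$ and $c:=\tfrac1{a_1}-\tfrac1{a_2}\ge\delta$. In the local parabolic coordinates, I would take $\chi$ to interpolate across the thickness $h(x'):=\rho+c|x'|^2\gtrsim\rho+|x'|^2$ between the particle boundary (where $\chi=1$) and the bisector with the neighbor (where $\chi=0$), obtaining $|\nabla\chi|\lesssim h^{-1}$ and $|\nabla^2\chi|\lesssim h^{-2}$ on the parabolic region.

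For the estimate I would expand $\nabla P_n g=\chi\nabla g+\nabla\chi\otimes g+\nabla T\cdot\nabla\chi+T\cdot\nabla^2\chi$ and bound each term in $\Ld^s(I_n^+)$. The dominant one is $T\cdot\nabla^2\chi$: H\"older with $1/s=1/q+1/r$ gives $\|T\cdot\nabla^2\chi\|_{\Ld^s}\le\|T\|_{\Ld^q}\|\nabla^2\chi\|_{\Ld^r}$, and the constraint $q\le p^*$ translates precisely into $p\ge\frac{drs}{d(r-s)+rs}$ as in the statement. The parabolic integral $\|\nabla^2\chi\|_{\Ld^r(\Gamma)}^r\sim\int_0^\delta h(u)^{-2r+1}u^{d-2}\,du$ is then computed by the substitution $u=\sqrt{\rho}\,v$, reducing to a scale-invariant $v$-integral whose convergence at infinity governs the three cases in~\eqref{eq:def-mu} and yields $\|\nabla^2\chi\|_{\Ld^r(\Gamma)}\lesssim\mu_r(\rho_n)$. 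Combined with $\|T\|_{\Ld^q(\Gamma)}\lesssim\|g\|_{\Ld^p(I_n)}$ from Sobolev and Korn, this produces the announced bound. The remaining three terms involve at most one cutoff derivative and are lower order: $\chi\nabla g$ is controlled using $p\ge s$, while $\nabla\chi\otimes g$ and $\nabla T\cdot\nabla\chi$ are treated by a parallel H\"older analysis whose scaling is dominated by that of $T\cdot\nabla^2\chi$. The side conditions on $(r,s)$ in~\eqref{eq:ch-param} arise precisely to avoid borderline Sobolev exponents in these interpolations.

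The hard part will be the delicate extraction of the exponent $\mu_r(\rho_n)$ from the anisotropic parabolic geometry. The scaling $\rho^{-3/2}$ (instead of the $\rho^{-1/2}$ that would suffice in a scalar/compressible context) reflects the two-derivative cost of cutting off the vector potential $T$ rather than $g$ itself, only partially offset by the $\sqrt{\rho}$ gain associated with the local horizontal length scale of $T$ on the parabolic region; the critical threshold $r=(d+1)/3$ is dictated by the convergence at infinity of the scale-invariant $v$-integral. A secondary difficulty is to set up the vector potential $T$ on a sufficiently large neighborhood of $I_n$ so as to accommodate the nonlocal character of the antisymmetric-potential construction, while retaining $L^p$ control by $\|\D(g)\|_{\Ld^p(I_n)}$ alone through the Bogovski\u{\i} and Helmholtz corrections.
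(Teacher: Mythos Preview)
Your architecture is right --- vector potential plus parabolic cutoff --- but the crucial scaling step is wrong, and as written your argument loses a factor~$\rho_n^{-1/2}$ and does not deliver~$\mu_r(\rho_n)$.

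The issue is your H\"older splitting $\|T\cdot\nabla^2\chi\|_{\Ld^s}\le\|T\|_{\Ld^q}\|\nabla^2\chi\|_{\Ld^r}$ together with the claim $\|\nabla^2\chi\|_{\Ld^r(\Gamma)}\lesssim\mu_r(\rho_n)$. Your own integral $\int_0^\delta h(u)^{1-2r}u^{d-2}\,du$ evaluates, after $u=\sqrt\rho\,v$, to $\rho^{(d+1)/2-2r}$ times a convergent integral for $r>\tfrac{d+1}4$, hence $\|\nabla^2\chi\|_{\Ld^r(\Gamma)}\sim\rho_n^{(d+1)/(2r)-2}$ with threshold $r=\tfrac{d+1}4$, not $\rho_n^{(d+1)/(2r)-3/2}$ with threshold $r=\tfrac{d+1}3$ as in~$\mu_r$. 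The ``$\sqrt\rho$ gain from the horizontal length scale of $T$'' that you invoke in passing is the right intuition, but it is invisible to a global bound on $\|T\|_{\Ld^q}$: that norm carries no smallness in~$\rho_n$.

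The paper recovers the missing $\rho_n^{1/2}$ by a weighted H\"older split: smuggle in the weight $|x-x_n^j|$ near each quasi-contact point~$x_n^j$, so that one estimates the \emph{weighted} second gradient $\||\cdot-x_n^j|\,\nabla^2\chi\|_{\Ld^r}$, which does scale like $\mu_r(\rho_n)$ (this is exactly the content of~\eqref{eq:nabla2wnlin}), against $\||\cdot-x_n^j|^{-1}(T-\Theta_n^j)\|_{\Ld^{rs/(r-s)}}$. The latter is controlled by $\|\nabla T\|$ via Hardy's inequality, provided one subtracts a suitable skew-symmetric constant $\Theta_n^j$ (namely $T(x_n^j)$ when $\tfrac{rs}{r-s}>d$, and $0$ when $\tfrac{rs}{r-s}<d$); the excluded value $r=\tfrac{ds}{d-s}$ in the statement is precisely the Hardy borderline. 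Because each quasi-contact requires its own constant $\Theta_n^j$, the paper localizes with separate cutoffs~$\chi_n^j$ and glues the resulting fields away from contacts using a Bogovskii correction on the connected set $I_n^+\setminus(I_n\cup J_n)$ --- which is where the restriction $d>2$ enters. Your single global $\chi$ and single $T$ cannot accommodate this per-contact recentering.
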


For future reference, we also highlight the following key tool for pressure estimates. It follows from the above local extension result combined with a standard use of the Bogovskii operator.
Note that the restriction on the geometry of the domain~$D$ and the associated constant $K(D)$ can be refined as e.g.~in~\cite[Lemma~III.3.2 and Theorem~III.3.1]{Galdi}.

\begin{lem}\label{lem:Bog2}
Let $d>2$, let Assumptions~\ref{Hd} and~\ref{Hd'} hold, and let a realization of the random set~$\Ic$ be fixed.
Let $D\subset\R^d$ be a bounded Lipschitz domain that is star-like with respect to every point in some ball of radius $R_0$, and set $K(D):=\frac1{R_0}\diam(D)$.
Let~\mbox{$1<s\le r<\infty$}, with $r\ne\frac{ds}{d-s}$ if~$s<d$, and with $r<\frac{ds}{d+s-ds}$ if~$s<\frac{d}{d-1}$. Then, for all $h\in C_b(D)$ with~$\int_{D\setminus\Ic}h=0$, there exists $z\in W^{1,s}_0(D)^d$ such that
\[\D(z)|_{\Ic}=0,\qquad\text{and}\qquad\Div(z)=h\mathds1_{D\setminus\Ic},\quad\text{in $D$},\]
and for all $p\ge s\vee\frac{drs}{d(r-s)+rs}$, with $p>d$ if $r=s$,
\begin{equation}\label{eq:Bogo-bnd0}
\|\nabla z\|_{\Ld^s(D)}
\,\lesssim_{p,r,s}\,K(D)^{d+1}\Lambda(D;r,\tfrac{ps}{p-s})\,\|h\|_{\Ld^p(D\setminus\Ic)},
\end{equation}
where we recall the definition~\eqref{eq:lambda}--\eqref{eq:def-mu} of $\Lambda$.
\end{lem}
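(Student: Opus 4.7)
The strategy is to combine the classical Bogovski\u{\i} right-inverse of the divergence on $D$ with the local extension operators of Proposition~\ref{prop:ext-key} applied inclusion by inclusion. Since $D$ is star-like with respect to a ball of radius $R_0$, the Bogovski\u{\i} construction provides, for each $1<q<\infty$, a bounded linear right-inverse $B$ of the divergence from $\{f\in \Ld^q(D):\int_Df=0\}$ to $W^{1,q}_0(D)^d$, with operator norm $\lesssim K(D)^{d+1}$. Applying this to $h\mathds1_{D\setminus\Ic}$, which has zero mean by assumption, I set $z_0:=B(h\mathds1_{D\setminus\Ic})\in W^{1,s}_0(D)^d\cap W^{1,p}_0(D)^d$, so that $\Div(z_0)=h\mathds1_{D\setminus\Ic}$ and
\[\|\nabla z_0\|_{\Ld^q(D)}\,\lesssim\,K(D)^{d+1}\,\|h\mathds1_{D\setminus\Ic}\|_{\Ld^q(D)}\qquad\text{for }q\in\{s,p\}.\]

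On each inclusion $I_n$ meeting $D$ the restriction $z_0|_{I_n}$ is divergence-free, so by density Proposition~\ref{prop:ext-key} yields $P_n(z_0|_{I_n})\in W^{1,s}_0(I_n^+)^d$ with $\D(P_n(z_0|_{I_n}))|_{I_n}=\D(z_0|_{I_n})$, $\Div(P_n(z_0|_{I_n}))=0$ in $I_n^+$, and
\[\|\nabla P_n(z_0|_{I_n})\|_{\Ld^s(I_n^+)}\,\lesssim_{p,r,s}\,\mu_r(\rho_n)\,\|\!\D(z_0)\|_{\Ld^p(I_n)}.\]
Extending by zero and setting $z\,:=\,z_0-\sum_n P_n(z_0|_{I_n})$, the disjointness of the family $\{I_n^+\}_n$ ensures $z\in W^{1,s}_0(D)^d$. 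By construction $\D(z)$ vanishes on each $I_n$, and $\Div(z)=h\mathds1_{D\setminus\Ic}$: inside $I_n$ both terms contribute zero divergence, on $I_n^+\setminus I_n\subset D\setminus \Ic$ only the first term contributes and equals $h$, and outside $\bigcup_n I_n^+$ the correction vanishes.

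For the norm bound, disjointness of supports gives
\[\|\nabla z\|_{\Ld^s(D)}^s\,\le\,\|\nabla z_0\|_{\Ld^s(D)}^s+\sum_n\|\nabla P_n(z_0|_{I_n})\|_{\Ld^s(I_n^+)}^s,\]
and H\"older's inequality in $n$ with dual exponents $p/s$ and $p/(p-s)$ bounds the sum on the right by
\[\Big(\sum_n\mu_r(\rho_n)^{\frac{ps}{p-s}}\Big)^{\frac{p-s}{p}}\|\!\D(z_0)\|_{\Ld^p(\Ic)}^s\,\le\,\Lambda(D;r,\tfrac{ps}{p-s})^s\,\|\nabla z_0\|_{\Ld^p(D)}^s.\]
Combining this with the Bogovski\u{\i} estimate for $\|\nabla z_0\|_{\Ld^p(D)}$, and bounding $\|\nabla z_0\|_{\Ld^s(D)}\le K(D)^{d+1}|D|^{1/s-1/p}\|h\|_{\Ld^p(D\setminus\Ic)}$ via H\"older together with the elementary $|D|^{1/s-1/p}\le \Lambda(D;r,\tfrac{ps}{p-s})$, yields~\eqref{eq:Bogo-bnd0}.

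The main point requiring care is matching the admissible exponents between~\eqref{eq:final-est-trunc} and~\eqref{eq:Bogo-bnd0}, already anticipated in the hypotheses on $(p,r,s)$. The remaining technical point---particles for which $I_n^+$ protrudes past $\partial D$---is handled by replacing $I_n^+$ with $I_n^+\cap D$, exactly in the spirit of the setup preceding Theorem~\ref{th:main-qsh}, while the density step feeding the Sobolev function $z_0|_{I_n}$ into $P_n$ is standard given the $C^2$ regularity of $\partial I_n$ and the linearity of the extension operator.
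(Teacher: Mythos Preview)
Your proof is correct and follows essentially the same route as the paper: apply the Bogovski\u{\i} operator to $h\mathds1_{D\setminus\Ic}$ to obtain $z_0$, subtract the local corrections $P_n(z_0|_{I_n})$ from Proposition~\ref{prop:ext-key} to kill $\D(z)$ on each inclusion while preserving the divergence, and conclude by H\"older's inequality in $n$. The only cosmetic slip is the inequality $\|\nabla z\|_{\Ld^s(D)}^s\le\|\nabla z_0\|_{\Ld^s(D)}^s+\sum_n\|\nabla P_n(z_0|_{I_n})\|_{\Ld^s(I_n^+)}^s$, which should carry a multiplicative constant depending on $s$ (the supports of the $P_n$'s are disjoint from one another but not from $z_0$); the paper likewise writes $\lesssim_s$ at this step.
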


\subsection{Cut-off functions}\label{sec:prel-In+}
We start with the construction of suitable cut-off functions for the inclusions~$\{I_n\}_n$ in their neighborhoods $\{I_n^+\}_n$.
The open subsets $\{J_n^j\}_j$ in the statement below are neighborhoods of quasi-contact points, that is, neighborhoods where~$\partial I_n$ and $\partial I_n^+$ are very close; see~Figure~\ref{fig3}.
The proof is inspired by the work of Jikov on homogenization problems with stiff inclusions, e.g.~\cite[Section~3.2]{JKO94}, and is also analogous to computations by Gérard-Varet and Hillairet in~\cite{GVH-12} for the drag force on a sphere close to a wall.
This result is easily adapted beyond Assumption~\ref{Hd'} to cover higher-order quasi-contacts between the particles, then leading to a worse dependence on the distance~$\rho_n$.

\begin{lem}[Cut-off functions]\label{lem:wn}
Let Assumptions~\ref{Hd} and~\ref{Hd'} hold, and let a realization of the random set $\Ic$ be fixed.
For all $n$, there exists a function $w_n\in W^{1,\infty}_0(I_n^+;[0,1])$ with $w_n|_{I_n}=1$ such that for all~$r\ge1$,
\begin{equation}\label{eq:nablawn}
\|\nabla w_n\|_{\Ld^r(I_n^+)}\,\lesssim_r\,\left\{\begin{array}{lll}
\rho_n^{\frac{d+1}{2r}-1}&:&r>\frac{d+1}2,\\
|\!\log\rho_n|^\frac1r&:&r=\frac{d+1}2,\\
1&:&r<\frac{d+1}2,
\end{array}\right.
\end{equation}
and
\begin{equation}\label{eq:nabla2wn}
\|\nabla^2 w_n\|_{\Ld^r(I_n^+)}\,\lesssim_r\,\left\{\begin{array}{lll}
\rho_n^{\frac{d+1}{2r}-2}&:&r>\frac{d+1}4,\\
|\!\log\rho_n|^\frac1r&:&r=\frac{d+1}4,\\
1&:&r<\frac{d+1}4.
\end{array}\right.
\end{equation}
In addition, there is a collection $\{J_n^j\}_{j=1}^{M_n}$ of open subsets of the form $J_n^j=B(x_n^j,\frac1C\delta)\cap I_n^+$,
with $M_n\lesssim1$ and $\dist(J_n^j,J_n^k)\ge\frac1C\delta$ for all $j\ne k$, such that
\[\|w_n\|_{W^{2,\infty}(I_n^+\setminus\bigcup_{j=1}^{M_n}J_n^j)}\,\lesssim\,1,\]
and for all $r\ge1$,
\begin{equation}\label{eq:nabla2wnlin}
\max_{1\le j\le M_n}\||\cdot-x_n^j|\nabla^2w_n\|_{\Ld^r(J_n^j)}\,\lesssim_r\,\left\{\begin{array}{lll}
\rho_n^{\frac{d+1}{2r}-\frac32}&:&r>\frac{d+1}3,\\
|\!\log\rho_n|^\frac1r&:&r=\frac{d+1}3,\\
1&:&r<\frac{d+1}3.
\end{array}\right.\qedhere
\end{equation}
\end{lem}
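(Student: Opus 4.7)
The plan is to construct $w_n$ by patching an explicit ``parabolic'' cut-off on each quasi-contact patch $\{J_n^j\}_{j=1}^{M_n}$ supplied by Assumption~\ref{Hd'} with a trivial smooth cut-off on the remaining bulk part of $I_n^+\setminus I_n$. For the geometric setup, note that $\partial I_n$ is a compact $C^2$ hypersurface by the interior/exterior ball condition in~\ref{Hd}, and its area is $O(1)$ in view of $I_n\subset x_n+B$; I would cover it by $O(1)$ balls of radius $\tfrac1C\delta$ and retain only those centred at ``quasi-contact'' points $x_n^j\in\partial I_n$ with $\dist(x_n^j,\partial I_n^+)\le c\delta$. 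The $\delta$-non-osculating condition in~\ref{Hd'} forces two such points to be $\gtrsim\delta$-apart, hence $M_n\lesssim1$ and the patches $J_n^j:=B(x_n^j,\tfrac1C\delta)\cap I_n^+$ are disjoint, while away from these patches $\dist(\partial I_n,\partial I_n^+)\gtrsim\delta$.

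On each patch I would use the local frame of~\ref{Hd'}: after rotation and translation, $I_n$ lies above $\{x_1=\tfrac1{a_1}|x'|^2\}$ and the opposing parabolic barrier is $\{x_1=-\rho+\tfrac1{a_2}|x'|^2\}$, with $\rho\le\rho_n$, $a_1,a_2\gtrsim\delta$ and $\tfrac1{a_1}-\tfrac1{a_2}\ge\delta$, so the local gap width at height $|x'|$ is $\simeq\rho+|x'|^2$. I would then set
\[
w_n(x_1,x')\,:=\,\chi\!\bigg(\frac{\tfrac1{a_1}|x'|^2-x_1}{\rho+(\tfrac1{a_1}-\tfrac1{a_2})|x'|^2}\bigg),
\]
with a fixed $\chi\in C^\infty(\R;[0,1])$ equal to $1$ on $(-\infty,0]$ and to $0$ on $[\tfrac12,\infty)$. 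Then $w_n=1$ on $I_n\cap J_n^j$ and $w_n$ already vanishes at the Voronoi midsurface of the two barrier paraboloids, hence on $\partial I_n^+\cap J_n^j$. In the bulk, take any fixed smooth cut-off of the $\tfrac\delta2$-neighborhood of $I_n$. A partition of unity with $O(1)$ derivatives glues the pieces and contributes only $O(1)$ to every norm.

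The estimates then reduce to one-dimensional calculus. Direct differentiation of the explicit formula together with $a_1,a_2\gtrsim\delta$ yield on each $J_n^j$ the pointwise bounds $|\nabla w_n|\lesssim(\rho+|x'|^2)^{-1}$, $|\nabla^2 w_n|\lesssim(\rho+|x'|^2)^{-2}$, and also $|x-x_n^j|\lesssim(\rho+|x'|^2)^{1/2}$. Integrating first in $x_1$ over a slice of width $\simeq\rho+|x'|^2$ and passing to radial coordinates in~$x'$ reduces each of the $L^r$ norms in~\eqref{eq:nablawn}, \eqref{eq:nabla2wn}, \eqref{eq:nabla2wnlin} to $\int_0^{\delta}t^{d-2}(\rho+t^2)^{-k}\,dt$ with $k=r-1$, $2r-1$, $\tfrac{3r-2}{2}$ respectively. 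The substitution $t=\sqrt\rho\,u$ produces a prefactor $\rho^{(d-1)/2-k}$ times a universal tail integral that is finite iff $2k>d-1$, logarithmic at equality, and bounded independently of $\rho$ otherwise. Reading off the resulting exponents gives precisely the thresholds $r=\tfrac{d+1}2,\tfrac{d+1}4,\tfrac{d+1}3$ with the claimed rates and logarithmic corrections.

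The main obstacle is the geometric bookkeeping rather than the analysis: one has to check that the parabolic coordinates given \emph{pointwise} by~\ref{Hd'} can be deployed uniformly on a whole patch of radius~$\tfrac1C\delta$ with a common distance parameter $\rho\simeq\rho_n$ and common radii $a_1,a_2\gtrsim\delta$, and that the Voronoi midsurface inside $J_n^j$ is reached before $\partial I_n^+$ so that the cut-off above is indeed compactly supported in~$I_n^+$. Once this is in place, the computation follows the pattern of the scalar stiff-inclusion construction of Jikov, see~\cite[Sec.~3.2]{JKO94}, and of the wall-sphere lubrication estimate of G\'erard-Varet--Hillairet in~\cite{GVH-12}.
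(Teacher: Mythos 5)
Your proposal is correct and takes essentially the same route as the paper's proof: reduce to a model cut-off in the parabolic frame of Assumption~\ref{Hd'} on each of the $O(1)$ quasi-contact patches (with a trivial bulk cut-off and gluing elsewhere), establish the pointwise bounds $|\nabla w_n|\lesssim(\rho+|x'|^2)^{-1}$ and $|\nabla^2 w_n|\lesssim(\rho+|x'|^2)^{-2}$, and integrate over the slab of width $\simeq\rho+|x'|^2$ in radial coordinates to read off the thresholds $\tfrac{d+1}2,\tfrac{d+1}4,\tfrac{d+1}3$. The differences are cosmetic: you compose a smooth $\chi$ with the normalized gap coordinate (vanishing already at the mid-surface) where the paper uses the cubic $h(t)=t^2(3-2t)$ across the full gap --- both secure the regularity across the paraboloids that plain linear interpolation would destroy --- and your cruder weight bound $|x-x_n^j|\lesssim(\rho+|x'|^2)^{1/2}$ (the paper uses $\rho+|x'|$) still yields the exponents in~\eqref{eq:nabla2wnlin}; just note that the admissible $\rho$ should be taken close to $\rho_n(x)\ge\rho_n$ (not $\rho\le\rho_n$), which is what makes the bounds expressible in terms of $\rho_n$.
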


\begin{figure}
\begin{center}
\includegraphics[scale=1.5]{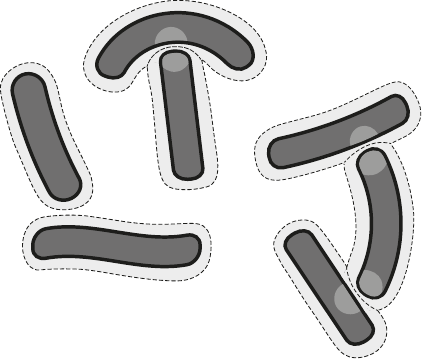}
\caption{\label{fig3}This displays a configuration of close particles. Disjoint neighborhoods $\{I_n^+\}_n$ are represented around the particles, and suitable neighborhoods $\{J_n^j\}_{j}$ of quasi-contact points are drawn in light gray.}
\end{center}
\end{figure}

\begin{proof}
Under Assumption~\ref{Hd'}, the construction of the neighborhoods $\{J_n^j\}_{j=1}^{M_n}$ is transparent, cf.~Figure~\ref{fig3}, and we only need to construct $w_n$ in one of those sets.
In view of the definition of the parabolic domains $\Gamma_{a_1,a_2}^\pm(\rho)$, cf.~\eqref{eq:def-Gamma}, it suffices to construct a cut-off function $w_{a_1,a_2}^\rho$ in $B_\delta\subset\R\times\R^{d-1}$ such that $w_{a_1,a_2}^\rho=0$ for $x_1<-\rho+\frac1{a_2}|x'|^2$ and $w_{a_1,a_2}^\rho=1$ for $x_1>\frac1{a_1}|x'|^2$.
By assumption we consider $a_2>a_1\ge\delta$ with $\frac1{a_1}-\frac1{a_2}\ge\delta$, and by scaling it suffices to consider $a_1=1$. More precisely, we consider the set
\[E=\big\{(x_1,x')\in\R\times\R^{d-1}:x_1\ge-1,~|x'|\le\tfrac12\big\},\]
and, given $\rho>0$ and $a>1$ with $1-\frac1a\ge\delta$, we construct a cut-off function $w_a^\rho\in C^{1,1}_b(E)$ such that $w_a^\rho=0$ for $x_1<-\rho+\frac1a|x'|^2$ and $w_a^\rho=1$ for $x_1>|x'|^2$, and such that for all~$r\ge1$,
\begingroup\allowdisplaybreaks
\begin{eqnarray}
\|\nabla w_a^\rho\|_{\Ld^r(E)}&\lesssim_r&\left\{\begin{array}{lll}
\rho^{\frac{d+1}{2r}-1}&:&r>\frac{d+1}2,\\
|\!\log\rho|^\frac1r&:&r=\frac{d+1}2,\\
1&:&r<\frac{d+1}2,
\end{array}\right.\label{eq:suff-B1B2glue1}\\
\|\nabla^2 w_a^\rho\|_{\Ld^r(E)}&\lesssim_r&\left\{\begin{array}{lll}
\rho^{\frac{d+1}{2r}-2}&:&r>\frac{d+1}4,\\
|\!\log\rho|^\frac1r&:&r=\frac{d+1}4,\\
1&:&r<\frac{d+1}4,
\end{array}\right.\label{eq:suff-B1B2glue2}\\
\||\cdot|\nabla^2 w_a^\rho\|_{\Ld^r(E)}&\lesssim_r&\left\{\begin{array}{lll}
\rho^{\frac{d+1}{2r}-\frac32}&:&r>\frac{d+1}3,\\
|\!\log\rho|^\frac1r&:&r=\frac{d+1}3,\\
1&:&r<\frac{d+1}3.
\end{array}\right.\label{eq:suff-B1B2glue3}
\end{eqnarray}\endgroup
In other words, we need to construct a suitable interpolation between $1$ and $0$ in the domain enclosed by the two parabolas,
\[\big\{(x_1,x')\in \R\times\R^{d-1}:-\rho+\tfrac1a|x'|^2< x_1<|x'|^2,~|x'|\le\tfrac12\big\}.\]
As we aim to construct a $C^{1,1}$ test function, we cannot use linear interpolation: instead of the linear function $h^0(t)=t$ with $h^0(0)=0$ and $h^0(1)=1$, we rather consider as in~\cite{GVH-12} the cubic function
\[h(t)\,:=\,t^2(3-2t),\]
with $h(0)=0$, $h(1)=1$, and $h'(0)=h'(1)=0$.
We then define
\[w_a^\rho(x)\,:=\,w_a^\rho(x_1,x')\,:=\,\left\{\begin{array}{lll}
0&:&x_1\le-\rho+\frac1a|x'|^2,\\
h\big(\tfrac1{\theta_a^\rho(x')}(\rho+x_1-\frac1a|x'|^2)\big)&:&-\rho+\frac1a|x'|^2\le x_1\le|x'|^2,\\
1&:&x_1\ge|x'|^2,
\end{array}\right.\]
where for abbreviation we denote by $\theta_a^\rho$ the horizontal distance between the two parabolas,
\[\theta_a^\rho(x')\,:=\,\rho+(1-\tfrac1a)|x'|^2.\]
We check that $w_a^\rho$ belongs to $C^{1,1}(E)$ and it remains to establish the estimates~\eqref{eq:suff-B1B2glue1}--\eqref{eq:suff-B1B2glue3}.
Recalling the assumption $1-\frac1a\ge\delta$, a direct computation shows that there holds for~$-\rho+\frac1a|x'|^2\le x_1\le|x'|^2$,
\begin{equation}\label{eq:key-est-w}
|\nabla w_a^\rho(x)|\lesssim (\rho+|x'|^2)^{-1},\qquad |\nabla^2 w_a^\rho(x)|\lesssim (\rho+|x'|^2)^{-2}.
\end{equation}
We start with the proof of~\eqref{eq:suff-B1B2glue1}.
Using~\eqref{eq:key-est-w}, evaluating the integral over $x_1$, and using radial coordinates, we find
\[\int_{E}|\nabla w_a^\rho|^r\,\lesssim_r\,\int_{|x'|\le\frac12}(\rho+|x'|^2)^{1-r}\,dx'\,\lesssim\,\int_0^\frac12\frac{s^{d-2}}{(\rho+s^2)^{r-1}}\,ds,\]
which proves~\eqref{eq:suff-B1B2glue1} after evaluating the integral. The proof of~\eqref{eq:suff-B1B2glue2} follows the same line and is skipped.

\medskip\noindent
We turn to the proof of~\eqref{eq:suff-B1B2glue3}. For $-\rho+\frac1a|x'|^2\le x_1\le|x'|^2$ and $|x'|\le\frac12$, we find
\begin{eqnarray*}
|x|\,\le\,|x_1|+|x'|\,\lesssim\,\rho+|x'|.
\end{eqnarray*}
Combining this with~\eqref{eq:key-est-w}, evaluating the integral over $x_1$, and using radial coordinates, we find
\begin{eqnarray*}
\int_E|\cdot|^r|\nabla^2w_a^\rho|^r&\lesssim_r&\rho^r\int_{|x'|\le\frac12}(\rho+|x'|^2)^{1-2r}\,dx'+\int_{|x'|\le\frac12}|x'|^r\,(\rho+|x'|^2)^{1-2r}\,dx'\\
&\lesssim&\rho^r\int_0^{\frac12}\frac{s^{d-2}}{(\rho+s^2)^{2r-1}}\,ds+\int_0^{\frac12}\frac{s^{d+r-2}}{(\rho+s^2)^{2r-1}}\,ds
\end{eqnarray*}
which proves~\eqref{eq:suff-B1B2glue3} after evaluating the integrals.
\end{proof}

\subsection{Proof of Proposition~\ref{prop:ext-key}}
Starting from a Stein extension of $g$, the argument relies on the cut-off function constructed in Lemma~\ref{lem:wn} in order to make this extension vanish on the boundary $\partial I_n^+$. A naïve cut-off would however break the incompressibility property and cause serious troubles especially close to quasi-contact points. Instead, in the spirit of~\cite{Francfort-92}, taking inspiration from calculations by Gérard-Varet and Hillairet in~\cite{GVH-12}, we take cut-offs at the level of the vector potential.
We split the proof into three main steps.

\medskip
\step1 Extension to $I_n^+$.\\
Given $g\in C^1_b(I_n)^d$ with $\Div (g)=0$, we construct an extension $P_n^1g\in H^1_0(I_n+ B)^d$ such that
\begin{equation}\label{eq:restrict-P1}
\D(P_n^1g)|_{I_n}=\D(g),\qquad\text{and}\qquad\Div(P_n^1g)=0,\quad\text{in $I_n+B$},
\end{equation}
and for all $1<s<\infty$,
\begin{equation}\label{eq:first-ext0}
\|\nabla P_n^1g\|_{\Ld^s(I_n+B)}\,\lesssim_{s}\,\|\!\D(g)\|_{\Ld^s(I_n)}.
\end{equation}
For that purpose, let us first choose $V_g\in\R^d$ and $\Theta_g\in\Md^\Skew$ such that Korn's inequality yields for all $1<s<\infty$,
\begin{equation}\label{eq:Korn-g}
\|g-V_g-\Theta_gx\|_{W^{1,s}(I_n)}\,\lesssim_s\,\|\!\D(g)\|_{\Ld^s(I_n)}.
\end{equation}
Next, in view of the $C^2$ regularity of $I_n$, cf.\@ Assumption~\ref{Hd}, we can choose a Stein extension $P_n^0g\in C^1_b(I_n+B)^d$ with $P_n^0g|_{I_n}=g-V_g-\Theta_gx$, such that for all~$s\ge1$,
\begin{equation*}
\|P_n^0g\|_{W^{1,s}(I_n+B)}\,\lesssim_{s}\,\|g-V_g-\Theta_gx\|_{W^{1,s}(I_n)},
\end{equation*}
and thus, by~\eqref{eq:Korn-g}, for all $1<s<\infty$,
\begin{equation}\label{eq:first-ext00}
\|P_n^0g\|_{W^{1,s}(I_n+B)}\,\lesssim_{s}\,\|\!\D(g)\|_{\Ld^{s}(I_n)}.
\end{equation}
It remains to apply a cut-off to $P_n^0g$ to make it vanish on the boundary $\partial(I_n+B)$ while keeping the properties in~\eqref{eq:restrict-P1}.
For that purpose, choose a cut-off function $\chi\in C^\infty_c(I_n+B)$ with $\chi|_{I_n}=1$ and $\|\chi\|_{W^{1,\infty}(I_n+B)}\lesssim1$. By a standard construction based on the Bogovskii operator, e.g.~\cite[Theorem~III.3.1]{Galdi}, since the following compatibility relation holds,
\begin{equation*}
\int_{(I_n+B)\setminus I_n}\Div(\chi P_n^0g)\,=\,-\int_{\partial I_n}g\cdot\nu\,=\,-\int_{I_n}\Div(g)\,=\,0,
\end{equation*}
there exists $z_n(g)\in H^1_0((I_n+B)\setminus I_n)^d$ such that
\[\Div(z_n(g))=\Div(\chi P_n^0g),\qquad\text{in $(I_n+B)\setminus I_n$},\]
and for all $1<s<\infty$,
\[\|\nabla z_{n}(g)\|_{\Ld^s((I_n+B)\setminus I_n)}\,\lesssim_s\,\|\Div(\chi P_n^0g)\|_{\Ld^s((I_n+B)\setminus I_n)}.\]
Expanding the divergence in the right-hand side of this estimate, and using~\eqref{eq:first-ext00}, we find for all~$1<s<\infty$,
\begin{equation}\label{eq:prebnd-P1}
\|\nabla z_{n}(g)\|_{\Ld^s((I_n+B)\setminus I_n)}\,\lesssim_s\,\|P_n^0g\|_{W^{1,s}(I_n+B)}\,\lesssim_s\,\|\!\D(g)\|_{\Ld^s(I_n)}.
\end{equation}
Now we define
\[P_n^1g\,:=\,\chi P_n^0g-z_n(g)~~\in~H^1_0(I_n+B)^d,\]
which indeed satisfies $P_n^1g|_{I_n}=P_n^0g|_{I_n}=g-V_g-\Theta_gx$ and $\Div(P_n^1g)=0$, hence~\eqref{eq:restrict-P1}.
In addition, combining~\eqref{eq:first-ext00} and~\eqref{eq:prebnd-P1} yields for all $1<s<\infty$,
\[\|\nabla P_n^1g\|_{\Ld^s(I_n+B)}\,\lesssim\,\|P_n^0g\|_{W^{1,s}(I_n+B)}+\|\nabla z_n(g)\|_{\Ld^s((I_n+B)\setminus I_n)}\,\lesssim_s\,\|\!\D(g)\|_{\Ld^s(I_n)},\]
that is,~\eqref{eq:first-ext0}.

\medskip
\step2 Matrix potential for $P_n^1g$.\\
We construct a matrix field $\sigma[P_n^1g]\in C^1(\R^d)^{d\times d}_\Skew$ that decays at infinity such that
\begin{equation}\label{eq:restrict-sigma}
\Div(\sigma[P_n^1g])|_{I_n^+}=P_n^1g|_{I_n^+},
\end{equation}
and such that for all $\frac{d}{d-1}<s<\infty$ and $p>d$,
\begin{eqnarray}\label{eq:bnd-sigm}
\|\nabla\sigma[P_n^1g]\|_{\Ld^s(\R^d)}&\lesssim_s&\|\!\D(g)\|_{\Ld^\frac{ds}{d+s}(I_n)},\nonumber\\
\|\nabla\sigma[P_n^1g]\|_{\Ld^\infty(\R^d)}&\lesssim_p&\|\!\D(g)\|_{\Ld^p(I_n)}.
\end{eqnarray}
For that purpose, we extend $P_n^1g$ by $0$ to $\R^d$, viewing it as a compactly supported element of $H^1(\R^d)^d$, and for all $i,j$ we define $\nabla\sigma_{ij}[P_n^1g]\in\Ld^2(\R^d)^d$ as the unique solution of
\begin{equation}\label{eq:def-sigma}
-\triangle \sigma_{ij}[P_n^1g]\,=\,\partial_i(P_n^1g)_j-\partial_j(P_n^1g)_i,\qquad\text{in $\R^d$}.
\end{equation}
In view of~\eqref{eq:first-ext0}, Calder\'on--Zygmund potential theory yields $\nabla\sigma_{ij}[P_n^1g]\in W^{1,s}(\R^d)^d$ for all $1<s<\infty$. Moreover, as $P_n^1g$ is compactly supported, Riesz potential theory ensures that $\sigma_{ij}[P_n^1g]$ can itself be uniquely chosen as a decaying element in $C^1(\R^d)$.
Uniqueness and the form of the right-hand side in~\eqref{eq:def-sigma} ensure that $\sigma[P_n^1g]$ is skew-symmetric.
Taking the divergence in~\eqref{eq:def-sigma}, and using that $\Div(P_n^1g)=0$, we find
\[-\triangle\Div(\sigma[P_n^1g])=-\triangle P_n^1g,\qquad\text{in $\R^d$},\]
which entails 
\begin{equation*}
\Div(\sigma[P_n^1g])=P_n^1g,
\end{equation*}
that is,~\eqref{eq:restrict-sigma}.
It remains to check~\eqref{eq:bnd-sigm}.
First, for all $\frac{d}{d-1}\le s<\infty$ and~$p>d$, the Sobolev embedding gives
\begin{eqnarray*}
\|\nabla\sigma[P_n^1g]\|_{\Ld^s(\R^d)}&\lesssim_s&\|\nabla^2\sigma[P_n^1g]\|_{\Ld^\frac{ds}{d+s}(\R^d)},\\
\|\nabla\sigma[P_n^1g]\|_{\Ld^\infty(\R^d)}&\lesssim_p&\|\nabla\sigma[P_n^1g]\|_{W^{1,p}(\R^d)}.
\end{eqnarray*}
Second, for all $1<s<\infty$, Calder\'on--Zygmund potential theory for~\eqref{eq:def-sigma} gives
\begin{eqnarray}\label{eq:bnd-re-P200}
\|\nabla^2\sigma[P_n^1g]\|_{\Ld^s(\R^d)}&\lesssim_s&\|\nabla P_n^1g\|_{\Ld^s(\R^d)},\nonumber\\
\|\nabla\sigma[P_n^1g]\|_{\Ld^s(\R^d)}&\lesssim_s&\|P_n^1g\|_{\Ld^s(\R^d)}.
\end{eqnarray}
Combining these two ingredients, appealing to Poincaré's inequality for $P_n^1g$ supported in~$I_n+B$, and using~\eqref{eq:first-ext0}, the claim~\eqref{eq:bnd-sigm} follows.
For future reference, we note that a similar argument also gives, for all $\frac{d}{d-1}<s<\infty$ and $p>d$,
\begin{eqnarray}\label{eq:bnd-re-P2}
\|P_n^1g\|_{\Ld^s(\R^d)}&\lesssim_s&\|\!\D(g)\|_{\Ld^\frac{ds}{d+s}(I_n)},\nonumber\\
\|P_n^1g\|_{\Ld^\infty(\R^d)}&\lesssim_p&\|\!\D(g)\|_{\Ld^p(I_n)}.
\end{eqnarray}

\medskip
\step3 Conclusion.\\
Recall the cut-off function $w_n\in H^1_0(I_n^+)$ that we have constructed in Lemma~\ref{lem:wn}, as well as the collection of neighborhoods of quasi-contact points $J_n^j=B(x_n^j,\frac1C\delta)\cap I_n^+$, \mbox{$1\le j\le M_n$}.
Recalling that $\dist(J_n^j,J_n^k)\ge\frac1C\delta$ for $j\ne k$, we further define enlarged neighborhoods
\[J_n^j\quad\subset\quad J_n^{j,+}:=B(x_n^j,\tfrac{6}{5C}\delta)\cap I_n^+\quad\subset\quad J_n^{j,++}:=B(x_n^j,\tfrac{7}{5C}\delta)\cap I_n^+,\]
which then satisfy $\dist(J_n^{j,++},J_n^{k,++})\ge\frac{1}{5C}\delta$ for $j\ne k$,
and we write for abbreviation
\[\textstyle J_n:=\bigcup_{j=1}^{M_n}J_n^j,\qquad J_n^+:=\bigcup_{j=1}^{M_n}J_n^{j,+},\qquad J_n^{++}:=\bigcup_{j=1}^{M_n}J_n^{j,++}.\]
We split the proof into two further substeps, first constructing the extension $P_ng$ close to quasi-contact points in $J_n^+$, and then completing the construction globally.

\medskip
\substep{3.1} Construction of $P_ng$ close to quasi-contact points.\\
Given $1<s\le r<\infty$ with $r\ne\frac{ds}{d-s}$ if $s<d$, and with $r<\frac{ds}{d+s-ds}$ if $s<\frac{d}{d-1}$,
we construct a vector field $P_n^2g\in H^1_0(I_n^+)^d$ such that
\begin{equation}\label{eq:Pn2-restr}
P_n^2g|_{I_n\cap J_n^{+}}=P_n^1g|_{I_n\cap J_n^+},\qquad\text{and}\qquad \Div(P_n^2g)=0,\quad\text{in $I_n^+$},
\end{equation}
and for all $p\ge s\vee \frac{drs}{d(r-s)+rs}$, with $p>d$ if $r=s$,
\begin{equation}\label{eq:bnd-key-P3}
\|\nabla P_n^2g\|_{\Ld^s(I_n^+)}\,\lesssim_{p,r,s}\,\mu_r(\rho_n)\,\|\!\D(g)\|_{\Ld^p(I_n)},
\end{equation}
where we recall the notation~\eqref{eq:def-mu} for $\mu_{r}$.
For all $j$ we first choose a smooth cut-off function $\chi_n^j\in C^\infty(I_n^+)$ such that
\[\chi_n^j|_{J_n^{j,+}}=1,\qquad\chi_n^j|_{I_n^+\setminus J_n^{j,++}}=0,\qquad\|\chi_n^j\|_{W^{2,\infty}(I_n^+)}\lesssim1.\]
Given a collection of matrices $\{\Theta_n^j\}_{j=1}^{M_n}\subset\Md^\Skew$ to be fixed later, we then define
\begin{equation}\label{eq:def-Pn3}
P_n^2g\,:=\,\sum_{j=1}^{M_n}\Div\big(w_n\chi_n^j(\sigma[P_n^1g]-\Theta_n^j)\big).
\end{equation}
By definition of $w_n$, this is supported in $I_n^+$ and satisfies, in view of~\eqref{eq:restrict-sigma},
\[P_n^2g|_{I_n\cap J_n^+}=\Div(\sigma[P_n^1g])|_{I_n\cap J_n^+}=P_n^1g|_{I_n\cap J_n^+}.\]
Moreover, since $\sigma[P_n^1g]-\Theta_n^j$ is skew-symmetric, we obviously have $\Div(P_n^2g)=0$.
It remains to estimate the norm of $\nabla P_n^2g$.
To this aim, using~\eqref{eq:restrict-sigma} again, we compute
\begin{equation*}
\nabla P_n^2g
\,=\,\sum_{j=1}^{M_n}\nabla(w_n\chi_n^jP_n^1g)+\sum_{j=1}^{M_n}\nabla\big((\sigma[P_n^1g]-\Theta_n^j)\nabla (w_n\chi_n^j)\big).
\end{equation*}
Expanding the gradients, smuggling in the weights $x\mapsto|x-x_n^j|$, and using Hölder's inequality, we find for all $r\ge s\ge1$,
\begin{multline}\label{eq:prebnd-Png00}
\|\nabla P_n^2g\|_{\Ld^s(I_n^+)}\,\lesssim\,\|\nabla P_n^1g\|_{\Ld^s(I_n^+)}
+\|\nabla(w_n\chi_n^j)\|_{\Ld^{r}(I_n^+)}\|(P_n^1g,\nabla\sigma[P_n^1g])\|_{\Ld^\frac{rs}{r-s}(I_n^+)}\\
+\sum_{j=1}^{M_n}\||\cdot-x_n^j|\nabla^2(w_n\chi_n^j)\|_{\Ld^r(J_n^{j,++})}\||\cdot-x_n^j|^{-1}(\sigma[P_n^1g]-\Theta_n^j)\|_{\Ld^\frac{rs}{r-s}(J_n^{j,++})},
\end{multline}
and thus, inserting the estimates of Lemma~\ref{lem:wn} for norms of the cut-off function $w_n$, and recalling the definition~\eqref{eq:def-mu} of $\mu_r$,
\begin{multline}\label{eq:prebnd-Png}
\|\nabla P_n^2g\|_{\Ld^s(I_n^+)}\,\lesssim_r\,\mu_r(\rho_n)\bigg(\|\nabla P_n^1g\|_{\Ld^s(I_n^+)}
+\|(P_n^1g,\nabla\sigma[P_n^1g])\|_{\Ld^\frac{rs}{r-s}(I_n^+)}\\
+\sum_{j=1}^{M_n}\||\cdot-x_n^j|^{-1}(\sigma[P_n^1g]-\Theta_n^j)\|_{\Ld^\frac{rs}{r-s}(J_n^{j,++})}\bigg).
\end{multline}
We estimate the right-hand side in two different ways, corresponding to two different choices of the constants $\{\Theta_n^j\}_{j=1}^{M_n}$ and allowing for complementary ranges of exponents.
\begin{enumerate}[$\bullet$]
\item {\it Case~1:} choosing $\Theta_n^j=0$ for all $j$, given $1< s< d$ and $r>\frac{ds}{d-s}$, with~$r<\frac{ds}{d+s-ds}$ if~$s<\frac{d}{d-1}$, we obtain for all $p\ge s\vee\frac{drs}{d(r-s)+rs}$,
\begin{equation}\label{eq:first-bnd0}
\qquad\|\nabla P_n^2g\|_{\Ld^s(I_n^+)}\,\lesssim_{r,s}\,\mu_r(\rho_n)\,
\|\!\D(g)\|_{\Ld^{p}(I_n)}.
\end{equation}
For that purpose, we appeal to Hardy's inequality in the following form, see e.g.~\cite[Sections~1.3 and~12.8]{Opic-Kufner-90}, for all $x_0\in\R^d$ and~$1\le p<d$,
\[\qquad\||\cdot-x_0|^{-1}\sigma[P_n^1g]\|_{\Ld^p(\R^d)}\,\lesssim_p\,\|\nabla\sigma[P_n^1g]\|_{\Ld^p(\R^d)}.\]
Choosing $\Theta_n^j=0$,
and inserting this estimate into~\eqref{eq:prebnd-Png}, we find for all $r\ge s\ge1$ with $\frac{rs}{r-s}<d$,
\begin{equation*}
\qquad\|\nabla P_n^2g\|_{\Ld^s(I_n^+)}\,\lesssim_{r,s}\,\mu_r(\rho_n)\,\Big(\|\nabla P_n^1g\|_{\Ld^s(\R^d)}+ \|(P_n^1g,\nabla\sigma[P_n^1g])\|_{\Ld^\frac{rs}{r-s}(\R^d)}\Big),
\end{equation*}
and the claim~\eqref{eq:first-bnd0} follows from~\eqref{eq:first-ext0}, \eqref{eq:bnd-sigm}, and~\eqref{eq:bnd-re-P2}.

\smallskip\item {\it Case~2:} choosing $\Theta_n^j=\sigma[P_n^1g](x_n^j)$ for all $j$, given~$1< s\le r<\infty$, with $r<\frac{ds}{d-s}$ if~$s<d$, we obtain for all $p\ge s\vee\frac{drs}{d(r-s)+rs}$, with $p>d$ if $r=s$,
\begin{equation}\label{eq:second-bnd0}
\qquad\|\nabla P_n^2g\|_{\Ld^s(I_n^+)}\,\lesssim_{p,r,s}\,
\mu_r(\rho_n)\,\|\!\D(g)\|_{\Ld^{p}(I_n)}.
\end{equation}
For that purpose, we appeal to Hardy's inequality in the following form, see e.g.~\cite[Sections~1.3 and~12.8]{Opic-Kufner-90}, for all $x_0\in I_n^+$ and~$d<p\le\infty$,
\[\qquad\||\cdot-x_0|^{-1}(\sigma[P_n^1g]-\sigma[P_n^1g](x_0))\|_{\Ld^p(I_n^+)}\,\lesssim_p\,\|\nabla\sigma[P_n^1g]\|_{\Ld^p(\R^d)}.\]
Choosing $\Theta_n^j=\sigma[P_n^1g](x_n^j)$, and inserting this estimate into~\eqref{eq:prebnd-Png}, we find for all $r\ge s\ge1$ with $\frac{rs}{r-s}>d$,
\begin{equation*}
\qquad\|\nabla P_n^2g\|_{\Ld^s(I_n^+)}\,\lesssim_{r,s}\,\mu_r(\rho_n)\Big(\|\nabla P_n^1g\|_{\Ld^s(\R^d)}
+\|(P_n^1g,\nabla\sigma[P_n^1g])\|_{\Ld^\frac{rs}{r-s}(\R^d)}\Big),
\end{equation*}
and the claim~\eqref{eq:second-bnd0} follows from~\eqref{eq:first-ext0}, \eqref{eq:bnd-sigm}, and~\eqref{eq:bnd-re-P2}.
\end{enumerate}

\noindent
Combining~\eqref{eq:first-bnd0} and~\eqref{eq:second-bnd0}, and choosing the constants $\{\Theta_n^j\}_{j=1}^{M_n}$ accordingly in the definition~\eqref{eq:def-Pn3} of $P_n^2g$, the claim~\eqref{eq:bnd-key-P3} follows.

\medskip
\substep{3.2} Construction of $P_ng$ away from contact points.\\
Let $1<s\le r<\infty$ be fixed, with $r\ne\frac{ds}{d-s}$ if $s<d$, and with $r<\frac{ds}{d+s-ds}$ if $s<\frac{d}{d-1}$.
Choosing a cut-off function $\zeta\in C^\infty_c(I_n^+\setminus J_n)$ with $\zeta|_{I_n\setminus J_n^+}=1$ and $\|\zeta\|_{W^{1,\infty}(I_n^+\setminus J_n)}\lesssim1$,
we consider the vector field
\[Q_ng\,:=\,\zeta(P_n^1g-P_n^2g),\]
and we note that in view of~\eqref{eq:Pn2-restr} it satisfies
\begin{equation}\label{eq:def-Qnrestr}
Q_ng|_{I_n}=(P_n^1g-P_n^2g)|_{I_n},
\end{equation}
hence in particular $\Div(Q_ng)|_{I_n}=0$.
As this yields the following relation,
\[\int_{I_n^+\setminus (I_n\cup J_n)}\Div(Q_ng)\,=\,-\int_{\partial (I_n\setminus J_n)}(Q_ng)\cdot\nu\,=\,-\int_{I_n\setminus J_n}\Div(Q_ng)\,=\,0,\]
we can appeal to the same construction based on the Bogovskii operator as in Step~1: there exists $t_n(g)\in H^1_0(I_n^+\setminus (I_n\cup J_n))^d$ such that
\[\Div(t_n(g))=\Div( Q_ng),\qquad\text{in $I_n^+\setminus (I_n\cup J_n)$,}\]
and
\begin{equation}\label{eq:bnd-wn}
\|\nabla t_n(g)\|_{\Ld^s(I_n^+\setminus (I_n\cup J_n))}\,\lesssim_s\,\|\Div( Q_ng)\|_{\Ld^s(I_n^+\setminus J_n)}.
\end{equation}
Here comes the restriction to $d>2$ as the set $I_n^+\setminus(I_n\cup J_n)$ is typically not connected in dimension $d=2$; see Remark~\ref{rem:2Dcase} below.
In these terms, we finally define
\[P_ng\,:=\,P_n^2g+Q_ng-t_n(g)~~\in~H^1_0(I_n^+),\]
which satisfies, in view of~\eqref{eq:def-Qnrestr},
\[P_ng|_{I_n}=P_n^2g|_{I_n}+(P_n^1g-P_n^2g)|_{I_n}=P_n^1g|_{I_n},\]
and also $\Div(P_ng)=0$ by definition of $t_n(g)$.
In addition, combining~\eqref{eq:bnd-wn} with the definition of $Q_ng$, we find
\begin{equation*}
\|\nabla P_ng\|_{\Ld^s(I_n^+)}\,\lesssim_s\,\|\nabla P_n^2g\|_{\Ld^s(I_n^+)}+\|\nabla Q_ng\|_{\Ld^s(I_n^+)}
\,\lesssim\,\|(P_n^1g,P_n^2g)\|_{W^{1,s}(I_n^+)},
\end{equation*}
hence, using Poincaré's inequality and inserting~\eqref{eq:first-ext0} and~\eqref{eq:bnd-key-P3}, for all \mbox{$p\ge s\vee \frac{drs}{d(r-s)+rs}$}, with $p>d$ if $r=s$,
\begin{equation*}
\|\nabla P_ng\|_{\Ld^s(I_n^+)}\,\lesssim_{p,r,s}\,\mu_r(\rho_n)\,\|\!\D(g)\|_{\Ld^p(I_n)}.
\end{equation*}
This concludes the proof.
\qed

\begin{rem}[2D case]\label{rem:2Dcase}
The restriction to $d>2$ is due to the impossibility to fix a stream function $\sigma[P_n^1g]$ that would vanish at all quasi-contact points. More precisely, in Case~2 of the above proof, we adapt the stream function $\sigma[P_n^1g]$ locally by making it vanish at each quasi-contact point (cf.~choice of $\Theta_n^j$ in~\eqref{eq:def-Pn3}), and modifications are then glued together in $I_n^+\setminus J_n$ while the field must remain divergence-free and keep the same symmetric gradient in $I_n$. In 2D this is not possible since $I^+_n\setminus (I_n \cup J_n)$ is not connected whenever~$I_n$ has multiple quasi-contact points.
Due to this geometric rigidity in 2D, the above proof is no longer valid: we must abandon the cancellation of the stream function at quasi-contact points and rather consider the extension operator
\[\widetilde Pg:=\Div(w_n\sigma[P_n^1g]).\]
The bound~\eqref{eq:prebnd-Png00} then becomes, for all $r\ge s\ge1$, with $r<\frac{2s}{2-s}$ if $s<2$,
\begin{eqnarray*}
\lefteqn{\|\nabla\widetilde Pg\|_{\Ld^s(I_n^+)}}\\
&\lesssim&\|\nabla P_n^1g\|_{\Ld^s(I_n^+)}+\|\nabla w_n\|_{W^{1,r}(I_n^+)}\Big(\|P_n^1g\|_{\Ld^{\frac{rs}{r-s}}(\R^2)}+\|\sigma[P_n^1g]\|_{W^{1,\frac{rs}{r-s}}(\R^2)}\Big)\\
&\lesssim_{r,s}&\|\nabla P_n^1g\|_{\Ld^s(I_n^+)}+\|\nabla w_n\|_{W^{1,r}(I_n^+)}\|P_n^1g\|_{\Ld^{\frac{rs}{r-s}}(I_n+ B)}
\end{eqnarray*}
where we used the Sobolev embedding, the bound~\eqref{eq:bnd-re-P200} on $\sigma[P_n^1g]$, and Jensen's inequality. Combining this with~\eqref{eq:nabla2wn}, \eqref{eq:first-ext0}, and~\eqref{eq:bnd-re-P2}, we deduce for all $r\ge s\ge1$, with $r<\frac{2s}{2-s}$ if~$s<2$, and for all $p\ge s\vee\frac{2rs}{2(r-s)+rs}$, with $p>2$ if $r=s$,
\begin{equation*}
\|\nabla\widetilde Pg\|_{\Ld^s(I_n^+)}
\,\lesssim_{p,r,s}\,\rho_n^{\frac{3}{2r}-2}\|\D(g)\|_{\Ld^p(I_n)}.
\end{equation*}
Replacing Proposition~\ref{prop:ext-key} by this extension result would lead to corresponding 2D versions of our main results; we skip the detail for shortness.
\end{rem}

\subsection{Proof of Lemma~\ref{lem:Bog2}}
Starting point is the following standard construction based on the Bogovskii operator, e.g.~\cite[Theorem~III.3.1]{Galdi}: given a domain $D$ as in the statement, and given $h\in C_b(D)$ with $\int_{D\setminus\Ic}h=0$, there exists $z^0\in H^1_0(D)^d$ such that
\[\Div(z^0)=h\mathds1_{D\setminus\Ic},\quad\text{in $D$},\]
and for all $1<s<\infty$,
\begin{equation}\label{eq:pre-bnd-Bogo}
\|\nabla z^0\|_{\Ld^s(D)}\,\lesssim_{s}\,K(D)^{d+1}\|h\|_{\Ld^s(D\setminus\Ic)}.
\end{equation}
Next, given $1<s\le r<\infty$, with $r\ne\frac{ds}{d-s}$ if~$s<d$, and with $r<\frac{ds}{d+s-ds}$ if $s<\frac{d}{d-1}$, 
we appeal to the extension operator $P_n$ that we have constructed in Proposition~\ref{prop:ext-key}, and we define
\[\textstyle z\,:=\,z^0-\sum_nP_n(z^0|_{I_n}).\]
By the properties of $P_n$, we find
\[\D(z)|_{\Ic}=0,\qquad\text{and}\qquad\Div(z)=\Div(z^0)=h\mathds1_{D\setminus h},\quad\text{in $D$},\]
and for all $p\ge s\vee\frac{drs}{d(r-s)+rs}$, with $p>d$ if $r=s$,
\begin{eqnarray*}
\|\nabla z\|_{\Ld^s(D)}^s&\lesssim_s&\|\nabla z^0\|_{\Ld^s(D)}^s+\sum_{n:I_n\cap D\ne\varnothing}\|\nabla P_n(z^0|_{I_n})\|_{\Ld^s(I_n^+)}^s\\
&\lesssim_{p,r,s}&\|\nabla z^0\|_{\Ld^s(D)}^s+\sum_{n:I_n\cap D\ne\varnothing}\mu_{r}(\rho_n)^s\|\!\D(z^0)\|_{\Ld^p(I_n)}^s\\
&\lesssim&\Big(|D|+\sum_{n:I_n\cap D\ne\varnothing}\mu_{r}(\rho_n)^\frac{ps}{p-s}\Big)^{1-\frac sp}\|\nabla z^0\|_{\Ld^p(D)}^s.
\end{eqnarray*}
where the last bound follows from Hölder's inequality. Combined with~\eqref{eq:pre-bnd-Bogo}, this yields the conclusion.
\qed

\subsection{Proof of Theorem~\ref{th:extension}}
We split the proof into three steps.

\medskip
\step1 Given $q,S,f$ as in~\eqref{eq:cond-ext}, and given $1<\beta<\infty$ and $\alpha,r$ as in~\eqref{eq:ch-param},
we show that for all~$n$ there exists $z_n\in W^{1,\alpha}(I_n)^{d}$ such that
\begin{equation}\label{eq:constr-rn}
2\int_{I_n}\D(g):\D( z_n)=\int_{I_n^+}g\cdot f-2\int_{I_n^+\setminus I_n}\D(g):q,\quad\forall g\in C^1_c(I_n^+)^d:\Div(g)=0,
\end{equation}
and
\begin{equation}\label{eq:estim-rn}
\|\!\D( z_n)\|_{\Ld^{\alpha}(I_n)}\,\lesssim_{\alpha,\beta,r}\,\mu_{r}(\rho_n)\,\Big(\|f\|_{W^{-1,\beta}(I_n^+)}+\|q\|_{\Ld^\beta(I_n^+\setminus I_n)}\Big).
\end{equation}
While the left-hand side in~\eqref{eq:constr-rn} only involves the restriction $g|_{I_n}\in C^1_b(I_n)^d$ of the test function $g$, the right-hand side involves its extension $g\in C^1_c(I_n^+)^d$. In view of the condition~\eqref{eq:cond-ext}, the choice of the extension does not matter.
Given $1<s\le r<\infty$, with $r\ne\frac{ds}{d-s}$ if~$s<d$, and with $r<\frac{ds}{d+s-ds}$ if $s<\frac{d}{d-1}$,
we recall the extension operator $P_n$ that we have constructed in Proposition~\ref{prop:ext-key}, and the problem~\eqref{eq:constr-rn} then reads
\begin{equation}\label{eq:constr-rn+}
2\int_{I_n}\D(g):\D( z_n)\,=\,\calF_n(g),\qquad
\forall g\in C^1_b(I_n)^d:\Div(g)=0,
\end{equation}
where we have set for abbreviation
\[\calF_n(g)\,:=\,\int_{I_n^+}(P_ng)\cdot f-2\int_{I_n^+\setminus I_n}\D(P_ng):q.\]
By Proposition~\ref{prop:ext-key}, we find for all $p\ge s\vee\frac{drs}{d(r-s)+rs}$, with $p>d$ if $r=s$,
\begin{eqnarray*}
|\calF_n(g)|&\lesssim&\Big(\|f\|_{W^{-1,s'}(I_n^+)}+\|q\|_{\Ld^{s'}(I_n^+\setminus I_n)}\Big)\|\nabla P_ng\|_{\Ld^s(I_n^+)}\nonumber\\
&\lesssim_{p,r,s}&\mu_{r}(\rho_n)\,\Big(\|f\|_{W^{-1,s'}(I_n^+)}+\|q\|_{\Ld^{s'}(I_n^+\setminus I_n)}\Big)\|\!\D(g)\|_{\Ld^p(I_n)}.
\end{eqnarray*}
Appealing to the $\Ld^{p'}$ theory for the Stokes equation, e.g.~\cite[Section~IV.6]{Galdi}, we deduce that there exists a solution $ z_n\in W^{1,p'}(I_n)^d$ of the problem~\eqref{eq:constr-rn+} (unique up to a rigid motion), and that it satisfies
\begin{equation*}
\|\!\D( z_n)\|_{\Ld^{p'}(I_n)}\,\lesssim_{p,r,s}\,\mu_{r}(\rho_n)\Big(\|f\|_{W^{-1,s'}(I_n^+)}+\|q\|_{\Ld^{s'}(I_n^+\setminus I_n)}\Big).
\end{equation*}
Setting $\alpha:=p'$ and $\beta:=s'$, this yields the claim~\eqref{eq:constr-rn}--\eqref{eq:estim-rn}.

\medskip
\step2 Construction of extended flux.\\
Given $1<\beta<\infty$ and $\alpha,r$ as in~\eqref{eq:ch-param},
define
\[\textstyle\tilde q\,:=\,q\mathds1_{\R^d\setminus\Ic}+\sum_{n}\D(z_n)\mathds1_{I_n},\]
with $z_n$ as constructed in Step~1.
Given $g\in C^1_c(\R^d)^d$ with $\Div(g)=0$, we may decompose
\[\textstyle g=g_\circ+\sum_{n}g_n,\qquad g_\circ:=g-\sum_{n}P_ng,\qquad g_n:=P_ng.\]
Using~\eqref{eq:cond-ext} with test function $g_\circ$, and using~\eqref{eq:constr-rn+} with test function $g_n$, we are led to the following integral identity,
\begin{equation}\label{eq:cond-ext-todo1pr}
2\int_{\R^d}\D(g):\tilde q=\int_{\R^d}g\cdot f,\qquad\text{$\forall\,g\in C^1_c(\R^d)^d$: $\Div (g)=0$.}
\end{equation}
Next, we prove the bound~\eqref{eq:ext-est-res} for $\tilde q$.
Given a bounded domain $D\subset\R^d$,
summing~\eqref{eq:estim-rn} over all particles, appealing to Hölder's inequality, and using the Sobolev embedding $\Ld^{d\beta/(d+\beta)}\hookrightarrow W^{-1,\beta}$, we find
\begin{eqnarray}
\lefteqn{\|\tilde q\|_{\Ld^\alpha(D)}^\alpha~\le~\|q\|_{\Ld^\alpha(D\setminus\Ic)}^\alpha+\sum_{n:I_n\cap D\ne\varnothing}\|\!\D(z_n)\|_{\Ld^\alpha(I_n)}^\alpha}\nonumber\\
&\lesssim_{\alpha,\beta,r}&\|q\|_{\Ld^\alpha(D\setminus\Ic)}^\alpha+\sum_{n:I_n\cap D\ne\varnothing}\mu_{r}(\rho_n)^\alpha\Big(\|f\|_{W^{-1,\beta}(I_n^+)}^\beta+\|q\|_{\Ld^\beta(I_n^+\setminus I_n)}^\beta\Big)^\frac\alpha\beta\nonumber\\
&\lesssim&\Big(|D|+\sum_{n:I_n\cap D\ne\varnothing}\mu_{r}(\rho_n)^\frac{\beta\alpha}{\beta-\alpha}\Big)^{1-\frac\alpha\beta}\Big(\|f\|_{\Ld^\frac{d\beta}{d+\beta}(\widehat D)}^\alpha+\|q\|_{\Ld^\beta(\widehat D\setminus\Ic)}^\alpha\Big),\label{eq:ext-est-res0}
\end{eqnarray}
where we recall the notation $\widehat D=D\cup\bigcup_{n:I_n\cap D\ne\varnothing}I_n^+$.

\medskip
\step3 Construction of extended pressure.\\
In view of e.g.~\cite[Proposition~12.10]{JKO94}, the relation~\eqref{eq:cond-ext-todo1pr} for the extension $\tilde q$ ensures the existence of an associated pressure field $\tilde S\in\Ld^1_\loc(\R^d)$, uniquely defined up to a global additive constant, such that
\begin{equation}\label{eq:tildeqE-pres0}
\int_{\R^d}\D(g):\big(2\tilde q-\tilde S\Id\big)=\int_{\R^d}g\cdot f,\qquad\forall g\in C^1_c(\R^d)^d,
\end{equation}
that is, $-\Div(2\tilde q-\tilde S\Id)=f$ in $\R^d$.
It remains to prove the bound~\eqref{eq:ext-est-res} for $\tilde S$.
For all $R\ge1$, by a standard use of the Bogovskii operator, e.g.~\cite[Theorem~III.3.1]{Galdi}, we can construct {$z_R\in W^{1,\alpha'}_0(B_R)^d$} such that
\[\Div(z_R)=\big(T_R|T_R|^{\alpha-2}-\textstyle\fint_{B_R}T_R|T_R|^{\alpha-2}\big)\mathds1_{B_R},\qquad T_R:=\tilde S-\fint_{B_R}\tilde S,\]
and
\[\|\nabla z_R\|_{\Ld^{\alpha'}(B_R)}\,\lesssim_\alpha\,\|T_R|T_R|^{\alpha-2}\|_{\Ld^{\alpha'}(B_R)}\,\lesssim\,\|\tilde S-\textstyle\fint_{B_R}\tilde S\|_{\Ld^{\alpha}(B_R)}^{\alpha-1}.\]
Testing~\eqref{eq:tildeqE-pres0} with $g=z_R$, we find
\[\int_{\R^d}\tilde S\,\Div(z_R)=2\int_{\R^d}\D(z_R):\tilde q-\int_{\R^d}z_R\cdot f,\]
and thus, using the properties of $z_R$,
\[\|\tilde S-\textstyle\fint_{B_R}\tilde S\|_{\Ld^{\alpha}(B_R)}
\,\lesssim\,\|f\|_{W^{-1,\alpha}(B_R)}+\|\tilde q\|_{\Ld^\alpha(B_R)}.\]
Combined with~\eqref{eq:ext-est-res0}, this yields the conclusion~\eqref{eq:ext-est-res}.\qed

\section{Homogenization}
This section is devoted to the proof of Theorems~\ref{prop:cor} and~\ref{th:main-qsh}.
While Tartar's oscillating test function method as used in~\cite{DG-19} is not quite appropriate to the present setting without uniform separation, we provide an alternative argument based on div-curl ideas combined with the extension result in Theorem~\ref{th:extension}, as inspired by the work of Jikov~\cite{Jikov-87,Jikov-90} on homogenization problems with stiff inclusions (see also~\cite[Section~3.2]{JKO94}).

\subsection{Construction of correctors}
We start with the proof of Theorem~\ref{prop:cor},
which we shall deduce from our results in~\cite{DG-19} for uniformly separated particles, via an approximation argument together with suitable a priori estimates. The improved pressure estimates in Proposition~\ref{prop:Sig-int} are deduced simultaneously.

\begin{proof}[Proof of Theorem~\ref{prop:cor} and Proposition~\ref{prop:Sig-int}]
Given $2\le r\ne\frac{2d}{d-2}$ and $1\le\alpha\le2\wedge\frac{2dr}{r(d-2)+2d}$, with $\alpha<\frac{d}{d-1}$ if $r=2$, we assume that interparticle distances satisfy
\begin{equation}\label{eq:mom-1}
\textstyle\sum_n\expecm{\mu_{r}(\rho_n)^\frac{2\alpha}{2-\alpha}\mathds1_{0\in I_n}}\,<\,\infty,
\end{equation}
and we shall then prove Theorem~\ref{prop:cor}, with pressure $\Sigma_E$ in $\Ld^\alpha(\Omega)$ provided $\alpha>1$. Optimizing in $r$ further yields Proposition~\ref{prop:Sig-int}.
We split the proof into two main steps.

\medskip
\step1 Approximations with uniformly separated particles.\\
For $0<\kappa\le\frac\delta2$, we consider the restricted inclusions
\[I_n^\kappa:=\{x\in I_n:\dist(x,\partial I_n)>\kappa\},\qquad\Ic^\kappa:=\bigcup_nI_n^\kappa,\]
which still satisfy Assumptions~\ref{Hd} and~\ref{Hd'} with $\delta$ replaced by $\frac\delta2$ and with minimal interparticle distance~$\rho_n^\circ\ge\kappa$, cf.~\eqref{eq:def-rn0}, that is, $(I_n^\kappa+\kappa B)\cap(I_m^\kappa+\kappa B)=\varnothing$ for all $n\ne m$. In this context with uniformly separated particles, we may apply~\cite[Proposition~2.1]{DG-19}, which ensures the existence and uniqueness of a corrector~$\psi_E^\kappa$ and of an associated pressure~$\Sigma_E^\kappa$ that satisfy the different properties stated in Theorem~\ref{prop:cor} with $\Ic$ replaced by $\Ic^\kappa$.
In addition, we show that the following moment bounds hold uniformly with respect to the parameter~$\kappa>0$: for $\alpha$ as in the moment condition~\eqref{eq:mom-1},
\begin{eqnarray}
\expecm{|\nabla\psi_E^\kappa|^2}&\lesssim&|E|^2,\label{eq:bnd-unif-eta-1}\\
\expecm{|\Sigma_E^\kappa|^\alpha\mathds1_{\R^d\setminus\Ic^\kappa}}&\lesssim&|E|^\alpha.\label{eq:bnd-unif-eta-2}
\end{eqnarray}
These two estimates are established in the following two substeps.

\medskip
\substep{1.1} Proof of~\eqref{eq:bnd-unif-eta-1}.\\
In terms of the extension operator $P_n$ that we have constructed in Proposition~\ref{prop:ext-key}, we consider the following stationary random vector field,
\[\textstyle\phi_E^\circ\,:=\,-\sum_nP_n(E(x-x_n)),\]
and we show that it satisfies
\begin{equation}\label{eq:prop-psi0}
(\D(\phi^\circ_E)+E)|_{\Ic}=0,\quad~\Div(\phi^\circ_E)=0,\quad~\expec{|\!\D(\phi_E^\circ)|^2}\lesssim|E|^2,\quad~\expec{\D(\phi_E^\circ)}=0.
\end{equation}
The first two properties follow from the construction of $P_n$ with $\D(P_n(E(x-x_n)))|_{I_n}=E$ and $\Div(P_n(E(x-x_n)))=0$.
Next, stationarity allows to write
\[\expec{|\!\D(\phi_E^\circ)|^2}\,=\,\expecM{\fint_{B}|\!\D(\phi_E^\circ)|^2}\,=\,\expecM{\tfrac1{|B|}\sum_{n}\int_{I_n^+\cap B}|\!\D(P_n(E(x-x_n)))|^2},\]
hence, by Proposition~\ref{prop:ext-key},
\[\expec{|\!\D(\phi_E^\circ)|^2}\,\lesssim\,|E|^2\,\expecM{\tfrac1{|B|}\sum_{n:I_n^+\cap B\ne\varnothing}\mu_2(\rho_n)^2}.\]
This can then be estimated as follows, by stationarity,
\[\expec{|\!\D(\phi_E^\circ)|^2}\,\lesssim\,|E|^2\,\expecM{\tfrac1{|B|}\int_{B_{3}}\sum_{n}\mu_2(\rho_n)^2\mathds1_{I_n}}\,\lesssim\,|E|^2\,\expecM{\sum_{n}\mu_2(\rho_n)^2\mathds1_{0\in I_n}},\]
so that the third property in~\eqref{eq:prop-psi0} follows from the moment assumption~\eqref{eq:mom-1} with $r=2$ and $\alpha=1$.
Finally, stationarity allows to write for all $R>0$,
\[\expec{\D(\phi_E^\circ)}\,=\,\expecM{\fint_{B_R}\D(\phi_E^\circ)},\]
hence, inserting the definition of $\D(\phi_E^\circ)\in\Ld^2(\Omega)$, and letting $R\uparrow\infty$ to neglect boundary terms,
\[\expec{\D(\phi_E^\circ)}\,=\,-\lim_{R\uparrow\infty}\expecM{|B_R|^{-1}\sum_{n:I_n^+\subset B_R}\int_{I_n^+}\D(P_n(E(x-x_n)))}.\]
Combined with the observation that $\int_{I_n^+}\D(P_n(E(x-x_n)))=0$, this concludes the proof of the last property in~\eqref{eq:prop-psi0}.

\medskip\noindent
With this construction at hand, and noting that $\Ic^\kappa\subset\Ic$,
testing the variational problem~\eqref{eq:var-psi} for $\psi_E^\kappa$ with the test function $\phi_E^\circ$ yields
\begin{equation}\label{eq:bnd-Dpsieta}
\expec{|\!\D(\psi_E^\kappa)+E|^2}\,\le\,\expec{|\!\D(\phi_E^\circ)+E|^2}\,\lesssim\,|E|^2.
\end{equation}
It remains to turn this into an a priori estimate on the full gradient $\nabla\psi_E^\kappa$. For that purpose, we decompose
\begin{equation}\label{eq:decomp-Dnabpsi}
|\nabla\psi_E^\kappa|^2=2|\!\D(\psi_E^\kappa)|^2-\nabla_j(\psi_E^\kappa)_i\nabla_i(\psi_E^\kappa)_j.
\end{equation}
For all $R\ge1$, choose a smooth averaging function $\chi_R\in C^\infty_c(\R^d;\R^+)$ such that $\chi_R$ is constant in $B_R$, vanishes outside $B_{2R}$, and satisfies $\int_{\R^d}\chi_R=1$ and $|\nabla\chi_R|\lesssim R^{-d-1}$.
An integration by parts together with the constraint $\Div(\psi_E^\kappa)=0$ yields
\[\int_{\R^d}\chi_R\nabla_j(\psi_E^\kappa)_i\nabla_i(\psi_E^\kappa)_j\,=\,-\int_{\R^d}(\nabla\chi_R\otimes \psi_E^\kappa) :\nabla\psi_E^\kappa,\]
and thus, by definition of $\chi_R$ and by scaling,
\[\Big|\int_{\R^d}\chi_R\nabla_j(\psi_E^\kappa)_i\nabla_i(\psi_E^\kappa)_j\Big|\,\lesssim\,\|R^{-1}\psi_E^\kappa(R\cdot)\|_{\Ld^2(B_2)}\|\nabla\psi_E^\kappa(R\cdot)\|_{\Ld^2(B_2)}.\]
Passing to the limit $R\uparrow\infty$ and appealing to the ergodic theorem, in view of the stationarity of $\nabla\psi_E^\kappa$ and the sublinearity of $\psi_E^\kappa$, cf.~\eqref{eq:conv-cor}, we deduce $\expec{\nabla_j(\psi_E^\kappa)_i\nabla_i(\psi_E^\kappa)_j}=0$, so that the decomposition~\eqref{eq:decomp-Dnabpsi} entails $\expec{|\nabla\psi_E^\kappa|^2}=2\,\expec{|\!\D(\psi_E^\kappa)|^2}$ and the bound~\eqref{eq:bnd-Dpsieta} yields the claim~\eqref{eq:bnd-unif-eta-1}.

\medskip
\substep{1.2} Proof of~\eqref{eq:bnd-unif-eta-2}.\\
We appeal to Lemma~\ref{lem:Bog2} in the following form (with $s=2$ and $p=\alpha'$, with $\alpha,r$ as in the moment condition~\eqref{eq:mom-1}): there exists $z_R^\kappa\in H^1_0(B_R)^d$ such that $\D(z_R^\kappa)|_{\Ic^\kappa}=0$ and
\[\Div(z_R^\kappa)\,=\,\Big(T_R^\kappa|T_R^\kappa|^{\alpha-2}-\textstyle\fint_{B_R\setminus\Ic^\kappa}T_R^\kappa|T_R^\kappa|^{\alpha-2}\Big)\mathds1_{B_R\setminus\Ic^\kappa},\qquad T_R^\kappa\,:=\,\Sigma_E^\kappa-\fint_{B_R\setminus\Ic^\kappa}\Sigma_E^\kappa,\]
and such that
\begin{eqnarray*}
\|\nabla z_R^\kappa\|_{\Ld^2(B_R)}&\lesssim_{\alpha,r}&\Lambda(B_R;r,\tfrac{2\alpha}{2-\alpha})\,\|T_R^\kappa|T_R^\kappa|^{\alpha-2}\|_{\Ld^{\alpha'}(B_R\setminus\Ic^\kappa)}\\
&\lesssim&\Lambda(B_R;r,\tfrac{2\alpha}{2-\alpha})\,\|\textstyle\Sigma_E^\kappa-\fint_{B_R\setminus\Ic^\kappa}\Sigma_E^\kappa\|_{\Ld^{\alpha}(B_R\setminus\Ic^\kappa)}^{\alpha-1}.
\end{eqnarray*}
Testing the corrector equation~\eqref{eq:cor} for $(\psi_E^\kappa,\Sigma_E^\kappa)$ with this test function $z_R^\kappa$, we find
\[\int_{B_R}\Sigma_E^\kappa\,\Div(z_R^\kappa)\,=\,2\int_{B_R}\D(z_R^\kappa):\D(\psi_E^\kappa),\]
and thus, using the above properties of $z_R^\kappa$,
\begin{equation}\label{eq:pres-estimate}
\|\Sigma_E^\kappa-\textstyle\fint_{B_R\setminus\Ic^\kappa}\Sigma_E^\kappa\|_{\Ld^\alpha(B_R\setminus\Ic^\kappa)}\,\lesssim_{\alpha,r}\,\Lambda(B_R;r,\tfrac{2\alpha}{2-\alpha})\,\|\!\D(\psi_E^\kappa)\|_{\Ld^2(B_R)}.
\end{equation}
Dividing both sides by $R^{d/\alpha}$, recalling the definition~\eqref{eq:lambda}--\eqref{eq:def-mu} of $\Lambda$, passing to the limit $R\uparrow\infty$, and appealing to the ergodic theorem, recalling that $\nabla\psi_E^\kappa$ and~$\Sigma_E^\kappa$ are stationary with vanishing expectation, and using~\eqref{eq:bnd-unif-eta-1}, we deduce
\begin{equation*}
\expecm{|\Sigma_E^\kappa|^{\alpha}\mathds1_{\R^d\setminus\Ic^\kappa}}^\frac1\alpha\,\lesssim\,|E|\Big(1+\textstyle\sum_n\expecm{\mu_{r}(\rho_n)^{\frac{2\alpha}{2-\alpha}}\mathds1_{0\in I_n}}\Big)^{\frac{2-\alpha}{2\alpha}}.
\end{equation*}
and the claim~\eqref{eq:bnd-unif-eta-2} follows from the moment assumption~\eqref{eq:mom-1}.

\medskip
\step2 Conclusion.\\
In view of the uniform bounds~\eqref{eq:bnd-unif-eta-1} and~\eqref{eq:bnd-unif-eta-2}, provided $\alpha>1$, we may consider some weak limit point $(\nabla\psi_E,\Sigma_E\mathds1_{\R^d\setminus\Ic})$ of $\{(\nabla\psi_E^\kappa,\Sigma_E^\kappa\mathds1_{\R^d\setminus\Ic^\kappa})\}_{\kappa>0}$ in $\Ld^2(\Omega)^{d\times d}\times\Ld^\alpha(\Omega)$ as $\kappa\downarrow0$. It follows that $\nabla\psi_E$ is stationary with vanishing expectation and finite second moments, that it satisfies $\Div(\psi_E)=0$ and $(\D(\psi_E)+E)|_\Ic=0$,
and that $\D(\psi_E)$ is the unique solution of the limiting variational problem~\eqref{eq:var-psi}.
Moreover, passing to the limit in the weak formulation of~\eqref{eq:cor}, we find
\begin{equation}\label{eq:weak-form-cor-pres}
2\int_{\R^d}\D(g):\D(\psi_E)\,=\,\int_{\R^d}\Sigma_E\,\Div(g),\qquad\forall g\in C^1_c(\R^d)^d:\D(g)|_{\Ic}=0,
\end{equation}
hence, in particular,
\begin{equation}\label{eq:strongformstokes}
-\triangle\psi_E+\nabla\Sigma_E\,=\,0,\qquad\text{in $\R^d\setminus\Ic$}.
\end{equation}
The pressure field $\Sigma_E$ in this equation is uniquely defined up to a global constant in view of the almost sure connectedness of $\R^d\setminus\Ic$, and is thus fully determined by the condition $\expecm{\Sigma_E\mathds1_{\R^d\setminus\Ic}}=0$.
In addition, in view of the regularity of the particle boundaries, cf.~Assumption~\ref{Hd}, the regularity theory for the Stokes equation (e.g.~\cite[Section~IV]{Galdi}) entails that $(\psi_E,\Sigma_E)$ is $C^2$ smooth in $\R^d\setminus\Ic$ up to the boundary, and equation~\eqref{eq:strongformstokes} is thus satisfied in the strong sense.
Next, for all $n$, for all $V\in\R^d$ and~$\Theta\in\Md^\Skew$, in terms of the cut-off function $w_n$ that we have constructed in Lemma~\ref{lem:wn}, we may test equation~\eqref{eq:weak-form-cor-pres} with $g=w_n(V+\Theta (x-x_n))\in W^{1,\alpha'}_0(I_n^+)$, which indeed satisfies $\D(g)|_\Ic=0$, and an integration by parts then yields
\begin{eqnarray*}
0&=&\int_{\R^d}\D\big(w_n(V+\Theta (x-x_n))\big):\sigma(\psi_E+Ex,\Sigma_E)\\
&=&-\int_{\partial I_n}(V+\Theta (x-x_n))\cdot\sigma(\psi_E+Ex,\Sigma_E)\nu,
\end{eqnarray*}
showing that the boundary conditions in~\eqref{eq:cor} are almost surely satisfied in a pointwise sense.
Finally, the weak convergence of $(\nabla\psi_E,\Sigma_E\mathds1_{\R^d\setminus\Ic})(\tfrac\cdot\e)$ to $0$ in~\eqref{eq:conv-cor} follows from $\expecm{(\nabla\psi_E,\Sigma_E\mathds1_{\R^d\setminus\Ic})}=0$ by the ergodic theorem, while the sublinearity of~$\psi_E$ in form of the strong convergence of $\e\psi_E(\tfrac\cdot\e)$ to $0$ is a standard result for random fields with stationary gradient having vanishing expectation, e.g.~\cite[Section~7]{JKO94}.
\end{proof}

\subsection{Extension of fluxes}
Applying Theorem~\ref{th:extension} to the corrector $\psi_E$, cf.~\eqref{eq:cor}, and to the solution $u_\e$ of the Stokes problem~\eqref{eq:st-het},
we obtain the following useful extension result for the fluxes
\[q_E:=\D(\psi_E)+E,\qquad p_\e:=\D(u_\e).\]

\begin{cor}\label{cor:extension}
On top of Assumptions~\ref{Hd} and~\ref{Hd'}, given a bounded Lipschitz domain~$U\subset\R^d$, and given $2\le r\ne\frac{2d}{d-2}$ and $1<\alpha\le2\wedge\frac{2dr}{r(d-2)+2d}$, with $\alpha<\frac{d}{d-1}$ if $r=2$, assume that interparticle distances satisfy the following moment condition, almost surely,
\begin{equation}\label{eq:mom-1-rep}
\textstyle\limsup_{\e\downarrow0} \e^d\sum_{n\in\Nc_\e(U)}\mu_{r}(\rho_{n;U,\e})^\frac{2\alpha}{2-\alpha}\,<\,\infty.
\end{equation}
Then the following properties hold.
\begin{enumerate}[(i)]
\item For all $E\in\Md_0^\Sym$, there exist a stationary element $\tilde q_E\in\Ld^\alpha(\Omega;\Ld^\alpha_\loc(\R^d)^{d\times d}_\Sym)$ with $\Tr(\tilde q_E)=0$, and an associated stationary pressure field $\tilde\Sigma_E\in\Ld^\alpha(\Omega;\Ld^\alpha_\loc(\R^d))$, such that almost surely
\begin{gather}
\qquad(\tilde q_E,\tilde \Sigma_E)|_{\R^d\setminus\Ic}=(q_E,\Sigma_E)|_{\R^d\setminus\Ic},\nonumber\\
\qquad-\Div(2\tilde q_E-\tilde \Sigma_E\Id)=0,\qquad\text{in $\R^d$},\label{eq:psiSig-ext-eqn}
\end{gather}
and
\[\qquad\|\tilde q_E\|_{\Ld^\alpha(\Omega)}+\|\tilde \Sigma_E-\expecm{\tilde\Sigma_E}\|_{\Ld^\alpha(\Omega)}\,\lesssim_{\alpha,r}\,|E|.\]
\item There exists $\tilde p_\e\in\Ld^\alpha(\Omega;\Ld^\alpha(U)^{d\times d}_\Sym)$ with $\Tr(\tilde p_\e)=0$, and an associated pressure field $\tilde S_\e\in\Ld^\alpha(\Omega;\Ld^\alpha(U))$, such that almost surely
\begin{gather}
\qquad(\tilde p_\e,\tilde S_\e)|_{U\setminus\Ic_\e(U)}=(p_\e,S_\e)|_{U\setminus\Ic_\e(U)},\nonumber\\
\qquad-\Div(2\tilde p_\e-\tilde S_\e\Id)=f\mathds1_{U\setminus\Ic_\e(U)},\qquad\text{in $U$},\label{eq:pS-ext-eqn}
\end{gather}
and
\[\qquad\limsup_{\e\downarrow0}\big(\|\tilde p_\e\|_{\Ld^\alpha(U)}+\|\tilde S_\e-\textstyle\fint_U\tilde S_\e\|_{\Ld^\alpha(U)}\big)\,\lesssim_{U,\alpha,r}\,\|f\|_{\Ld^\frac{2d}{d+2}(U)}.\qedhere\]
\end{enumerate}
\end{cor}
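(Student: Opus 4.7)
Both claims follow from a direct application of Theorem~\ref{th:extension} with $\beta=2$, combined with an a priori $\Ld^2$ bound for the trace-free flux and with the moment condition~\eqref{eq:mom-1-rep} controlling the prefactor $\Lambda$.

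\medskip
\emph{Part~(i).} The corrector flux $q_E=\D(\psi_E)+E$ is trace-free (since $\Tr(E)=0$ and $\Div\psi_E=0$), and the weak formulation of~\eqref{eq:cor} is precisely the statement that~\eqref{eq:cond-ext} holds with $f=0$ and $q=q_E$. Theorem~\ref{th:extension} therefore produces on each ball $B_R$ an extension $\tilde q_E$ and a pressure $\tilde\Sigma_E$ satisfying~\eqref{eq:psiSig-ext-eqn}. The proof of that theorem builds $\tilde q_E$ particle by particle from the local solvers $z_n\in W^{1,\alpha}(I_n)^d$ of~\eqref{eq:constr-rn}: these are unique only up to rigid motions, but $\D(z_n)$ is uniquely determined by the restriction of $q_E$ to $I_n^+$, so $\tilde q_E$ is a translation-covariant function of the stationary input $(\Ic,q_E)$ and is therefore itself stationary. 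The pressure $\tilde\Sigma_E$ is then determined by~\eqref{eq:psiSig-ext-eqn} up to an additive constant, which I normalize by subtracting its expectation. To upgrade this to the $\Ld^\alpha(\Omega)$ bound, I divide estimate~\eqref{eq:ext-est-res} by~$R^{d/\alpha}$ and let $R\uparrow\infty$: by the spatial ergodic theorem, the left-hand side converges to $\expec{|\tilde q_E|^\alpha}^{1/\alpha}+\expec{|\tilde\Sigma_E-\expec{\tilde\Sigma_E}|^\alpha}^{1/\alpha}$, while the right-hand side stays bounded thanks to the a priori estimate $\expec{|\nabla\psi_E|^2}\lesssim|E|^2$ from Theorem~\ref{prop:cor} and to the moment assumption~\eqref{eq:mom-1-rep}, which forces $R^{-d/\alpha}\Lambda(B_R;r,\tfrac{2\alpha}{2-\alpha})=O(1)$ in the limit.

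\medskip
\emph{Part~(ii).} Testing equation~\eqref{eq:st-het} against $u_\e$ itself and combining Korn's inequality with the Sobolev embedding $H^1_0(U)\hookrightarrow\Ld^{2d/(d-2)}(U)$ yields the energy estimate
\begin{equation*}
\|\D(u_\e)\|_{\Ld^2(U)}\,\lesssim\,\|f\|_{\Ld^{2d/(d+2)}(U)}.
\end{equation*}
Reading the weak formulation of~\eqref{eq:st-het} shows that the trace-free flux $p_\e=\D(u_\e)$ satisfies~\eqref{eq:cond-ext} on $\tfrac1\e U$ with forcing $f\mathds1_{U\setminus\Ic_\e(U)}$, after the natural rescaling $y=x/\e$. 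Applying Theorem~\ref{th:extension} to this rescaled problem and scaling back produces $(\tilde p_\e,\tilde S_\e)$ satisfying~\eqref{eq:pS-ext-eqn}; the bound~\eqref{eq:ext-est-res} together with the almost sure estimate~\eqref{eq:mom-1-rep} then delivers the claimed uniform $\Ld^\alpha(U)$ control as $\e\downarrow0$.

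\medskip
\emph{Main obstacle.} The only genuinely subtle point is the stationarity of $\tilde q_E$ in part~(i): it requires checking that the particle-by-particle construction in the proof of Theorem~\ref{th:extension} commutes with spatial translations despite the non-uniqueness of $z_n$, which is handled by observing that only~$\D(z_n)$ enters the definition of $\tilde q_E$ and is uniquely determined. A minor complication in part~(ii) is that Theorem~\ref{th:extension} is stated on $\R^d$, but since each particle $I_n$ with $n\in\Nc_\e(U)$ lies at distance at least $\delta$ from $\partial\tfrac1\e U$, the local construction never interacts with the outer boundary and the argument transfers verbatim to the bounded domain $\tfrac1\e U$.
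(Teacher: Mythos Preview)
Your proposal is correct and follows essentially the same route as the paper: apply Theorem~\ref{th:extension} with $\beta=2$ to $q_E$ and to the rescaled $p_\e$, then pass to the limit $R\uparrow\infty$ via the ergodic theorem using the energy bound and the moment condition. One small point: normalizing $\tilde\Sigma_E$ by ``subtracting its expectation'' is slightly circular since stationarity must be established first; the paper instead fixes the additive constant by imposing $\tilde\Sigma_E|_{\R^d\setminus\Ic}=\Sigma_E|_{\R^d\setminus\Ic}$ (which is forced up to a constant by~\eqref{eq:psiSig-ext-eqn} restricted to $\R^d\setminus\Ic$ and comparison with~\eqref{eq:cor}), and then deduces stationarity of $\tilde\Sigma_E$ from uniqueness, since both $\tilde q_E$ and $\Sigma_E\mathds1_{\R^d\setminus\Ic}$ are already stationary.
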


\begin{proof}
We split the proof into two steps.

\medskip
\step1 Proof of~(i)\\
The corrector equation~\eqref{eq:cor} ensures that the flux $q_E=\D(\psi_E)+E\in\Ld^2(\Omega;\Ld^2_\loc(\R^d)^{d\times d}_\Sym)$ satisfies \mbox{$\Tr(q_E)=0$} and
\[\int_{\R^d}\D(g):q_E=0,\qquad\forall g\in C^1_c(\R^d)^d:\Div(g)=0,\,\D(g)|_{\Ic}=0.\]
Given $\alpha,r$ as in~\eqref{eq:mom-1-rep}, Theorem~\ref{th:extension} provides an extension $\tilde q_E\in\Ld^\alpha(\Omega;\Ld^\alpha_\loc(\R^d)^{d\times d}_\Sym)$ with $\Tr(\tilde q_E)=0$, and an associated pressure field $\tilde\Sigma_E\in\Ld^\alpha(\Omega;\Ld^\alpha_\loc(\R^d))$, such that
\begin{equation}\label{eq:qE-ext-eqn}
\tilde q_E|_{\R^d\setminus\Ic}=q_E|_{\R^d\setminus\Ic},\qquad\text{and}\qquad-\Div(2\tilde q_E-\tilde \Sigma_E\Id)=0,\quad\text{in $\R^d$},
\end{equation}
and such that the following estimate holds, for all $R\ge1$,
\begin{equation}\label{eq:ext-est-res-use}
\|\tilde q_E\|_{\Ld^\alpha(B_R)}+\|\tilde \Sigma_E-\textstyle\fint_{B_R}\tilde \Sigma_E\|_{\Ld^{\alpha}(B_R)}
\,\lesssim_{\alpha,r}\,\Lambda(B_R;r,\tfrac{2\alpha}{2-\alpha})\|q_E\|_{\Ld^2(\widehat B_R\setminus\Ic)}.
\end{equation}
In addition, the construction in the proof of Theorem~\ref{th:extension} ensures that $\tilde q_E$ can be chosen stationary.
Since $\tilde q_E$ coincides with $q_E=\D(\psi_E)+E$ on $\R^d\setminus\Ic$, we deduce from~\eqref{eq:qE-ext-eqn}, in particular,
\[-\triangle\psi_E+\nabla\tilde \Sigma_E=0,\qquad\text{in $\R^d\setminus\Ic$}.\]
In view of~\eqref{eq:cor}, recalling that $\R^d\setminus\Ic$ is almost surely connected, we deduce that the pressure $\tilde\Sigma_E$ must coincide with $\Sigma_E$ in $\R^d\setminus\Ic$ up to a global constant. Therefore, $\tilde\Sigma_E$ is uniquely determined for instance by the choice $\tilde\Sigma_E|_{\R^d\setminus\Ic}=\Sigma_E|_{\R^d\setminus\Ic}$. For this choice, as $\tilde q_E$ and $\Sigma_E\mathds1_{\R^d\setminus\Ic}$ are stationary, uniqueness entails that~$\tilde \Sigma_E$ is also stationary.

\medskip\noindent
Dividing both sides of~\eqref{eq:ext-est-res-use} by $R^{d/\alpha}$, recalling the definition~\eqref{eq:lambda} of $\Lambda$, passing to the limit $R\uparrow\infty$, appealing to the ergodic theorem, in view of the stationarity of $\tilde q_E,\tilde\Sigma_E$, and using the energy bound $\expec{|q_E|^2}\lesssim|E|^2$, we obtain
\begin{equation*}
\|\tilde q_E\|_{\Ld^\alpha(\Omega)}+\|\tilde\Sigma_E-\expecm{\tilde\Sigma_E}\|_{\Ld^{\alpha}(\Omega)}
\,\lesssim_{\alpha,r}\,|E|\Big(1+\sum_n\expecm{\mu_{r}(\rho_n)^{\frac{2\alpha}{2-\alpha}}\mathds1_{0\in I_n}}\Big)^{\frac{2-\alpha}{2\alpha}}.
\end{equation*}
Combined with the moment condition~\eqref{eq:mom-1-rep}, with $\mu_{r}(\rho_n)\le\mu_{r}(\rho_{n;U,\e})$, this yields the conclusion.

\medskip
\step2 Proof of~(ii).\\
Equation~\eqref{eq:st-het} ensures that the flux $p_\e=\D(u_\e)\in\Ld^2(\Omega;\Ld^2(U)^{d\times d}_\Sym)$ satisfies $\Tr(p_\e)=0$ and
\[2\int_{U}\D(g):p_\e=\int_{U\setminus\Ic_\e(U)}g\cdot f,\qquad\forall g\in C^1_c(U)^d:\Div(g)=0,\,\D(g)|_{\Ic_\e(U)}=0.\]
Given $\alpha,r$ as in~\eqref{eq:mom-1-rep}, Theorem~\ref{th:extension} provides an extension $\tilde p_\e\in\Ld^\alpha(\Omega;\Ld^\alpha(U)^{d\times d}_\Sym)$ with \mbox{$\Tr(\tilde p_\e)=0$}, and an associated pressure field $\tilde S_\e\in\Ld^\alpha(\Omega;\Ld^\alpha(U))$, such that
\[\tilde p_\e|_{U\setminus\Ic_\e(U)}=p_\e|_{U\setminus\Ic_\e(U)},\qquad\text{and}\qquad-\Div(2\tilde p_\e-\tilde S_\e\Id)=f\mathds1_{U\setminus\Ic_\e(U)},\quad\text{in $U$},\]
and by scaling, the estimate~\eqref{eq:ext-est-res} takes on the following guise,
\begin{multline*}
\|\tilde p_\e\|_{\Ld^\alpha(U)}+\|\tilde S_\e-\textstyle\fint_{U}\tilde S_\e\|_{\Ld^\alpha(U)}\\
\,\lesssim_{U,\alpha,r}\,\Big(|U|+\e^d\sum_{n\in\Nc_\e(U)}\mu_{r}(\rho_{n;U,\e})^{\frac{2\alpha}{2-\alpha}}\Big)^{\frac{2-\alpha}{2\alpha}}\Big(\|f\|_{\Ld^\frac{2d}{d+2}(U)}+\|p_\e\|_{\Ld^2(U\setminus\Ic_\e(U))}\Big).
\end{multline*}
Combined with the energy bound $\|p_\e\|_{\Ld^2(U\setminus\Ic_\e(U))}\lesssim\|f\|_{\Ld^{2d/(d+2)}(U)}$, and with the moment condition~\eqref{eq:mom-1-rep}, this yields the conclusion.
\end{proof}

\begin{rem}\label{rem:extension}
In view of the construction in the proof of Theorem~\ref{th:extension}, it is easily checked that the above-constructed extended fluxes $\tilde q_E$, $\tilde p_\e$ can be viewed as limiting fluxes for corresponding Stokes problems with a suspension of droplets with diverging shear viscosity.
More precisely, for all $\kappa>0$, we consider the following corrector problem
\[\left\{\begin{array}{ll}
-\Div\big(2(\mathds1_{\R^d\setminus\Ic}+\kappa\mathds1_{\Ic})(\D(\psi_E^\kappa)+E)\big)+\nabla\Sigma_E^\kappa=0,&\text{in $\R^d$,}\\
\Div(\psi_E^\kappa)=0,&\text{in $\R^d$.}
\end{array}\right.\]
Under the assumptions of Corollary~\ref{cor:extension}, in the limit $\kappa\uparrow\infty$, there holds $\D(\psi_E^\kappa)\cvf\D(\psi_E)$ in $\Ld^2(\Omega)$ and corresponding fluxes converge,
\[2(\mathds1_{\R^d\setminus \Ic}+\kappa\mathds1_{\Ic})(\D(\psi_E^\kappa)+E)-\Sigma_E^\kappa\Id~~\cvf~~\tilde q_E,\qquad\text{in $\Ld^\alpha(\Omega)$},\]
and a similar result holds for $\tilde p_\e$. We skip the detail for shortness.
\end{rem}

Next, we compute $\expecm{\tilde q_E}$ and $\expecm{\tilde\Sigma_E}$, which happen to provide alternative definitions of the effective constants $\Bb,\bb$. Note in particular that these ensemble averages do not depend on the actual choice of the extension $\tilde q_E$ in Corollary~\ref{cor:extension}(i).

\begin{lem}[Effective constants]\label{lem:Bb-ext}
On top of Assumptions~\ref{Hd} and~\ref{Hd'},
let $(\tilde q_E,\tilde\Sigma_E)$ be defined as in Corollary~\ref{cor:extension}(i) for some $\alpha>1$.
Then we have almost surely, as~$\e\downarrow0$,
\begin{equation}\label{eq:weak-tilde-qS}
\tilde q_E(\tfrac\cdot\e)\cvf\expec{\tilde q_E}=\Bb E,\qquad
\tilde \Sigma_E(\tfrac\cdot\e)\cvf\expecm{\tilde\Sigma_E}=-\bb:E,\qquad\text{weakly in $\Ld^\alpha_\loc(\R^d)$}.
\end{equation}
In addition, provided $\alpha\ge\frac{2d}{d+2}$, these convergences are almost surely strong in $H^{-1}_\loc(\R^d)$.
\end{lem}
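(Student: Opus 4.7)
The plan is to obtain the weak convergences directly from the ergodic theorem applied to the stationary $\Ld^\alpha$ fields $\tilde q_E$ and $\tilde\Sigma_E$, then to identify the two ensemble averages $\expecm{\tilde q_E}$ and $\expecm{\tilde\Sigma_E}$ with $\Bb E$ and $-\bb:E$ respectively (via a div-curl identity for the first, and a surface-to-volume computation for the second), and finally to upgrade to strong $H^{-1}_\loc$ convergence by a Sobolev--Rellich argument. The weak convergence itself is automatic: by Corollary~\ref{cor:extension}(i) the fields $\tilde q_E$ and $\tilde\Sigma_E$ are stationary in $\Ld^\alpha(\Omega)$, so the ergodic theorem directly gives $\tilde q_E(\tfrac\cdot\e)\cvf\expecm{\tilde q_E}$ and $\tilde\Sigma_E(\tfrac\cdot\e)\cvf\expecm{\tilde\Sigma_E}$ weakly in $\Ld^\alpha_\loc(\R^d)$.

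To identify $\expecm{\tilde q_E}=\Bb E$, I would pair $\tilde q_E$ against an arbitrary $E'\in\Md_0^\Sym$ and exploit the two structural facts that $\D(\psi_{E'})+E'=0$ on $\Ic$ (rigidity) and $\tilde q_E=\D(\psi_E)+E$ on $\R^d\setminus\Ic$ (extension). Polarizing~\eqref{eq:def-Bb} then gives
\[E':\Bb E\,=\,\expecm{(\D(\psi_{E'})+E'):(\D(\psi_E)+E)}\,=\,\expecm{\tilde q_E:(\D(\psi_{E'})+E')},\]
so the identification reduces to the div-curl identity $\expecm{\tilde q_E:\D(\psi_{E'})}=0$. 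I would prove the latter by integrating $\chi_R(2\tilde q_E-\tilde\Sigma_E\Id):\nabla\psi_{E'}$ against a smooth averaging $\chi_R$ as in the proof of~\eqref{eq:bnd-unif-eta-1}, integrating by parts to invoke both $-\Div(2\tilde q_E-\tilde\Sigma_E\Id)=0$ and $\Div(\psi_{E'})=0$, and sending the remaining boundary terms to zero thanks to the sublinearity of $\psi_{E'}$ from~\eqref{eq:conv-cor} and the $\Ld^\alpha$-stationarity of $(\tilde q_E,\tilde\Sigma_E)$.

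To identify $\expecm{\tilde\Sigma_E}=-\bb:E$, the key observation is that $\tilde\sigma_E:=2\tilde q_E-\tilde\Sigma_E\Id$ is a global divergence-free extension of $\sigma(\psi_E+Ex,\Sigma_E)$: in particular $\tilde\sigma_E\nu$ has no jump across $\partial I_n$, so the outer trace used in~\eqref{eq:def-bb} coincides with the inner one. Applying the divergence theorem on $I_n$ to $(x-x_n)\cdot\tilde\sigma_E$ and using $\Tr(\tilde q_E)=0$ yields
\[\int_{\partial I_n}(x-x_n)\cdot\sigma(\psi_E+Ex,\Sigma_E)\nu\,=\,\int_{I_n}\Tr(\tilde\sigma_E)\,=\,-d\int_{I_n}\tilde\Sigma_E,\]
so that~\eqref{eq:def-bb} rewrites as $\bb:E=-\expecm{\sum_n\frac{\mathds1_{I_n}}{|I_n|}\int_{I_n}\tilde\Sigma_E}$. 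A Campbell-type averaging (integrating this random density and $\expecm{\tilde\Sigma_E\mathds1_\Ic}$ over a large ball and comparing by the ergodic theorem, the difference being boundary terms of lower order) turns this into $-\expecm{\tilde\Sigma_E\mathds1_\Ic}$; combined with $\expecm{\tilde\Sigma_E\mathds1_{\R^d\setminus\Ic}}=\expecm{\Sigma_E\mathds1_{\R^d\setminus\Ic}}=0$ from Theorem~\ref{prop:cor}, this yields the claim.

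Finally, the strong $H^{-1}_\loc(\R^d)$ convergence under $\alpha\ge\frac{2d}{d+2}$ follows by combining the weak $\Ld^\alpha_\loc$ convergence already established with the continuous embedding $\Ld^\frac{2d}{d+2}\hookrightarrow H^{-1}$ on bounded domains (dual to $H^1_0\hookrightarrow\Ld^\frac{2d}{d-2}$) and Rellich--Kondrachov compactness on a slightly supercritical integrability exponent. I expect the main obstacle to be the boundary-term bookkeeping in the div-curl step above: since $\tilde q_E\in\Ld^\alpha$ with possibly $\alpha<2$ while only $\nabla\psi_{E'}\in\Ld^2$, the H\"older exponents in the boundary estimate must reconcile the sublinearity range $s<\frac{2d}{d-2}$ of~\eqref{eq:conv-cor} with the moment range $s'\le\alpha$, a balance that becomes delicate in the critical regime and may require first identifying the averages for supercritical $\alpha$ and then passing to the limit by weak compactness.
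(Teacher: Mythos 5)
Your treatment of the weak convergences and of the two identifications follows essentially the paper's own route: the ergodic theorem for the stationary $\Ld^\alpha$ fields, the polarization $E':\Bb E=\expecm{q_{E'}:q_E}$ reduced via rigidity and the extension property to the vanishing of $\lim_{R\uparrow\infty}\int\chi_R\,\D(\psi_{E'}):\tilde q_E$ (integration by parts against $2\tilde q_E-\tilde\Sigma_E\Id$ and sublinearity of $\psi_{E'}$ in $\Ld^{\alpha'}$), and the surface-to-volume computation giving $\int_{\partial I_n}(x-x_n)\cdot\sigma(\psi_E+Ex,\Sigma_E)\nu=-d\int_{I_n}\tilde\Sigma_E$. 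For the latter the paper argues with the cut-off $w_n$ of Lemma~\ref{lem:wn} on $I_n^+\setminus I_n$, which justifies your identity without invoking normal-trace continuity of the merely $\Ld^\alpha$ field $2\tilde q_E-\tilde\Sigma_E\Id$; this is a minor difference. Concerning the range $1<\alpha\le\frac{2d}{d+2}$ that you flag for the identification step: the needed approximation is not in the exponent (under the standing moment assumptions no supercritical $\alpha$ need be available for the given ensemble), but in the geometry --- the paper reduces without loss of generality to $\alpha>\frac{2d}{d+2}$ via the $\kappa$-eroded, uniformly separated inclusions already used in the proof of Theorem~\ref{prop:cor}.

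The genuine gap is in the last step. For $\alpha>\frac{2d}{d+2}$ your argument is fine: then $\Ld^\alpha(B)$ embeds compactly into $H^{-1}(B)$ by Rellich, and weak $\Ld^\alpha_\loc$ convergence upgrades to strong $H^{-1}_\loc$ convergence. But the statement includes the critical case $\alpha=\frac{2d}{d+2}$ --- precisely the case needed in the proof of Theorem~\ref{th:main-qsh} --- and there the embedding $\Ld^{\frac{2d}{d+2}}(B)\hookrightarrow H^{-1}(B)$ is continuous but not compact: a sequence bounded in $\Ld^{\frac{2d}{d+2}}$ and converging weakly to $0$ may concentrate (e.g.\ $\e^{-\frac{d+2}2}\phi(\tfrac\cdot\e)$ with $\int\phi=0$) while staying at distance $\simeq1$ in $H^{-1}$, and no ``slightly supercritical integrability exponent'' is available since the fields are only bounded in $\Ld^{\frac{2d}{d+2}}$. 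So the soft functional-analytic upgrade you propose does not apply at the endpoint. The paper closes this case with a separate two-scale argument in the spirit of~\cite[Lemma~1.15]{NSS-17}: partition $B$ into cells $Q_i$ of mesh $\eta$, split the pairing with an $H^1$ test function into cellwise averages (handled through the compact embedding of $\Ld^{\frac{2d}{d+2}}(B)$ into $W^{-\frac1s,s'}(B)$ for $s$ large) plus cellwise oscillations, the latter controlled by Poincar\'e--Sobolev together with the stationarity and the boundedness of $(\tilde q_E,\tilde\Sigma_E)$ in $\Ld^{\frac{2d}{d+2}}(\Omega)$, which yield $\sum_i\|\tilde q_E(\tfrac\cdot\e)-\Bb E\|_{\Ld^{2d/(d+2)}(Q_i)}^2\lesssim\sum_i|Q_i|^{\frac{d+2}d}\lesssim\eta^2$ in the limit $\e\downarrow0$, ruling out concentration; one then lets $\eta\downarrow0$. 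Some such use of stationarity is indispensable here: at the level of generality of your argument the critical-case claim is simply not a consequence of the weak convergence.
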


\begin{proof}
We split the proof into two steps.

\medskip
\step1 Proof of weak convergences~\eqref{eq:weak-tilde-qS}.\\
As $\tilde q_E$ and $\tilde\Sigma_E$ are stationary, the ergodic theorem implies almost surely the weak convergences $\tilde q_E(\tfrac\cdot\e)\cvf\expec{\tilde q_E}$ and $\tilde\Sigma_E(\tfrac\cdot\e)\cvf\E[\tilde\Sigma_E]$ in $\Ld^\alpha_\loc(\R^d)$, and it remains to compute these two expectations.
For that purpose, up to an approximation argument as in the proof of Theorem~\ref{prop:cor}, we may assume without loss of generality $\alpha>\frac{2d}{d+2}$.
We split the proof into two further substeps.

\medskip
\substep{1.1} Proof that $\Bb E=\expec{\tilde q_E}$.\\
For all $R\ge1$, we set $\chi_R:=R^{-d}\chi(\frac1R\cdot)$, for some smooth averaging function $\chi\in C^\infty_c(\R^d;\R^+)$ such that $\chi$ is constant in $B$, vanishes outside $B_2$, and satisfies $\int_{\R^d}\chi=1$.
Given~$E'\in\Md_0^\Sym$, as $q_E=\D(\psi_E)+E$ is stationary, the definition~\eqref{eq:def-Bb} of $\Bb$ and the ergodic theorem yield almost surely,
\begin{equation}\label{eq:rewr-Bb-1}
E':\Bb E\,=\,\expec{q_{E'}:q_E}\,=\,\lim_{R\uparrow\infty}\int_{\R^d}\chi_R\,q_{E'}:q_E.
\end{equation}
Since $q_{E'}$ vanishes in $\Ic$, cf.~\eqref{eq:cor}, and since $q_E$ coincides with its extension $\tilde q_E$ in $\R^d\setminus\Ic$, we find
\[q_{E'}:q_E\,=\,q_{E'}:\tilde q_E\,=\,E':\tilde q_E+\D(\psi_{E'}):\tilde q_E.\]
Inserting this identity into~\eqref{eq:rewr-Bb-1}, and noting that the ergodic theorem implies the almost sure convergence $\int_{\R^d}\chi_R\,\tilde q_E\to\expec{\tilde q_E}$, we find
\begin{equation}\label{eq:comput-Bb-tildeq-pr}
E':\Bb E\,=\,E':\expec{\tilde q_E}+\lim_{R\uparrow\infty}\int_{\R^d}\chi_R\,\D(\psi_{E'}):\tilde q_E.
\end{equation}
In order to prove the claim $\Bb E=\expec{\tilde q_E}$, it remains to show that the last limit vanishes,
\begin{equation}\label{eq:comput-Bb-tildeq-pr-todo}
\lim_{R\uparrow\infty}\int_{\R^d}\chi_R\,\D(\psi_{E'}):\tilde q_E\,=\,0.
\end{equation}
Integrating by parts, using the properties~\eqref{eq:psiSig-ext-eqn} of the extensions $(\tilde q_E,\tilde\Sigma_E)$, and using the constraint $\Div(\psi_{E'})=0$, we find
\begin{eqnarray*}
\int_{\R^d}\chi_R\,\D(\psi_{E'}):\tilde q_E
&=&\int_{\R^d}\D(\chi_R\psi_{E'}):\tilde q_E-\int_{\R^d}(\nabla\chi_R\otimes\psi_{E'}):\tilde q_E\\
&=&\frac12\int_{\R^d}\tilde \Sigma_E\,\Div(\chi_R\psi_{E'})-\int_{\R^d}(\nabla\chi_R\otimes\psi_{E'}):\tilde q_E\\
&=&-\frac12\int_{\R^d}(\nabla\chi_R\otimes\psi_{E'}):\big(2\tilde q_E-\tilde\Sigma_E\Id\big).
\end{eqnarray*}
The relation $\Div(\psi_{E'})=0$ entails $\int_{\R^d}\nabla\chi_R\cdot\psi_{E'}=0$, which allows to add any constant to the pressure $\tilde \Sigma_E$ in the right-hand side. In view of the properties of the averaging function~$\chi_R$, Hölder's inequality leads to
\begin{eqnarray}\label{eq:bnd-ipp-qStil}
\lefteqn{\hspace{-0.8cm}\Big|\int_{\R^d}\chi_R\,\D(\psi_{E'}):\tilde q_E\Big|
\,\lesssim\,\int_{B_{2}}|\tfrac1R\psi_{E'}(R\cdot)|\Big(|\tilde q_E(R\cdot)|+\Big|\tilde\Sigma_E(R\cdot)-\fint_{B_{2}}\tilde\Sigma_E(R\cdot)\Big|\Big)}\\
&\lesssim&\|\tfrac1R\psi_{E'}(R\cdot)\|_{\Ld^{\alpha'}(B_2)}\Big(\|\tilde q_E(R\cdot)\|_{\Ld^\alpha(B_2)}+\Big\|\tilde\Sigma_E(R\cdot)-\fint_{B_2}\tilde\Sigma_E(R\cdot)\Big\|_{\Ld^\alpha(B_2)}\Big).\nonumber
\end{eqnarray}
As the choice $\alpha>\frac{2d}{d+2}$ entails $\alpha'<\frac{2d}{d-2}$, we can use the
sublinearity of $\psi_{E'}$ in~$\Ld^{\alpha'}$, cf.~\eqref{eq:conv-cor}, together with the boundedness of $\{(\tilde q_E,\tilde\Sigma_E(R\cdot)-\fint_{B_2}\tilde\Sigma_E(R\cdot))\}_R$ in~$\Ld^\alpha(B_2)$, cf.~Corollary~\ref{cor:extension}(i),
and the claim~\eqref{eq:comput-Bb-tildeq-pr-todo} follows.

\medskip
\substep{1.2} Proof that $\bb:E=-\expecm{\tilde\Sigma_E}$.\\
In terms of the cut-off function $w_n$ that we have constructed in Lemma~\ref{lem:wn}, integrating by parts, and recalling that the corrector equation~\eqref{eq:cor} yields $\Div(\sigma(\psi_E+Ex,\Sigma_E))=0$ in $I_n^+\setminus I_n$,
the definition~\eqref{eq:def-bb} of $\bb$ becomes
\begin{eqnarray*}
\bb:E&=&\frac1d\,\expecM{\sum_n\frac{\mathds1_{I_n}}{|I_n|}\int_{\partial I_n}(x-x_n)\cdot\sigma(\psi_E+Ex,\Sigma_E)\nu}\\
&=&-\frac1d\,\expecM{\sum_n\frac{\mathds1_{I_n}}{|I_n|}\int_{I_n^+\setminus I_n}\Div\big(w_n\,\sigma(\psi_E+Ex,\Sigma_E)\,(x-x_n)\big)}\\
&=&-\frac1d\,\expecM{\sum_n\frac{\mathds1_{I_n}}{|I_n|}\int_{I_n^+\setminus I_n}\D\big((x-x_n)w_n\big):\sigma(\psi_E+Ex,\Sigma_E)}.
\end{eqnarray*}
Writing $\sigma(\psi_E+Ex,\Sigma_E)=2q_E-\Sigma_E\Id$ in $I_n^+\setminus I_n$, and using the extensions $\tilde q_E$ and $\tilde\Sigma_E$ as in~\eqref{eq:psiSig-ext-eqn}, we are led to
\begin{equation*}
\bb:E\,=\,\frac1d\,\expecM{\sum_n\frac{\mathds1_{I_n}}{|I_n|}\int_{I_n}\D\big((x-x_n)w_n\big):\big(2\tilde q_E-\tilde\Sigma_E\Id\big)}.
\end{equation*}
Since $\D((x-x_n)w_n)=\Id$ in $I_n$ and since $\Tr(\tilde q_E)=0$, we deduce
\begin{equation*}
\bb:E\,=\,-\expecM{\sum_n\frac{\mathds1_{I_n}}{|I_n|}\int_{I_n}\tilde\Sigma_E},
\end{equation*}
and the claim $\bb:E=-\expecm{\tilde\Sigma_E}$ easily follows by stationarity.

\medskip
\step2 Proof of strong convergences in $H^{-1}_\loc(\R^d)$.\\
For $\alpha>\frac{2d}{d+2}$, strong convergences in $H^{-1}_\loc(\R^d)$ follow from~\eqref{eq:weak-tilde-qS} and the compact Rellich embedding. It remains to consider the critical case $\alpha=\frac{2d}{d+2}$, for which we appeal to a two-scale argument inspired by~\cite[Lemma~1.15]{NSS-17}.
By stationarity, it suffices to prove $\tilde q_E(\tfrac\cdot\e)\to\Bb E$ and $\tilde\Sigma_E(\tfrac\cdot\e)\to-\bb:E$ strongly in $H^{-1}(B)$ almost surely as $\e\downarrow0$.
As the argument is the same for $\tilde q_E$ and for $\tilde\Sigma_E$, we may focus on the former.

\medskip\noindent
Let $h\in H^1(B)$ be momentarily fixed. Given $\eta>0$, we choose a partition $\{Q_i\}_i$ of~$B$ into Lipschitz subsets with $|Q_i|\simeq\eta^d$.
In these terms, we can decompose
\begin{multline}\label{eq:decomp-tildeq-B}
\int_B h\,\big(\tilde q_E(\tfrac\cdot\e)-\Bb E\big)\,=\,\sum_i\Big(\int_{Q_i} h\Big)\fint_{Q_i}\big(\tilde q_E(\tfrac\cdot\e)-\Bb E\big)\\
+\sum_i\int_{Q_i}\Big(h-\fint_{Q_i}h\Big)\,\big(\tilde q_E(\tfrac\cdot\e)-\Bb E\big).
\end{multline}
On the one hand, for all $s\in(1,\infty)$, noting that $\mathds1_{Q_i}$ belongs to $W^{\frac1s,s}(B)$, we can bound
\[\Big|\fint_{Q_i}\big(\tilde q_E(\tfrac\cdot\e)-\Bb E\big)\Big|\,\le\,\Big\|\tfrac{\mathds1_{Q_i}}{|Q_i|}\Big\|_{W^{\frac1s,s}(B)}\|\tilde q_E(\tfrac\cdot\e)-\Bb E\|_{W^{-\frac1s,s'}(B)},\]
and thus, further using the Sobolev embedding in form of $\|h\|_{\Ld^1(B)}\lesssim\|h\|_{H^1(B)}$,
we deduce for the first right-hand side term in~\eqref{eq:decomp-tildeq-B},
\begin{multline}\label{eq:1/tildeqB}
\Big|\sum_i\Big(\int_{Q_i} h\Big)\fint_{Q_i}\big(\tilde q_E(\tfrac\cdot\e)-\Bb E\big)\Big|\\
\,\lesssim\,\|h\|_{H^1(B)}\Big(\sup_i\Big\|\tfrac{\mathds1_{Q_i}}{|Q_i|}\Big\|_{W^{\frac1s,s}(B)}\Big)\|\tilde q_E(\tfrac\cdot\e)-\Bb E\|_{ W^{-\frac1s,s'}(B)}.
\end{multline}
On the other hand, using Hölder's inequality and the Poincaré--Sobolev embedding, the second right-hand side term in~\eqref{eq:decomp-tildeq-B} can be estimated as
\begin{eqnarray*}
\Big|\sum_i\int_{Q_i}\Big(h-\fint_{Q_i}h\Big)\,\big(\tilde q_E(\tfrac\cdot\e)-\Bb E\big)\Big|
&\le&\sum_i\Big\|h-\fint_{Q_i}h\Big\|_{\Ld^\frac{2d}{d-2}(Q_i)}\|\tilde q_E(\tfrac\cdot\e)-\Bb E\|_{\Ld^\frac{2d}{d+2}(Q_i)}\\
&\lesssim&\sum_i\|\nabla h\|_{\Ld^2(Q_i)}\|\tilde q_E(\tfrac\cdot\e)-\Bb E\|_{\Ld^\frac{2d}{d+2}(Q_i)}\\
&\le&\|\nabla h\|_{\Ld^2(B)}\Big(\sum_i\|\tilde q_E(\tfrac\cdot\e)-\Bb E\|_{\Ld^\frac{2d}{d+2}(Q_i)}^2\Big)^\frac12.
\end{eqnarray*}
Combining this with~\eqref{eq:decomp-tildeq-B} and~\eqref{eq:1/tildeqB}, and taking the supremum over test functions \mbox{$h\in H^1(B)$}, we conclude for all $s\in(1,\infty)$,
\begin{multline*}
\|\tilde q_E(\tfrac\cdot\e)-\Bb E\|_{H^{-1}(B)}\\
\,\lesssim\,\Big(\sup_i\Big\|\tfrac{\mathds1_{Q_i}}{|Q_i|}\Big\|_{W^{\frac1s,s}(B)}\Big)\|\tilde q_E(\tfrac\cdot\e)-\Bb E\|_{ W^{-\frac1s,s'}(B)}+\Big(\sum_i\|\tilde q_E(\tfrac\cdot\e)-\Bb E\|_{\Ld^\frac{2d}{d+2}(Q_i)}^2\Big)^\frac12.
\end{multline*}
We now pass to the limit $\e\downarrow0$ in this estimate.
Choosing $2\frac{d-1}{d-2}<s<\infty$, the compact Rellich embedding ensures that $\Ld^{\frac{2d}{d+2}}(B)$ is compactly embedded in $W^{-\frac1s,s'}(B)$. Therefore, in view of~\eqref{eq:weak-tilde-qS} with $\alpha=\frac{2d}{d+2}$, we deduce $\tilde q_E(\tfrac\cdot\e)\to\Bb E$ strongly in $W^{-\frac1s,s'}(B)$ almost surely as $\e\downarrow0$. Further using the stationarity and the boundedness of $\tilde q_E(\tfrac\cdot\e)-2\Bb E$ in $\Ld^{\frac{2d}{d+2}}(\Omega)$, cf.~Corollary~\ref{cor:extension}(i), we get almost surely
\begin{equation*}
\limsup_{\e\downarrow0}\|\tilde q_E(\tfrac\cdot\e)-\Bb E\|_{H^{-1}(B)}\,\lesssim\,\Big(\sum_i|Q_i|^\frac{d+2}d\Big)^\frac12.
\end{equation*}
Using that $\sum_i|Q_i|\lesssim1$ and $|Q_i|\lesssim\eta^d$, this turns into
\begin{equation*}
\limsup_{\e\downarrow0}\|\tilde q_E(\tfrac\cdot\e)-\Bb E\|_{H^{-1}(B)}\,\lesssim\,\eta.
\end{equation*}
Finally letting the mesh $\eta$ of the partition $\{Q_i\}_i$ tend to $0$, the conclusion follows.
\end{proof}

\subsection{Proof of Theorem~\ref{th:main-qsh}}
The moment condition~\eqref{eq:mom-2} amounts to the following: for some $2\le r\ne\frac{2d}{d-2}$ and $\frac{2d}{d+2}\le\alpha\le2\wedge\frac{2dr}{r(d-2)+2d}$, with $\alpha<\frac{d}{d-1}$ if $r=2$, the interparticle distances satisfy almost surely
\begin{equation}\label{eq:mom-2re}
\textstyle\limsup_{\e\downarrow0}\e^d\sum_{n\in\Nc_\e(U)}\mu_r(\rho_{n;U,\e})^\frac{2\alpha}{2-\alpha}\,<\,\infty.
\end{equation}
We split the proof into two steps. First, we establish the convergence of the velocity field by a direct div-curl argument inspired by the work of Jikov~\cite{Jikov-87,Jikov-90} on homogenization problems with stiff inclusions (see also~\cite[Section~3.2]{JKO94}), and then we turn to the convergence of the pressure.

\medskip
\step1 Div-curl argument: we prove that almost surely, as~$\e\downarrow0$,
\begin{equation}\label{eq:conv-hom-0}
\begin{array}{rlll}
u_\e&\cvf&\bar u,&\text{weakly in $H^1_0(U)$},\\
\tilde p_\e&\cvf&\Bb\D(\bar u),&\text{weakly in $\Ld^\alpha(U)$},\\
\tilde S_\e-\textstyle\fint_U\tilde S_\e&\cvf&\bar S,&\text{weakly in $\Ld^\alpha(U)$},
\end{array}
\end{equation}
where $(\bar u,\bar S)$ is the solution of the homogenized equation~\eqref{eq:st-hom}.
By a standard energy argument as e.g.\@ in~\cite[Step~8.1 of the proof of Proposition~2.1]{DG-19}, provided that $f\in\Ld^p(U)$ for some $p>d$, this weak convergence result easily implies the following corresponding corrector result, almost surely,
\begin{equation}\label{eq:corr-res}
\begin{array}{rlll}
\displaystyle p_\e-\sum_{E\in\Ec}q_E(\tfrac\cdot\e)\nabla_E\bar u&\to&0,&\quad\text{strongly in $\Ld^2(U)$},\\
\displaystyle u_\e-\bar u-\sum_{E\in\Ec}\e\psi_E(\tfrac\cdot\e)\nabla_E\bar u&\to&0,&\quad\text{strongly in $H^1_0(U)$},
\end{array}
\end{equation}
where we recall the short-hand notation $\nabla_E\bar u=E:\D(\bar u)$ and where $\Ec$ stands for an orthonormal basis of $\Md_0^\Sym$.
We omit the proof of this standard consequence~\eqref{eq:corr-res} and rather focus on the proof of~\eqref{eq:conv-hom-0}.

\medskip\noindent
For $\kappa>0$ we set for abbreviation $U^\kappa:=\{x\in U:\dist(x,\partial U)>\kappa\}$.
Since $q_E|_\Ic=0$ and $p_\e|_{\Ic_\e(U)}=0$, since $\tilde q_E$ and $q_E$ coincide on $\R^d\setminus\Ic$, since $\tilde p_\e$ and $p_\e$ coincide on $U\setminus\Ic_\e(U)$, and since the definition~\eqref{eq:def-Ieps} of $\Ic_\e(U)$ entails $\Ic_\e(U)\cap U^\kappa=(\e\Ic)\cap U^\kappa$ whenever $\e<\tfrac\kappa2$,
we deduce the following identity on $U^\kappa$ for $\e<\tfrac\kappa2$,
\begin{equation}\label{eq:start-divcurl}
\tilde q_E(\tfrac\cdot\e):p_\e\,=\,q_E(\tfrac\cdot\e):\tilde p_\e,
\end{equation}
and we aim at passing to the limit in both sides.
Since the energy bound entails that $(u_\e)_\e$ is almost surely bounded in $H^1_0(U)$, since Corollary~\ref{cor:extension}(ii) ensures that $(\tilde p_\e,\tilde S_\e)_\e$ is almost surely bounded in $\Ld^\alpha(U)$,
further recalling~\eqref{eq:conv-cor} and Lemma~\ref{lem:Bb-ext},
we find almost surely, up to extraction of a subsequence as $\e\downarrow0$,
\begin{equation}\label{eq:conv-divcurl}
\begin{array}{rlll}
q_E(\tfrac\cdot\e)&\cvf&E,&\text{weakly in $\Ld^2(U)$},\\
\tilde q_E(\tfrac\cdot\e)&\cvf&\Bb E,&\text{weakly in $\Ld^\alpha(U)$},\\
\tilde\Sigma_E(\tfrac\cdot\e)&\cvf&-\bb:E,&\text{weakly in $\Ld^\alpha(U)$},\\
p_\e&\cvf&\D(u_0),&\text{weakly in $\Ld^2(U)$},\\
\tilde p_\e&\cvf&\tilde p_0,&\text{weakly in $\Ld^\alpha(U)$},\\
\tilde S_\e&\cvf&\tilde S_0,&\text{weakly in $\Ld^\alpha(U)$},
\end{array}
\end{equation}
for some $u_0\in H^1_0(U)^d$, $\tilde p_0\in\Ld^\alpha(U)^{d\times d}_\Sym$, and $\tilde S_0\in\Ld^\alpha(U)$.
In case $\alpha>\frac{2d}{d+2}$ (hence~$\alpha'<\frac{2d}{d-2}$), further appealing to the compact Rellich embedding and to the sublinearity of $\psi_E^\kappa$, cf.~\eqref{eq:conv-cor}, we further deduce almost surely, up to extraction of a subsequence,
\begin{equation}\label{eq:conv-divcurl-Rellich}
\begin{array}{rlll}
\e\psi_E(\tfrac\cdot\e)&\to&0,&\text{strongly in $\Ld^{\alpha'}(U)$},\\
u_\e&\to&u_0,&\text{strongly in $\Ld^{\alpha'}(U)$}.
\end{array}
\end{equation}
If the inclusions $\{I_n\}_n$ were uniformly separated as assumed in~\cite{DG-19}, then we could choose \mbox{$\alpha=2$}, cf.~\eqref{eq:mom-2re}, so that a standard div-curl argument in form of e.g.~\cite[Lemma~12.12]{JKO94} would allow to use~\eqref{eq:conv-divcurl} and pass to the limit in both sides of identity~\eqref{eq:start-divcurl} (along the subsequence), to the effect of
\begin{equation}\label{eq:start-divcurl-lim}
\Bb E:\D(u_0)\,=\,E:\tilde p_0,\qquad\text{in $U$}.
\end{equation}
In the present situation, with $\alpha<2$, we need to repeat the proof of the div-curl lemma and show that this identity~\eqref{eq:start-divcurl-lim} still holds.
Once this is proven, the conclusion~\eqref{eq:conv-hom-0} easily follows: passing to the weak limit in~\eqref{eq:pS-ext-eqn} (along the subsequence) yields
\[-\Div(2\tilde p_0-\tilde S_0\Id)=(1-\lambda)f,\qquad\text{in $U$},\]
and thus, inserting~\eqref{eq:start-divcurl-lim} in form of $\tilde p_0=\Bb\D(u_0)$, we deduce that $(u_0,\tilde S_0-\fint_U\tilde S_0)$ coincides with the unique solution $(\bar u,\bar S)$ of the homogenized equation~\eqref{eq:st-hom}. With this characterization of the limit, the conclusion~\eqref{eq:conv-hom-0} now follows from~\eqref{eq:conv-divcurl}.

\medskip\noindent
It remains to prove~\eqref{eq:start-divcurl-lim}, and we split the proof in two further substeps. We start with the case $\frac{2d}{d+2}<\alpha<2$, and next we discuss the critical case~$\alpha=\frac{2d}{d+2}$.

\medskip
\substep{1.1} Proof of~\eqref{eq:start-divcurl-lim} in case $\frac{2d}{d+2}<\alpha<2$.\\
We shall pass to the limit in both sides of~\eqref{eq:start-divcurl} and we start with the analysis of the left-hand side. Given a test function $h\in C^1_c(U)$ supported in $U^\kappa$ for some fixed $\kappa>2\e$, integrating by parts, using the property~\eqref{eq:psiSig-ext-eqn} of the extension $(\tilde q_E,\tilde\Sigma_E)$, and using the constraint $\Div(u_\e)=0$, we find
\begin{eqnarray}
\int_U h\,\tilde q_E(\tfrac\cdot\e):p_\e&=&\int_Uh\,\tilde q_E(\tfrac\cdot\e):\D(u_\e)\nonumber\\
&=&\int_U\D(hu_\e):\tilde q_E(\tfrac\cdot\e)-\int_U (\nabla h\otimes u_\e):\tilde q_E(\tfrac\cdot\e)\nonumber\\
&=&\frac12\int_U\tilde \Sigma_E(\tfrac\cdot\e)\,\Div(hu_\e)-\int_U (\nabla h\otimes u_\e):\tilde q_E(\tfrac\cdot\e)\nonumber\\
&=&-\frac12\int_U (\nabla h\otimes u_\e):\big(2\tilde q_E-\tilde\Sigma_E\Id\big)(\tfrac\cdot\e).\label{eq:rewr-LHS-qp}
\end{eqnarray}
Note that the relation $\Div(u_\e)=0$ entails $\int_U\nabla h\cdot u_\e=0$, which allows to add any constant to the pressure $\tilde\Sigma_E$, for instance replacing it by $\tilde\Sigma_E-\expecm{\tilde\Sigma_E}$.
In view of~\eqref{eq:conv-divcurl} and~\eqref{eq:conv-divcurl-Rellich}, we may now pass to the limit in the above, to the effect of
\begin{equation}\label{eq:lhs-lim}
\lim_{\e\downarrow0}\int_U h\,\tilde q_E(\tfrac\cdot\e):p_\e
\,=\,-\int_U (\nabla h\otimes u_0):\Bb E
\,=\,\int_U h\,\Bb E:\D(u_0).
\end{equation}
We turn to the analysis of the right-hand side of~\eqref{eq:start-divcurl}.
Integrating by parts, using the property~\eqref{eq:pS-ext-eqn} of the extension $(\tilde p_\e,\tilde S_\e)$, and using the constraint $\Div(\psi_E)=0$, we find
\begin{eqnarray}
\int_U h\, q_E(\tfrac\cdot\e):\tilde p_\e&=&E:\int_Uh\,\tilde p_\e+\int_Uh\, \D(\psi_E)(\tfrac\cdot\e):\tilde p_\e\nonumber\\
&=&E:\int_Uh\,\tilde p_\e+\int_U \D\big(h\,\e\psi_E(\tfrac\cdot\e)\big):\tilde p_\e-\int_U \big(\nabla h\otimes\e\psi_E(\tfrac\cdot\e)\big):\tilde p_\e\nonumber\\
&=&E:\int_Uh\,\tilde p_\e+\frac12\int_{U\setminus\Ic_\e(U)}h\,\e\psi_E(\tfrac\cdot\e)\cdot f\nonumber\\
&&\qquad-\frac12\int_U \big(\nabla h\otimes\e\psi_E(\tfrac\cdot\e)\big):\big(2\tilde p_\e-\tilde S_\e\Id\big).\label{eq:rewr-RHS-qp}
\end{eqnarray}
In view of~\eqref{eq:conv-divcurl} and~\eqref{eq:conv-divcurl-Rellich}, we may now pass to the limit in the above, to the effect of
\begin{equation}\label{eq:rhs-lim}
\lim_{\e\downarrow0}\int_U h\, q_E(\tfrac\cdot\e):\tilde p_\e\,=\,E:\int_Uh\,\tilde p_0.
\end{equation}
Combining this with~\eqref{eq:start-divcurl} and~\eqref{eq:lhs-lim}, and choosing an arbitrary test function $h\in C^\infty_c(U)$, this proves the claim~\eqref{eq:start-divcurl-lim}.

\medskip
\substep{1.2} Proof of~\eqref{eq:start-divcurl-lim} in the critical case $\alpha=\frac{2d}{d+2}$.\\
It suffices to prove that~\eqref{eq:lhs-lim} and~\eqref{eq:rhs-lim} still hold in this case.
Due to the failure of the compact Rellich embedding~\eqref{eq:conv-divcurl-Rellich}, we can no longer pass to the limit directly in~\eqref{eq:rewr-LHS-qp} and~\eqref{eq:rewr-RHS-qp}, so a finer analysis is needed. We appeal again to a two-scale argument as inspired by~\cite[Lemma~1.15]{NSS-17}.

\medskip\noindent
We start with the proof of~\eqref{eq:lhs-lim}.
Given $\eta>0$, we choose a partition $\{Q_i\}_i$ of $U$ into measurable subsets with $|Q_i|\simeq\eta^d$. In these terms, we can decompose~\eqref{eq:rewr-LHS-qp} as
\begin{multline}\label{eq:decomp-qp-Rell}
\int_U h\,\tilde q_E(\tfrac\cdot\e):p_\e\,=\,-\frac12\sum_i\Big(\fint_{Q_i}u_\e\Big)\cdot\int_{Q_i}\big(2\tilde q_E-\tilde\Sigma_E\Id\big)(\tfrac\cdot\e)\,\nabla h\\
-\frac12\sum_i\int_{Q_i} \nabla h\otimes\Big( u_\e-\fint_{Q_i}u_\e\Big):\big(2\tilde q_E-\tilde\Sigma_E\Id\big)(\tfrac\cdot\e).
\end{multline}
On the one hand, using the compact Rellich embedding in form of the almost sure strong convergence $u_\e\to u_0$ in~$\Ld^1(U)$, and using Lemma~\ref{lem:Bb-ext}, we find
\begin{multline*}
\lim_{\e\downarrow0}\frac12\sum_i\Big(\fint_{Q_i}u_\e\Big)\cdot\int_{Q_i} \nabla h\cdot\big(2\tilde q_E-\tilde\Sigma_E\Id\big)(\tfrac\cdot\e)\\
\,=\,\frac12\sum_i\Big(\int_{Q_i} \nabla h\Big)\otimes\Big(\fint_{Q_i}u_0\Big):\big(2\Bb E+(\bb:E) \Id\big),
\end{multline*}
hence, letting the mesh $\eta$ of the partition $\{Q_i\}_i$ tend to $0$, using that the constraint \mbox{$\Div(u_\e)=0$} entails $\int_U\nabla h\cdot u_0=0$, and integrating by parts,
\begin{multline}\label{eq:qp-part1-spl-lim}
\lim_{\eta\downarrow0}\lim_{\e\downarrow0}\frac12\sum_i\Big(\fint_{Q_i}u_\e\Big)\cdot\int_{Q_i} \nabla h\cdot\big(2\tilde q_E-\tilde\Sigma_E\Id\big)(\tfrac\cdot\e)\\
\,=\,\frac12\Big(\int_{U} \nabla h\otimes u_0\Big):\big(2\Bb E+(\bb:E) \Id\big)
\,=\,-\int_U h\,\Bb E:\D(u_0).
\end{multline}
On the other hand, using Hölder's inequality and the Poincaré--Sobolev embedding, the second right-hand side term in~\eqref{eq:decomp-qp-Rell} can be estimated as
\begin{eqnarray*}
\lefteqn{\Big|\sum_i\int_{Q_i} \nabla h\otimes\Big( u_\e-\fint_{Q_i}u_\e\Big):\big(2\tilde q_E-\tilde\Sigma_E\Id\big)(\tfrac\cdot\e)\Big|}\\
&\le&\|\nabla h\|_{\Ld^\infty(U)}\sum_i\Big\|u_\e-\fint_{Q_i}u_\e\Big\|_{\Ld^\frac{2d}{d-2}(Q_i)}\|(\tilde q_E,\tilde\Sigma_E)(\tfrac\cdot\e)\|_{\Ld^\frac{2d}{d+2}(Q_i)}\\
&\lesssim&\|\nabla h\|_{\Ld^\infty(U)}\sum_i\|\nabla u_\e\|_{\Ld^2(Q_i)}\|(\tilde q_E,\tilde\Sigma_E)(\tfrac\cdot\e)\|_{\Ld^\frac{2d}{d+2}(Q_i)}\\
&\le&\|\nabla h\|_{\Ld^\infty(U)}\|\nabla u_\e\|_{\Ld^2(U)}\Big(\sum_i\|(\tilde q_E,\tilde\Sigma_E)(\tfrac\cdot\e)\|_{\Ld^\frac{2d}{d+2}(Q_i)}^2\Big)^\frac12,
\end{eqnarray*}
hence, passing to the limit $\e\downarrow0$, using the boundedness of $\nabla u_\e$ in $\Ld^2(U)$, and using the stationarity and the boundedness of $(\tilde q_E,\tilde \Sigma_E)$ in $\Ld^\frac{2d}{d+2}(\Omega)$, cf.~Corollary~\ref{cor:extension}(i),
\begin{multline*}
\limsup_{\e\downarrow0}\Big|\sum_i\int_{Q_i} \nabla h\otimes\Big( u_\e-\fint_{Q_i}u_\e\Big):\big(2\tilde q_E-\tilde\Sigma_E\Id\big)(\tfrac\cdot\e)\Big|\\
\,\lesssim_f\,\|\nabla h\|_{\Ld^\infty(U)}\Big(\sum_i|Q_i|^\frac{d+2}d\Big)^\frac12\,\lesssim\,\eta\|\nabla h\|_{\Ld^\infty(U)}.
\end{multline*}
Now letting the mesh $\eta$ of the partition $\{Q_i\}_i$ tend to $0$, and combining this with~\eqref{eq:decomp-qp-Rell} and~\eqref{eq:qp-part1-spl-lim}, we deduce~\eqref{eq:lhs-lim}.

\medskip\noindent
We turn to the proof of~\eqref{eq:rhs-lim}.
Given $\eta>0$, we consider as above a partition $\{Q_i\}_i$ of $U$ into measurable subsets with $|Q_i|\simeq\eta^d$.
Starting point is the Poincaré--Sobolev embedding in the form
\[\|\e\psi_{E}(\tfrac\cdot\e)\|_{\Ld^\frac{2d}{d-2}(Q_i)}\,\lesssim\,\|\nabla\psi_E(\tfrac\cdot\e)\|_{\Ld^2(Q_i)}+|Q_i|^{\frac{d-2}{2d}}\fint_{Q_i}|\e\psi_E(\tfrac\cdot\e)|.\]
By the stationarity and the boundedness of $\nabla\psi_E$ in $\Ld^2(\Omega)$, and by the sublinearity of $\psi_{E}$ in~$\Ld^1$, cf.~\eqref{eq:conv-cor}, we deduce almost surely
\[\limsup_{\e\downarrow0}\|\e\psi_{E}(\tfrac\cdot\e)\|_{\Ld^\frac{2d}{d-2}(Q_i)}\,\lesssim\,|Q_i|^{\frac12}\|\nabla\psi_E\|_{\Ld^2(\Omega)}\,\lesssim\,|Q_i|^\frac12.\]
Summing over $i$, this yields
\begin{equation*}
\limsup_{\e\downarrow0}\|\e\psi_{E}(\tfrac\cdot\e)\|_{\Ld^\frac{2d}{d-2}(U)}
\,\lesssim\,\Big(\sum_i|Q_i|^\frac{d}{d-2}\Big)^\frac{d-2}{2d}
\,\lesssim\,\eta,
\end{equation*}
and thus, letting the mesh $\eta$ of the partition $\{Q_i\}_i$ tend to $0$,
\begin{equation}
\lim_{\e\downarrow0}~\|\e\psi_{E}(\tfrac\cdot\e)\|_{\Ld^\frac{2d}{d-2}(U)}\,=\,0,
\end{equation}
which proves that $\psi_{E}$ is in fact still sublinear in $\Ld^{\alpha'}=\Ld^\frac{2d}{d-2}$.
This allows to pass to the limit in~\eqref{eq:rewr-RHS-qp}, and the claim~\eqref{eq:rhs-lim} follows.

\medskip
\step2 Convergence of the pressure.\\
While it is already shown in Step~1, cf.~\eqref{eq:conv-hom-0}, that almost surely $\tilde S_\e-\fint_U\tilde S_\e\cvf{}\bar S$ weakly in~$\Ld^\alpha(U)$, we turn to the weak convergence of the restricted pressure $S_\e\mathds1_{U\setminus\Ic_\e(U)}=\tilde S_\e\mathds1_{U\setminus\Ic_\e(U)}$,
and we establish at the same time the corrector result for the pressure, cf.~\eqref{eq:conv2sc}. For that purpose, we start by examining the two-scale expansion errors
\begin{eqnarray*}
w_\e&:=&u_\e-\bar u-\sum_{E\in\Ec}\e\psi_E(\tfrac\cdot\e)\nabla_E\bar u,\\
Q_\e&:=&S_\e\mathds1_{U\setminus\Ic_\e(U)}-\bar S-\bb:\D(\bar u)-\sum_{E\in\Ec}(\Sigma_E\mathds1_{\R^d\setminus\Ic})(\tfrac\cdot\e)\nabla_E\bar u.
\end{eqnarray*}
Without loss of generality, we may assume that $f\in W^{1,\infty}(U)^d$ and $\bar u\in W_0^{3,\infty}(U)^d$, while the general case easily follows by an approximation argument as in~\cite[Step~8.4 of the proof of Proposition~2.1]{DG-19}.

\medskip\noindent
Consider a test function $g\in C^\infty_c(U)^d$ with $\D(g)|_{\e\Ic}=0$. Inserting the above definition of $(w_\e,Q_\e)$ and reorganizing the terms, we compute
\begin{multline*}
\int_U\D(g):\big(2\D(w_\e)-Q_\e\Id\big)
\,=\,\int_U\D(g):\big(2p_\e-S_\e\Id\big)-\int_U\D(g):\big(2\Bb\D(\bar u)-\bar S\Id\big)\\
-\sum_{E\in\Ec}\int_U\D(g):\big(2q_E-\Sigma_E\mathds1_{\R^d\setminus\Ic}\Id\big)(\tfrac\cdot\e)\nabla_E\bar u
+\sum_{E\in\Ec}\int_U\D(g):\big(2\Bb E+(\bb:E)\Id\big)\nabla_E\bar u\\
-2\sum_{E\in\Ec}\int_U\D(g):\big(\nabla\nabla_E\bar u\otimes\e\psi_E(\tfrac\cdot\e)\big).
\end{multline*}
Since $\D(g)$ vanishes in $\e\Ic$, recalling that $(q_E,\Sigma_E)(\tfrac\cdot\e)$ and $(p_\e,S_\e)$ coincide with $(\tilde q_E,\tilde \Sigma_E)(\tfrac\cdot\e)$ and $(\tilde p_\e,\tilde S_\e)$ in $U\setminus\e\Ic\subset U\setminus\Ic_\e(U)$, and appealing to~\eqref{eq:psiSig-ext-eqn} and~\eqref{eq:pS-ext-eqn}, and to the homogenized equation~\eqref{eq:st-hom}, we easily find
\begin{equation}\label{eq:ident-calF}
\int_U\D(g):\big(2\D(w_\e)-Q_\e\Id\big)\,=\,\calF_\e(g),
\end{equation}
in terms of
\begin{multline}\label{eq:def-calF}
\calF_\e(g)\,:=\,-\int_Ug\cdot\big(\mathds1_{\Ic}(\tfrac\cdot\e)-\lambda\big) f-2\sum_{E\in\Ec}\int_U\D(g):\big(\nabla\nabla_E\bar u\otimes\e\psi_E(\tfrac\cdot\e)\big)\\
+\sum_{E\in\Ec}\int_U\big(\nabla\nabla_E\bar u\otimes g\big):\Big(\big(2\tilde q_E-\tilde\Sigma_E\Id\big)(\tfrac\cdot\e)
-\big(2\Bb E+(\bb:E)\Id\big)\Big).
\end{multline}
We now appeal to Lemma~\ref{lem:Bog2} in the following form: there exists $z_\e\in H^1_0(U)^d$ with $\D(z_\e)|_{\e\Ic}=0$, such that
\[\Div(z_\e)=\Big(T_\e|T_\e|^{\alpha-2}-\textstyle\fint_{U\setminus\e\Ic}T_\e|T_\e|^{\alpha-2}\Big)\mathds1_{U\setminus\e\Ic},\qquad T_\e:=Q_\e-\fint_{U\setminus\e\Ic}Q_\e,\]
and
\begin{eqnarray}
\|\nabla z_\e\|_{\Ld^2(U)}&\lesssim_{U,\alpha,r}&\Lambda_\e(U;r,\tfrac{2\alpha}{2-\alpha})\,\|T_\e|T_\e|^{\alpha-2}\|_{\Ld^{\alpha'}(U\setminus\e\Ic)}\nonumber\\
&\lesssim&\Lambda_\e(U;r,\tfrac{2\alpha}{2-\alpha})\,\|Q_\e-\textstyle\fint_{U\setminus\e\Ic}Q_\e\|_{\Ld^{\alpha}(U\setminus\e\Ic)}^{\alpha-1},\label{eq:bnd-zeps-bab}
\end{eqnarray}
where we have set
\[\Lambda_\e(U;r,p)\,:=\,\Big(|U|+\e^d\sum_{n:\e I_n\cap U\ne\varnothing}\mu_{r}(\rho_{n;U,\e})^{p}\Big)^{\frac1p}.\]
Testing~\eqref{eq:ident-calF} with $g=z_\e$, and using the properties of $z_\e$, we find
\begin{equation}\label{eq:pretest-Qepsz}
\|Q_\e-{\textstyle\fint_{U\setminus\e\Ic}Q_\e}\|_{\Ld^\alpha(U\setminus\e\Ic)}^\alpha\,=\,-\calF_\e(z_{\e})+2\int_{U}\D(z_{\e}):\D(w_\e).
\end{equation}
Noting that the definition~\eqref{eq:def-calF} of $\calF_\e$ yields
\begin{multline*}
|\calF_\e(g)|\,\lesssim\,\|g\|_{H^1(U)}\Big(\|f\|_{W^{1,\infty}(U)}+\|\nabla\bar u\|_{W^{2,\infty}(U)}\Big)\\
\times\sup_{E\in\Ec}\Big(\|\e\psi_E(\tfrac\cdot\e)\|_{\Ld^2(U)}+\|\mathds1_{\Ic}(\tfrac\cdot\e)-\lambda\|_{H^{-1}(U)}\\
+\|\tilde q_E(\tfrac\cdot\e)-\Bb E\|_{H^{-1}(U)}
+\|\tilde\Sigma_E(\tfrac\cdot\e)+\bb:E\|_{H^{-1}(U)}\Big),
\end{multline*}
inserting this into~\eqref{eq:pretest-Qepsz},
and using~\eqref{eq:bnd-zeps-bab}, we deduce
\begin{multline*}
\|Q_\e-{\textstyle\fint_{U\setminus\e\Ic}Q_\e}\|_{\Ld^\alpha(U\setminus\e\Ic)}\,\lesssim_{U,\alpha,r}\,\Lambda_\e(U;r,\tfrac{2\alpha}{2-\alpha})\,\|w_\e\|_{H^1(U)}\\
+\Lambda_\e(U;r,\tfrac{2\alpha}{2-\alpha})\,\Big(\|f\|_{W^{1,\infty}(U)}+\|\nabla\bar u\|_{W^{2,\infty}(U)}\Big)\\
\times\sup_{E\in\Ec}\Big(\|\e\psi_E(\tfrac\cdot\e)\|_{\Ld^2(U)}+\|\mathds1_{\Ic}(\tfrac\cdot\e)-\lambda\|_{H^{-1}(U)}\\
+\|\tilde q_E(\tfrac\cdot\e)-\Bb E\|_{H^{-1}(U)}
+\|\tilde\Sigma_E(\tfrac\cdot\e)+\bb:E\|_{H^{-1}(U)}\Big).
\end{multline*}
Noting that the moment condition~\eqref{eq:mom-2re} entails $\limsup_{\e\downarrow0}\Lambda_\e(U;r,\frac{2\alpha}{2-\alpha})<\infty$, and using~\eqref{eq:conv-cor}, \eqref{eq:corr-res}, and Lemma~\ref{lem:Bb-ext}, together with the ergodic theorem in form of the almost sure weak convergence $\mathds1_\Ic(\tfrac\cdot\e)\cvf\lambda$ in $\Ld^2_\loc(\R^d)$,
the above right-hand side tends to $0$ almost surely as $\e\downarrow0$.
This concludes the proof of~\eqref{eq:conv2sc}.
\qed

\section{Further technical tools}

This last section is devoted to the proof of Corollaries~\ref{lem:trace} and~\ref{lem:cacc}, which are further technical tools for the analysis of particle suspensions without uniform separation.

\begin{proof}[Proof of Corollary~\ref{lem:trace}]
Note that the Stokes equation~\eqref{eq:uS} entails $\Div(\sigma(u,S))=0$ in $\R^d\setminus\Ic$.
For all $n$, in terms of the cut-off function $w_n\in H^1_0(I_n^+)$ with \mbox{$w_n|_{I_n}=1$} that we have constructed in Lemma~\ref{lem:wn}, an integration by parts then yields
\begin{equation}\label{eq:befexttrace}
\int_{\partial I_n}g\cdot\sigma(u,S)\nu\,=\,-\int_{I_n^+\setminus I_n}\Div(w_n\sigma(u,S)g)
\,=\,-\int_{I_n^+\setminus I_n}\D(w_n g):\sigma(u,S).
\end{equation}
In order to reformulate the right-hand side, we appeal to the extension result of Theorem~\ref{th:extension}.
More precisely, given $\beta\in(1,\infty)$ and $\alpha,r$ as in~\eqref{eq:ch-param}, since the Stokes equation~\eqref{eq:uS} ensures that the flux $p=\D(u)\in\Ld^2_\loc(\R^d)^{d\times d}_\Sym$ satisfies $\Tr(p)=0$ and
\[\int_{\R^d}\D(g):p=0,\qquad\forall g\in C^1_c(\R^d)^d:\Div(g)=0,\,\D(g)|_\Ic=0,\]
Theorem~\ref{th:extension} provides an extension $\tilde p\in\Ld^\alpha(I_n^+)^{d\times d}_\Sym$ with $\Tr(\tilde p)=0$, and an associated pressure field $\tilde S\in\Ld^\alpha_\loc(\R^d)$, such that
\[(\tilde p,\tilde S)|_{\R^d\setminus\Ic}=(p,S)|_{\R^d\setminus\Ic},\qquad\text{and}\qquad\Div(2\tilde p-\tilde S\Id)=0,\quad\text{in $\R^d$},\]
and such that the following estimate holds, for all $n$,
\begin{equation*}
\|(\tilde p,\tilde S)\|_{\Ld^\alpha(I_n^+)}\,\lesssim_{\alpha,\beta,r}\,\mu_{r}(\rho_n)\,\|\!\D(u)\|_{\Ld^\beta(I_n^+\setminus I_n)}.
\end{equation*}
Writing $\sigma(u,S)=2p-S\Id$ in~\eqref{eq:befexttrace}, and using these extensions, we find
\begin{equation*}
\int_{\partial I_n}g\cdot\sigma(u,S)\nu
\,=\,-\int_{I_n^+\setminus I_n}\D(w_n g):(2p- S\Id)
\,=\,\int_{I_n}\D(w_n g):(2\tilde p-\tilde S\Id),
\end{equation*}
and we may then estimate
\begin{eqnarray*}
\Big|\int_{\partial I_n}g\cdot\sigma(u,S)\nu\Big|
&\lesssim&\|w_n\|_{W^{1,\alpha'}(I_n^+)}\|g\|_{W^{1,\infty}(I_n^+)}\|(\tilde p,\tilde S)\|_{\Ld^\alpha(I_n)}\\
&\lesssim_{\alpha,\beta,r}&\mu_{r}(\rho_n)\|w_n\|_{W^{1,\alpha'}(I_n^+)}\|g\|_{W^{1,\infty}(I_n^+)}\|\!\D(u)\|_{\Ld^\beta(I_n^+\setminus I_n)}.
\end{eqnarray*}
Combining this with the bound on norms of $w_n$ in Lemma~\ref{lem:wn}, choosing $\beta=2$, and optimizing the choice of $\alpha,r$, the conclusion follows.
\end{proof}

\begin{proof}[Proof of Corollary~\ref{lem:cacc}]
For $R\ge5$,
choose $\zeta_R\in C^\infty_c(B_{2R-4};\R^+)$ with $\zeta_R|_{B_R}=1$ and with $|\nabla\zeta_R|\lesssim R^{-1}$.
For any $V\in\R^d$ and $c\in\R$, testing the Stokes equation~\eqref{eq:uS} with $\zeta_R(u-V)$, and replacing the pressure $S$ by $S-c$, we find
\begin{multline*}
\int_{\R^d}\zeta_R|\nabla u|^2\,=\,
-\int_{\R^d}\big((u-V)\otimes\nabla\zeta_R\big):\big(\nabla u-(S-c)\Id\mathds1_{\R^d\setminus\Ic}\big)\\
-\sum_{n:I_n^+\subset B_{2R}}\int_{\partial I_n}\zeta_R\,(u-V)\cdot\sigma(u,S-c)\nu.
\end{multline*}
Since $\D(u)=0$ in $I_n$, we may write $u=V_n+\Theta_n(x-x_n)$ in $I_n$ for some $V_n\in\R^d$ and $\Theta_n\in\Md^\Skew$.
The boundary conditions for $u$ then allow to add any constant to the test function $\zeta_R$ in the last right-hand side term, and we obtain
\begin{multline*}
\int_{\R^d}\zeta_R|\nabla u|^2\,=\,
-\int_{\R^d}\big((u-V)\otimes\nabla\zeta_R\big):\big(\nabla u-(S-c)\Id\mathds1_{\R^d\setminus\Ic}\big)\\
-\sum_{n:I_n^+\subset B_{2R}}\int_{\partial I_n}\Big(\zeta_R-\fint_{I_n}\zeta_R\Big)\big(V_n-V+\Theta_n(x-x_n)\big)\cdot\sigma(u,S-c)\nu.
\end{multline*}
Hence, using the properties of $\zeta_R$, Hölder's inequality, and appealing to the trace estimate of Corollary~\ref{lem:trace} to bound the last right-hand side term,
we deduce for all $s\ge1$,
\begin{multline*}
\|\nabla u\|_{\Ld^2(B_R)}^2
\,\lesssim\,
R^{-1}\|u-V\|_{\Ld^{s}(B_{2R})}\Big(\|\nabla u\|_{\Ld^{s'}(B_{2R})}+\|S-c\|_{\Ld^{s'}(B_{2R}\setminus\Ic)}\Big)\\
+R^{-1}\Big(\sum_{n:I_n^+\subset B_{2R}}\mu'(\rho_n)^2(|V_n-V|^2+|\Theta_n|^2)\Big)^\frac12\|\nabla u\|_{\Ld^2(B_{2R})},
\end{multline*}
where we have set for abbreviation,
\[\mu'(\rho_n):=\left\{\begin{array}{lll}
\rho_n^{\frac{1}{4d}(d+1)(d+2)-\frac52}&:&d\le6,\\
1&:&d>6.
\end{array}\right.\]
Choosing $c=\fint_{B_{2R}\setminus\Ic}S$ and appealing to a pressure estimate as in~\eqref{eq:pres-estimate} (with $\alpha=s'$), this becomes for all $2\le r\ne\frac{2d}{d-2}$ and $2\vee\frac{2dr}{d(r-2)+2r}\le s<\infty$, with $s>d$ if $r=2$,
\begin{multline*}
\|\nabla u\|_{\Ld^2(B_R)}^2
\,\lesssim\,
R^{-1}\Big(|B_R|+\sum_{n:I_n^+\subset B_{2R}}\mu_r(\rho_n)^\frac{2s}{s-2}\Big)^\frac{s-2}{2s}\|u-V\|_{\Ld^{s}(B_{2R})}\|\nabla u\|_{\Ld^2(B_{2R})}\\
+R^{-1}\Big(\sum_{n:I_n^+\subset B_{2R}}\mu'(\rho_n)^2(|V_n-V|^2+|\Theta_n|^2)\Big)^\frac12\|\nabla u\|_{\Ld^2(B_{2R})}.
\end{multline*}
Noting that
\[|V_n-V|^2\lesssim\int_{I_n}|u-V|^2,\qquad|\Theta_n|^2\lesssim\int_{I_n}|\nabla u|^2,\]
Hölder's inequality yields
\begin{eqnarray*}
\lefteqn{\sum_{n:I_n^+\subset B_{2R}}\mu'(\rho_n)^2(|V_n-V|^2+|\Theta_n|^2)}\\
&\lesssim&\Big(\sum_{n:I_n^+\subset B_{2R}}\mu'(\rho_n)^\frac{2s}{s-2}\Big)^\frac{s-2}s\|u-V\|_{\Ld^s(B_{2R})}^2+\Big(\sup_{n:I_n^+\subset B_{2R}}\mu'(\rho_n)^2\Big)\|\nabla u\|_{\Ld^2(B_{2R})}^2\\
&\lesssim&\Big(\sum_{n:I_n^+\subset B_{2R}}\mu'(\rho_n)^\frac{2s}{s-2}\Big)^\frac{s-2}s\Big(\|u-V\|_{\Ld^s(B_{2R})}^2+\|\nabla u\|_{\Ld^2(B_{2R})}^2\Big).
\end{eqnarray*}
Inserting this into the above, choosing $V:=\fint_{B_{2R}}u$, and optimizing in $r$, the conclusion follows.
\end{proof}

\section*{Acknowledgements}
We thank David Gérard-Varet for pointing out a mistake in a previous version of this work and for explaining some related computations in~\cite{GVH-12}.
We also thank Antoine Gloria for motivating discussions on the topic, we thank Roxane Verdikt for drawing figures, and we acknowledge financial support from the CNRS-Momentum program.

\bibliographystyle{plain}
\def\cprime{$'$} \def\cprime{$'$} \def\cprime{$'$}

\end{document}